\renewcommand{\SS}{\mathbb{S}^1}
\renewcommand{\L}{\mathcal{L}}
\newtheorem{theorem}{Theorem}[section]
\newtheorem{lemma}[theorem]{Lemma}
\newtheorem{proposition}[theorem]{Proposition}
\newtheorem{corollary}[theorem]{Corollary}
\newtheorem{example}[theorem]{Example}
\newtheorem{remark}[theorem]{Remark}
\DeclareMathOperator{\diam}{diam}
\newcommand{\spb}[1]{\smallskip}
\newcommand{\mpb}[1]{\medskip}
\newcommand{\bpb}[1]{\bigskip}
\newcommand{\e}{\varepsilon}
\renewcommand{\d}{\delta}
\newcommand{\g}{\gamma}
\newcommand{\G}{\Gamma}
\renewcommand{\th}{\theta}
\begin{document}
\DeclareGraphicsExtensions{.jpg,.pdf,.mps,.png}
\title{On the hyperbolicity constant of circular-arc graphs}

\author[Rosal{\'\i}o Reyes]{Rosal{\'\i}o Reyes}
\address{Facultad de Matem\'aticas, Universidad Aut\'onoma de Guerrero,
Carlos E. Adame No.54 Col. Garita, 39650 Acalpulco Gro., Mexico}
\email{khanclawn@hotmail.com}

\author[Jos\'e M. Rodr{\'\i}guez]{Jos\'e M. Rodr{\'\i}guez$^{(1)(2)}$}
\address{Departamento de Matem\'aticas, Universidad Carlos III de Madrid,
Avenida de la Universidad 30, 28911 Legan\'es, Madrid, Spain}
\email{jomaro@math.uc3m.es}
\thanks{$^{(1)}$ Supported in part by three grants from Ministerio de Econom{\'\i}a y Competititvidad, Agencia Estatal de
Investigación (AEI) and Fondo Europeo de Desarrollo Regional (FEDER) (MTM2013-46374-P, MTM2016-78227-C2-1-P and MTM2015-69323-REDT), Spain.}

\author[Jos\'e M. Sigarreta]{Jos\'e M. Sigarreta$^{(1)(2)}$}
\address{Facultad de Matem\'aticas, Universidad Aut\'onoma de Guerrero,
Carlos E. Adame No.54 Col. Garita, 39650 Acalpulco Gro., Mexico}
\email{josemariasigarretaalmira@hotmail.com}
\thanks{$^{(2)}$ Supported in part by a grant from CONACYT (FOMIX-CONACyT-UAGro 249818), M\'exico.}

\author[Mar{\'\i}a Villeta]{Mar{\'\i}a Villeta$^{(1)}$}
\address{Departamento de Estad{\'\i}stica e Investigaci\'on Operativa III, Facultad de Estudios Estad\'{\i}sticos, Universidad Complutense de Madrid,
Av. Puerta de Hierro s/n., 28040 Madrid, Spain}
\email{mvilleta@estad.ucm.es}

\date{\today}

\maketitle{}


\begin{abstract}
Gromov hyperbolicity is an interesting geometric property,
and so it is natural to study it in the context of geometric graphs.
It measures the tree-likeness of a graph from a metric viewpoint.
In particular, we are interested in circular-arc graphs, which is an important class of geometric intersection graphs.
In this paper we give sharp bounds for the hyperbolicity constant of (finite and infinite) circular-arc graphs.
Moreover, we obtain bounds for the hyperbolicity constant of the complement and line of any circular-arc graph.
In order to do that, we obtain new results about regular, chordal and line graphs which are interesting by themselves.
\end{abstract}

{\it Keywords:}  Circular graphs; Circular-arc graphs; Geometric graphs; Gromov hyperbolicity; Geodesics.

{\it AMS Subject Classification numbers $2010$:} 05C62; 05C63; 05C10; 05C75; 05C12.

\section{Introduction}

The concept of Gromov hyperbolicity grasps the essence of negatively curved
spaces like the classical hyperbolic space, simply connected Riemannian manifolds of
negative sectional curvature bounded away from $0$, and of discrete spaces like trees
and the Cayley graphs of many finitely generated groups (see the precise definition of hyperbolicity in the next section).
It is remarkable that a simple concept leads to such a rich
general theory (see \cite{GH, G1}). The first works on Gromov hyperbolic spaces deal with
finitely generated groups (see \cite{G1}). 
Initially, Gromov hyperbolic spaces were applied to the study of automatic groups in the science of computation
(see, e.g., \cite{O}); indeed, hyperbolic groups are strongly geodesically automatic, i.e., there is an automatic structure on the group \cite{Cha}. This concept appears also in algorithms
and networking.
For example, it has been shown empirically
in \cite{ShTa} that the internet topology embeds with better accuracy
into a hyperbolic space than into an Euclidean space
of comparable dimension (formal proofs that the distortion is related to the hyperbolicity can be found in \cite{VeSu});
the same holds for many complex networks, see \cite{KPKVB}.
A few algorithmic problems in
hyperbolic spaces and hyperbolic graphs have been considered
in recent papers (see \cite{Kra} and the references therein).
Furthermore, hyperbolic spaces are useful in secure transmission of information on the
network (see \cite{
K21}).

In \cite{T} it was proved the
equivalence of the hyperbolicity of many negatively curved surfaces
and the hyperbolicity of a graph related to it; hence, it is useful
to know hyperbolicity criteria for graphs from a geometrical viewpoint.
In recent years, the study
of mathematical properties of Gromov hyperbolic spaces
has become a topic of increasing interest in
graph theory and its applications (see, e.g., \cite{
BRRS,
BPK,BHB1,CDEHV,
CoCoLa,CD,CoDu,FIV,K50,
K21,
KoMo,KPKVB,MP,
Sha1,S,S2,Si,T,WZ} and the references therein).

For a finite graph with $n$ vertices it is possible to compute $\d(G)$ in time $O(n^{3.69})$ \cite{FIV} (this is improved in \cite{CoCoLa,CD}).
Given a Cayley graph (of a presentation with solvable word problem) there is an algorithm which allows to decide if it is hyperbolic \cite{Pap}.
A refinement of this approach has been proposed in \cite{ChChPaPe}, that allows to do the same for many graphs: in particular, it provides a simple constant-factor approximation of the hyperbolicity constant of a graph on $n$ vertices in $O(n^2)$
time when the graph is given by its distance-matrix.
However, deciding whether or not a general infinite graph is hyperbolic is usually very difficult.
Therefore, it is interesting to relate hyperbolicity with other properties of graphs.
The papers \cite{BCD,BHB1,WZ}
prove that chordal and $k$-chordal graphs are hyperbolic.
These results relating chordality and hyperbolicity are improved in \cite{BCRS,MP}.
Some other authors have obtained results on hyperbolicity for particular classes of graphs: vertex-symmetric graphs,
bipartite and intersection graphs, bridged graphs, expanders, and interval graphs
\cite{CaFu,CoDu,KoMo,LiTu,RS}.
In the recent paper \cite{CoDu2} new symmetric graph classes are proved to be non-hyperbolic.

A geometric graph is a graph in which the vertices or edges are associated with geometric objects.
One of the main classes of geometric graphs are intersection graphs.
An \emph{intersection graph} is a graph in which each vertex is associated with a set and in which vertices are connected by edges
whenever the corresponding sets have a nonempty intersection.
Since hyperbolicity is an interesting geometric property, it is natural to study this property for some classes of geometric graphs.
Intersection graphs are very important in both theoretical and application
point of view (see, e.g., \cite{60}).
In \cite{RS} the authors study the hyperbolicity constant of interval graphs (an interval graph is the intersection graph of a family of intervals on the real line).
In this paper we work with circular-arc graphs (another important class of intersection graphs).

A \emph{circular-arc graph} (or \emph{circular graph}) is the intersection graph of a family of arcs on the unit circle $\SS$.
It has one vertex for each arc in the family, and an edge between every pair of corresponding vertices to arcs that intersect.
Circular-arc graphs are useful in modeling periodic resource allocation problems in operations research (each arc represents a request for a resource for a specific period repeated in time).
They also have applications in different fields such as genetic
research, traffic control, computer compiler design and statistics (see, e.g., \cite{Pal}).
In \cite{65} appears an $O(n + m)$ time algorithm for recognizing a circular-arc graph (with $n$ vertices and $m$ edges).

Of course, every interval graph can be viewed as a circular-arc graph;
if a representation of a circular-arc graph $G$ leaves some point of the unit circle uncovered, it is topologically the same as an interval representation of $G$
(by cutting the circle and straighten it out to a straight line);
we will use this identification along the paper.

In this paper we give sharp bounds for the hyperbolicity constant of circular-arc graphs (see Theorem \ref{t:main}).
These bounds are improved in Theorem \ref{t:main2} for proper circular-arc graphs.
Theorem \ref{t:l4} gives a sufficient condition in order to attain the lower bound of $\d(G)$ in Theorem \ref{t:main};
in particular, it shows that this bound is sharp.
Propositions \ref{t:0} and \ref{t:34} characterize the circular-arc graphs with small hyperbolicity constant.
In Section 4, we obtain bounds in Theorems \ref{t:comple} and \ref{t:line} for the hyperbolicity constant of the complement and line of a circular-arc graph, respectively.
These theorems improve, for circular-arc graphs, the general bounds for the hyperbolicity constant of the complement and line graphs.
In order to do that, we obtain new results about regular, chordal and line graphs which are interesting by themselves (see Theorems \ref{t:regular} and \ref{t:chordal}).
Although some results of this paper can be viewed as generalizations to the context of circular-arc graphs of results for interval graphs (see \cite{RS}),
we want to remark that Theorem \ref{t:line} is new even for interval graphs.

Besides \cite{RS}, in \cite{CoDu} sharp inequalities for the hyperbolicity constant of several classes of intersection graphs are provided. In particular, for
line, clique and biclique graphs, and some extensions of line graphs (incidence, total, middle and $k$-edge graphs).
Hence, the results in \cite{CoDu} are different from the results in this paper.
Furthermore, the bounds in \cite{CoDu} refer to the hyperbolicity constant with respect to the four-point definition of hyperbolicity;
the inequalities for the hyperbolicity constant with respect to a definition can be translated to the hyperbolicity constant with respect to another definition, with additional multiplicative and/or additive constants (for instance, multiplying or dividing by $3$ the initial upper or lower bound, respectively).
But note that it is not difficult to check that circular-arc graphs are hyperbolic;
for a fixed definition of hyperbolicity, the challenge is to obtain sharp bounds for the hyperbolicity constant.

\section{Background and previous results}

We collect in this section some previous definitions and results which will be useful along the paper.

\smallskip

We say that the curve $\g$ in a metric space $X$ is a
\emph{geodesic} if we have $L(\g|_{[t,s]})=d(\g(t),\g(s))=|t-s|$ for every $s,t\in [a,b]$
(then $\gamma$ is equipped with an arc-length parametrization).
The metric space $X$ is said \emph{geodesic} if for every couple of points in
$X$ there exists a geodesic joining them; we denote by $[xy]$
any geodesic joining $x$ and $y$; this notation is ambiguous, since in general we do not have uniqueness of
geodesics, but it is very convenient.
Consequently, any geodesic metric space is connected.
If the metric space $X$ is
a graph, then the edge joining the vertices $u$ and $v$ will be denoted by $uv$.

Throughout this paper, $G=(V,E)=(V(G),E(G))$ denotes a (finite or infinite) simple (without loops and multiple edges)
graph (not necessarily connected) such that $V\neq \emptyset$ and
every edge has length $1$.
In order to consider a graph $G$ as a geodesic metric space, identify (by an isometry)
any edge $uv\in E(G)$ with the interval $[0,1]$ in the real line;
then the edge $uv$ (considered as a graph with just one edge)
is isometric to the interval $[0,1]$.
Thus, the points in $G$ are the vertices and, also, the points in the interior
of any edge of $G$.
In this way, any connected graph $G$ has a natural distance
defined on its points, induced by taking shortest paths in $G$,
and we can see $G$ as a metric graph.
We denote by $d_G$ or $d$ this distance.
If $x,y$ are in different connected components of $G$, we define $d_G(x,y)=\infty$.
These properties guarantee that any connected component of any graph is a geodesic metric space.

If $X$ is a geodesic metric space and $x_1,x_2,x_3\in X$, the union
of three geodesics $[x_1 x_2]$, $[x_2 x_3]$ and $[x_3 x_1]$ is a
\emph{geodesic triangle} that will be denoted by $T=\{x_1,x_2,x_3\}$
and we will say that $x_1,x_2$ and $x_3$ are the vertices of $T$; it
is usual to write also $T=\{[x_1x_2], [x_2x_3], [x_3x_1]\}$. We say
that $T$ is $\d$-{\it thin} if any side of $T$ is contained in the
$\d$-neighborhood of the union of the two other sides. We denote by
$\d(T)$ the sharp thin constant of $T$, i.e., $ \d(T):=\inf\{\d\ge 0\,|\,
\, T \, \text{ is $\d$-thin}\}. $ The space $X$ is
$\d$-\emph{hyperbolic} $($or satisfies the {\it Rips condition} with
constant $\d)$ if every geodesic triangle in $X$ is $\d$-thin.
If we have a triangle with two
identical vertices, we call it a ``bigon"$\!$. Obviously, every bigon in
a $\d$-hyperbolic space is $\d$-thin.
We denote by $\d(X)$ the sharp hyperbolicity constant of $X$, i.e.,
$\d(X):=\sup\{\d(T)\,|\, \, T \, \text{ is a geodesic triangle in }X \}.$
We say that $X$ is \emph{hyperbolic} if $X$ is $\d$-hyperbolic for some $\d \ge 0$; then $X$ is hyperbolic if and only if $ \d(X)<\infty.$
If $X$ has connected components $\{X_i\}_{i\in I}$, then we define $\d(X):=\sup_{i\in I} \d(X_i)$, and we say that $X$ is hyperbolic if $\d(X)<\infty$.

In the classical references on this subject (see, e.g., \cite{GH}) appear
several different definitions of Gromov hyperbolicity, which are equivalent in
the sense that if $X$ is $\d$-hyperbolic with respect to one definition, then it is
$\d'$-hyperbolic with respect to another definition (for some $\d'$ related to $\d$).
We have chosen this definition by its deep geometric meaning \cite{GH}.

We want to remark that the main examples of hyperbolic graphs are the trees.
In fact, the hyperbolicity constant of a geodesic metric space can be viewed as a measure of
how ``tree-like'' the space is, since those spaces $X$ with $\delta(X) = 0$ are precisely the metric trees.
This is an interesting subject since, in
many applications, one finds that the borderline between tractable and intractable
cases may be the tree-like degree of the structure to be dealt with
(see, e.g., \cite{CYY}).
However, the hyperbolicity constant does not relate the graph in question to a specific tree (if connected) or forest (if not connected). In \cite{Sha5}, a measure called forest likelihood is introduced to estimate the likelihood of any given forest via a random dynamical generation process. This measure establishes an interesting connection between static graphs and dynamically growing graphs.

\smallskip

For any graph $G$, we define, as usual,
$$
\begin{aligned}
\diam V(G) & := \sup \big\{ d_G(v,w)\,|\,\, v,w \in V(G) \big\},
\\
\diam G & := \sup \big\{ d_G(x,y)\,|\,\, x,y \in G \big\}.
\end{aligned}
$$
i.e, $\diam V(G)$ is the diameter of the set of vertices of $G$, and $\diam G$ is the diameter of the whole graph $G$
(recall that in order to have a geodesic metric space, $G$ must contain both the vertices and the points in the interior of any edge of $G$).

\smallskip

The following result is well-known (see, e.g., \cite[Theorem 8]{RSVV} for a proof).

\begin{lemma} \label{dddd}
In any graph $G$ the inequality $\delta(G) \leq \frac{1}{2} \diam G$
holds.
\end{lemma}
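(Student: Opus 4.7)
The plan is to bound $\delta(T)$ for an arbitrary geodesic triangle $T$ in $G$ by $\frac{1}{2}\diam G$, and then take the supremum over triangles to conclude. Since $G$ may be disconnected, I would first note that $\delta(G)=\sup_i\delta(G_i)$ and $\diam G=\sup_i \diam G_i$ over the connected components, so it suffices to work inside a single connected component; without loss of generality I can assume $G$ is connected (and if $\diam G=\infty$ the inequality is trivial, so I may also assume $\diam G<\infty$).

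Fix a geodesic triangle $T=\{[x_1x_2],[x_2x_3],[x_3x_1]\}$ and an arbitrary point $p$ on one of its sides, say $p\in[x_1x_2]$. The key observation is simply that $[x_1x_2]$ is a geodesic, so $d(p,x_1)+d(p,x_2)=d(x_1,x_2)\le\diam G$, and therefore
\[
\min\bigl\{d(p,x_1),d(p,x_2)\bigr\}\le\tfrac12\,d(x_1,x_2)\le\tfrac12\diam G.
\]
Since $x_1$ lies on $[x_3x_1]$ and $x_2$ lies on $[x_2x_3]$, the point $p$ is within distance $\tfrac12\diam G$ of the union of the two sides $[x_2x_3]\cup[x_3x_1]$. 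By symmetry the same bound holds for points on the other two sides, so $T$ is $\tfrac12\diam G$-thin and $\delta(T)\le\tfrac12\diam G$.

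Taking the supremum over all geodesic triangles $T$ in $G$ yields $\delta(G)\le\tfrac12\diam G$, which is the claim. I do not foresee a real obstacle here: the argument is essentially a one-line application of the triangle equality on a geodesic side, combined with the fact that the triangle's vertices lie on the adjacent sides. The only minor care needed is the trivial cases (infinite diameter, disconnected $G$, or degenerate triangles where two vertices coincide, i.e.\ bigons), all of which are handled automatically by the same estimate.
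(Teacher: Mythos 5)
Your proof is correct, and it is essentially the standard argument (the paper only cites \cite[Theorem 8]{RSVV} for this fact, and the proof there proceeds in the same way): a point on a geodesic side is within half the side's length of one of its endpoints, each of which lies on another side. The reduction to connected components and the trivial infinite-diameter case are handled appropriately, so nothing is missing.
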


We say that a subgraph $\G$ of $G$ is \emph{isometric} if $d_{\G}(x,y)=d_{G}(x,y)$ for every $x,y\in \G.$

\smallskip

We need the following elementary results (see, e.g., \cite[Lemma 5 and Theorem 11]{RSVV} for some proofs).

\begin{lemma} \label{lem_LX}
If $\G$ is an isometric subgraph of $G$, then $\d(\G) \le \d(G)$.
\end{lemma}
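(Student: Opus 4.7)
The plan is to transfer each geodesic triangle in $\Gamma$ directly into $G$, use the $\delta(G)$-thinness there, and then pull the conclusion back to $\Gamma$ via the isometric hypothesis.

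First, I fix an arbitrary geodesic triangle $T=\{x_1,x_2,x_3\}$ in $\Gamma$ with sides $[x_ix_j]_\Gamma$, and I consider a point $p$ on one of its sides, say $p\in[x_1x_2]_\Gamma$. The goal is to produce $q$ on $[x_1x_3]_\Gamma\cup[x_2x_3]_\Gamma$ with $d_\Gamma(p,q)\le \delta(G)$, which gives $\delta(T)\le \delta(G)$ and, taking supremum over $T$, the desired bound $\delta(\Gamma)\le \delta(G)$.

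Next I argue that the sides $[x_ix_j]_\Gamma$ are also geodesics in $G$. Each such side is a curve contained in $\Gamma\subset G$ whose length, computed in either space, equals $d_\Gamma(x_i,x_j)$. By the isometric hypothesis, $d_\Gamma(x_i,x_j)=d_G(x_i,x_j)$, so the curve realizes the distance in $G$ as well and is therefore a geodesic of $G$. Consequently $T$ can be viewed as a geodesic triangle in $G$, and the Rips condition in $G$ produces a point $q\in[x_1x_3]_\Gamma\cup[x_2x_3]_\Gamma$ with $d_G(p,q)\le \delta(G)$.

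Finally, since $p$ and $q$ both lie in $\Gamma$ (they are points of the sides of $T$, which live in $\Gamma$), applying the isometric hypothesis once more gives $d_\Gamma(p,q)=d_G(p,q)\le\delta(G)$, which is exactly what was needed.

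The only delicate point to keep in mind is the interpretation of ``isometric subgraph'' on the metric graph (including edge interiors): one must verify that $d_\Gamma(x,y)=d_G(x,y)$ when $x$ or $y$ lies in the interior of an edge of $\Gamma$. This follows at once from the vertex-level isometric property, since the shortest path in $\Gamma$ between two interior points differs from a vertex-to-vertex shortest path only by at most two sub-edge segments whose lengths are intrinsic to the edges themselves and hence are the same in $\Gamma$ and in $G$. With this observation in place the argument above is complete, and no further estimates are required.
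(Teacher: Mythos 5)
Your proof is correct and is essentially the standard argument for this lemma (the paper itself gives no proof, citing \cite{RSVV} instead): geodesics of $\G$ have length $d_\G(x_i,x_j)=d_G(x_i,x_j)$ and hence are geodesics of $G$, so every geodesic triangle of $\G$ is $\d(G)$-thin in $G$, and the isometric hypothesis transfers the thinness estimate back to $d_\G$. The only remark is that your closing ``delicate point'' is unnecessary here, since the paper defines an isometric subgraph by $d_{\G}(x,y)=d_{G}(x,y)$ for \emph{every} $x,y\in\G$ as a metric graph, i.e.\ already including points interior to edges.
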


\begin{lemma} \label{cyclegarph}
If $C_n$ denotes the cycle graph with $n\ge 3$ vertices, then $\delta(C_n) = n/4$.
\end{lemma}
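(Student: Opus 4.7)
The plan is to prove the identity by separately establishing the two inequalities $\d(C_n) \le n/4$ and $\d(C_n) \ge n/4$.

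For the upper bound, I would first observe that the metric graph $C_n$ has total length $n$, and that any two points on the cycle are joined by at most two minimizing paths going around the two sides of the cycle; the longer of these has length at least $n/2$, so the shorter (which realizes the distance) has length at most $n/2$. The equality case occurs for antipodal points, giving $\diam C_n = n/2$. An immediate application of Lemma \ref{dddd} then yields
\[
\d(C_n) \;\le\; \tfrac{1}{2}\diam C_n \;=\; \tfrac{n}{4}.
\]

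For the lower bound, I would exhibit a concrete geodesic triangle whose thin constant equals $n/4$. Let $x,y \in C_n$ be a pair of antipodal points, so $d_{C_n}(x,y)=n/2$, and let $\g_1,\g_2$ denote the two arcs of $C_n$ from $x$ to $y$, each of length $n/2$ and each a geodesic. Form the degenerate triangle $T=\{x,y,y\}$ with sides $\g_1=[xy]$, $\g_2=[yx]$ and the trivial side $[yy]=\{y\}$ (i.e., a bigon). Let $p$ be the midpoint of $\g_1$, so $d(p,x)=d(p,y)=n/4$. For any point $q \in \g_2$ the only paths from $p$ to $q$ go through $x$ or through $y$, so
\[
d_{C_n}(p,q) \;=\; \min\bigl\{\,d(p,x)+d(x,q),\; d(p,y)+d(y,q)\,\bigr\} \;\ge\; \tfrac{n}{4},
\]
with equality iff $q$ is the midpoint of $\g_2$. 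Hence $p$ lies at distance exactly $n/4$ from $\g_2 \cup \{y\}$, which forces $\d(T) \ge n/4$ and therefore $\d(C_n)\ge n/4$.

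Combining the two bounds gives $\d(C_n)=n/4$, as claimed. The argument is essentially routine once one notes that antipodal points on $C_n$ produce a bigon whose two sides are maximally far apart; the only small subtlety is to justify the distance computation for $p$ by ruling out shortcuts, which follows because on a cycle any path between points on different arcs must pass through one of the endpoints $x$ or $y$.
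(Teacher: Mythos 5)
Your proof is correct and is essentially the standard argument; the paper does not prove this lemma itself but imports it from \cite{RSVV}, where the same bigon-on-antipodal-points computation combined with the bound $\d(G)\le\frac12\diam G$ is used. One small factual slip: equality in $d_{C_n}(p,q)\ge n/4$ for $q\in\g_2$ holds iff $q\in\{x,y\}$, not at the midpoint of $\g_2$ (where $d_{C_n}(p,q)=n/2$); this does not affect the conclusion, since the distance $d_{C_n}(p,\g_2\cup\{y\})=n/4$ is still attained and the lower bound follows.
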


If $G$ is a circular-arc graph, then a set of vertices $K=\{v_1,\dots,v_r\}$ and corresponding arcs $\{I_1,\dots,I_r\}$ is said
\emph{total} if $I_1\cup \cdots \cup I_r = \SS$, and we say that $r$ is the \emph{size} of $K$.

We say that a circular-arc graph $G$ is $NI$ if it has a total set of vertices.
If either $G$ is a finite circular-arc graph or every arc is open, then $G$ is $NI$ if and only if the union of the corresponding arcs to vertices in $G$ is $\SS$.
Note that a circular-arc graph $G$ is also an interval graph if and only if it is not NI.
In \cite{RS} the authors study the hyperbolicity constant of interval graphs.

For any $NI$ circular-arc graph $G$, let us define
$$
\varrho(G) := \min \big\{ \, size(K)\, |\; K \, \text{ is a total set of vertices in }\, G\,\big\}.
$$
If $G$ is an interval graph, then we define $\varrho(G) := 0$.
Hence, a circular-arc graph $G$ is NI if and only if $\varrho(G) \ge 1$.
Note that $\varrho = 1$ if and only if an arc is the whole unit circle $\SS$.

\smallskip

As usual, by \emph{cycle} we mean a simple closed curve, i.e., a path with different vertices,
unless the last one, which is equal to the first vertex.

Given a graph $G$, we denote by
$J(G)$ the union of the set $V(G)$ and the midpoints of the edges
of $G$. Consider the set $\mathbb{T}_1$ of geodesic triangles $T$ in
$G$ that are cycles and
such that the three vertices of the triangle $T$ belong to $J(G)$, and
denote by $\delta_1(G)$ the infimum of the constants
$\lambda$ such that every triangle in $\mathbb{T}_1$ is
$\lambda$-thin.

The following result, which appears in \cite[Theorems 2.5,
2.6 and 2.7]{BRS}, will be used throughout the paper.

\begin{theorem} \label{t:BRS}
For every graph $G$ we have $\delta_1(G)=\delta(G)$.
Furthermore, if $G$ is hyperbolic, then $\delta(G)$ is a multiple of $1/4$
and there exists $T \in \mathbb{T}_1$ with $\delta(T)=\delta(G)$.
\end{theorem}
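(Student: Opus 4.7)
The plan is to establish $\delta_1(G)=\delta(G)$ through two inequalities, and then to extract both the multiple-of-$1/4$ property and the existence of an extremal triangle from the structural information revealed by that proof. The inequality $\delta_1(G) \le \delta(G)$ is immediate, since every element of $\mathbb{T}_1$ is a geodesic triangle of $G$, so the supremum over $\mathbb{T}_1$ cannot exceed the supremum over all geodesic triangles.

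For the reverse inequality $\delta(G) \le \delta_1(G)$, fix an arbitrary geodesic triangle $T = \{x_1, x_2, x_3\}$ and a point $p$ on one of its sides; the goal is to bound $d(p, [x_2 x_3] \cup [x_3 x_1])$ by $\delta_1(G)$. First I would reduce to the case where $T$ is a simple closed curve: if two sides share a nontrivial subarc, or if one vertex lies on the opposite side, then $T$ decomposes as a union of a bigon and/or smaller geodesic triangles whose thinness dominates that of $T$, and iterating we are reduced to cyclic triangles. For a cyclic $T$, I would replace each $x_i$ by a nearest point $y_i \in J(G)$; since midpoints and vertices all belong to $J(G)$, every point of $G$ lies within distance $1/4$ of $J(G)$. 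Then one extends or shortens each geodesic side $[x_ix_j]$ near its endpoints by segments of length at most $1/4$ to obtain a new geodesic triangle $T^* \in \mathbb{T}_1$ with vertices $y_1, y_2, y_3$, arranging the perturbations at each common vertex coherently on both adjacent sides. The image $p'$ of $p$ under this modification then satisfies
$$
d\bigl(p, [x_2x_3]\cup[x_3x_1]\bigr) \;\le\; d\bigl(p', [y_2y_3]\cup[y_3y_1]\bigr) \;\le\; \delta(T^*) \;\le\; \delta_1(G),
$$
provided the contribution of the perturbations near the three vertices cancels appropriately; a careful bookkeeping achieves this.

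For the multiple-of-$1/4$ claim, I would exploit the combinatorial rigidity of triangles in $\mathbb{T}_1$: their sides have half-integer length, and the distance from a point on one side to the union of the two other sides is a piecewise linear $1$-Lipschitz function of an arc-length parameter, with break-points located at arc-coordinates in $\tfrac{1}{2}\mathbb{Z}$. Such a function attains its maximum at a break-point or at the midpoint of a linear piece, forcing the extreme value to lie in $\tfrac{1}{4}\mathbb{Z}$. Therefore $\delta(T) \in \tfrac{1}{4}\mathbb{N}$ for every $T \in \mathbb{T}_1$. When $G$ is hyperbolic, $\{\delta(T) : T \in \mathbb{T}_1\}$ is a bounded subset of the discrete set $\tfrac{1}{4}\mathbb{N}$, so its supremum $\delta_1(G) = \delta(G)$ is attained by some $T \in \mathbb{T}_1$.

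The main obstacle is the perturbation step: one must control rigorously how moving an endpoint by up to $1/4$ propagates along a geodesic side, particularly when the original geodesic passes near several vertices of $G$ or when uniqueness of geodesics fails. The reduction to cyclic triangles in the first step is exactly what rules out the pathological coincidences between sides that would otherwise spoil the displayed chain of inequalities, so getting that reduction tight is where the main care is required.
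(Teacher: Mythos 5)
First, a point of context: the paper does not prove this statement at all --- it is quoted verbatim from \cite[Theorems 2.5, 2.6 and 2.7]{BRS}, so there is no internal proof to compare yours against. Judged on its own terms, your proposal reproduces the standard architecture of the argument in [BRS]: the trivial inequality $\delta_1(G)\le\delta(G)$, the reduction to geodesic triangles that are cycles, the perturbation of the three triangle vertices into $J(G)$, and the quarter-integer rigidity of triangles in $\mathbb{T}_1$ which simultaneously yields that $\delta(G)\in\frac14\mathbb{N}$ and that the supremum is attained. The overall plan is the right one.

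The genuine gap is the perturbation step, which you yourself flag as ``the main obstacle'' and then dispose of with ``a careful bookkeeping achieves this.'' That is precisely the content of the theorem, and the displayed inequality $d(p,[x_2x_3]\cup[x_3x_1])\le d(p',[y_2y_3]\cup[y_3y_1])$ is false for an arbitrary choice of nearest points $y_i\in J(G)$: if a side is \emph{extended} toward $y_i$, the target set $[y_2y_3]\cup[y_3y_1]$ becomes larger and the distance to it can only decrease; if it is \emph{shortened}, the truncated path need not remain a geodesic between the new endpoints, and the two sides meeting at $x_i$ may exit the edge containing $x_i$ through opposite endpoints, so no single choice of $y_i$ shortens both. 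A naive estimate only gives $\delta(G)\le\delta_1(G)+c$ for some positive constant, not equality; closing the gap requires the case analysis of [BRS] on where the triangle's corners sit inside their edges relative to $p$. There is also a smaller imprecision in the rigidity step: if the breakpoints of $t\mapsto d(\gamma(t),A)$ all had coordinates in $\frac12\mathbb{Z}$ and the slopes were $\pm1$, the extreme values would lie in $\frac12\mathbb{Z}$; the genuinely quarter-integer values come from peaks \emph{interior} to an edge, at the coordinate where the two routes (through either endpoint of the edge) to $A$ tie, and one must check that $d(v,A)\in\frac12\mathbb{Z}$ for every vertex $v$ when $A$ is a union of geodesics with endpoints in $J(G)$. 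That verification is routine, but it is not the ``maximum at a break-point or at the midpoint of a linear piece'' dichotomy you state.
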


The following result in \cite[Theorem 11]{MRSV} will be useful.

\begin{theorem} \label{t:04}
If $G$ is a graph with edges of length $1$ with $\d(G)< 1$, then we have either $\d(G) = 0$ or $\d(G) = 3/4$.
Furthermore,

\begin{itemize}
\item $\d(G) = 0$ if and only if $G$ is a tree.

\item $\d(G) = 3/4$ if and only if $G$ is not a tree and every cycle in $G$ has length $3$.
\end{itemize}
\end{theorem}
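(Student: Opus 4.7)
My plan is to combine Theorem \ref{t:BRS} with a classification of the cycles of $G$. By Theorem \ref{t:BRS}, $\delta(G)$ is a non-negative multiple of $1/4$, so the hypothesis $\delta(G) < 1$ leaves only $\delta(G) \in \{0, 1/4, 1/2, 3/4\}$, and there is an extremal $T_0 \in \mathbb{T}_1$ with $\delta(T_0) = \delta(G)$ (possibly a degenerate bigon, i.e., a triangle two of whose vertices coincide). If $G$ is a tree, every geodesic triangle is degenerate and $\delta(G) = 0$. Otherwise, let $g$ be the girth of $G$; a standard shortest-path argument shows that any shortest cycle of $G$ is isometric in $G$ (a strictly shorter connecting path between two of its vertices would close into a simple cycle of length $< g$), so Lemmas \ref{lem_LX} and \ref{cyclegarph} give $\delta(G) \geq g/4 \geq 3/4$. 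This already excludes the values $1/4$ and $1/2$ and forces $g = 3$ whenever $\delta(G) < 1$ and $G$ is not a tree.

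Next I would establish the upper bound $\delta(G) \leq 3/4$ under the assumption that every cycle of $G$ has length $3$. The extremal $T_0$ is a simple closed curve in $G$, and every edge meeting $T_0$ must be traversed completely by the union of the three sides (even where a vertex of $T_0$ sits at the midpoint of an edge, the two sides incident to that vertex split the edge into its two halves); so $T_0$ traces a graph-theoretic cycle of integer length. By hypothesis this length is $3$, and an elementary case analysis of all admissible configurations of three points in $J(C_3)$ whose geodesics form a simple closed curve shows $\delta(T_0) \leq 3/4$ (the two kinds of $(1,1,1)$-triangles give $\delta = 1/2$ and the $(1/2,1,3/2)$-triangles give $\delta = 3/4$). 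A bigon would trace a cycle of length $2 d_G(u,v) \geq 4$, contradicting the hypothesis. Combined with the lower bound $3/4$ obtained above, this yields $\delta(G) = 3/4$.

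The main obstacle is the converse direction of the $3/4$-characterization: if $G$ contains any cycle of length $\geq 4$, I must show $\delta(G) \geq 1$. My plan is a bigon construction. If $G$ contains a $4$-cycle $v_1 v_2 v_3 v_4$, take a pair of opposite vertices at $G$-distance $2$ (if both diagonal pairs have distance $1$ then $G \supseteq K_4$, and one instead uses the bigon between the midpoints of two disjoint edges of $K_4$); the two length-$2$ arcs are then distinct geodesics, and the midpoint of one has $G$-distance $\geq 1$ from the other (its nearest point being a shared endpoint), so $\delta \geq 1$. If $G$ has no $4$-cycle but contains a cycle of length $n \geq 5$, then the shortest such cycle $C$ has no chord: any chord would split $C$ into smaller cycles of lengths $a+1$ and $b+1$ with $a + b = n$ and $a, b \geq 2$, and since $n \geq 5$ forces $\max(a,b) \geq 3$ we would obtain a cycle of length in $[4, n-1]$, contradicting minimality. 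The same shortest-path argument used for the girth then shows $C$ is isometric, yielding $\delta(G) \geq n/4 \geq 5/4$. Combining the two directions completes the classification.
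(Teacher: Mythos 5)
Your overall strategy is sound and, notably, self-contained: the paper does not actually prove this statement but imports it from \cite[Theorem 11]{MRSV}, so there is no ``paper's proof'' to match. Your reduction via Theorem \ref{t:BRS} to the values $\{0,1/4,1/2,3/4\}$, the elimination of $1/4$ and $1/2$ by exhibiting an isometric shortest cycle, and the reduction of the upper bound to a case analysis of geodesic triangles of $\mathbb{T}_1$ supported on a $3$-cycle are all correct ideas, and the $4$-cycle bigon construction works. Be aware, though, that Lemma \ref{p:c3} of the paper already packages both lower bounds you re-derive ($\delta(G)\ge 3/4$ from a cycle of length $\ge 3$, and $\delta(G)\ge 1$ from a cycle of length $\ge 4$), so your entire third paragraph and the girth argument could be replaced by a citation.

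Two steps as written are not correct. First, your dismissal of bigons in the upper bound: it is false that a bigon in $\mathbb{T}_1$ traces a cycle of length $2d_G(u,v)\ge 4$. Take $u$ a vertex of a triangle and $v$ the midpoint of the opposite edge; then $d_G(u,v)=3/2$ and the two geodesics of length $3/2$ form a bigon tracing a $3$-cycle, i.e., the partition $(0,3/2,3/2)$ is admissible. Your conclusion survives because this bigon is also $3/4$-thin (its worst point is the midpoint of one side, at distance $3/4$ from the other), but the case must be computed, not excluded. Second, in Case B the sentence ``the same shortest-path argument used for the girth then shows $C$ is isometric'' hides a real difficulty: for the girth, any extracted cycle of length $<g$ is an immediate contradiction, whereas here the closed walk formed by a shortcut and an arc of $C$ could a priori yield only a triangle, which is permitted. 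To repair it, choose among all non-isometric pairs on $C$ one minimizing $d_G(u,v)=\ell$, so that the shortcut meets $C$ only at its endpoints; chordlessness then gives $\ell\ge 2$, hence $d_C(u,v)\ge 3$, and the union of the shortcut with the short arc is a genuine cycle of length in $[5,n-1]$, contradicting minimality. With these two repairs (or by simply invoking Lemma \ref{p:c3}) the proof is complete.
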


We say that a vertex $v$ in a graph $G$ is a \emph{cut-vertex} if $G\setminus v$ is not connected.
A graph is \emph{biconnected} if it does not contain cut-vertices.
Given a graph $G$, we say that a family of subgraphs $\{G_{s} \}_s$ of $G$ is a \emph{T-decomposition} of $G$ if $\cup_s G_{s} = G $ and $G_{s}\cap G_{r}$ is either a \emph{cut-vertex} or the empty set for each $s\neq r$.
The well-known biconnected decomposition of any graph is an example of T-decomposition.

\smallskip

It is known that the hyperbolicity constant of a graph is the supremum of the hyperbolicity constants of its biconnected components \cite{G1}.
One can check that the following result also holds (see, e.g., \cite[Theorem 3]{BRSV2} for a proof).

\begin{proposition} \label{t:Tdec}
Let $G$ be a graph and $\{G_{s} \}_s$ be any T-decomposition of $G$, then
$$
\delta(G) =\sup_{s} \delta (G_{s}) .
$$
\end{proposition}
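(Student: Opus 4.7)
The argument will split into two inequalities.

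For the lower bound $\sup_s \delta(G_s) \le \delta(G)$, I would first show that every $G_s$ is an isometric subgraph of $G$ and then apply Lemma~\ref{lem_LX}. To prove the isometry, given $u,v \in V(G_s)$ and a $G$-geodesic $\gamma$ from $u$ to $v$, I would argue by contradiction that $\gamma \subset G_s$: if $\gamma$ left $G_s$, it would have to exit through a shared cut-vertex $w \in G_s \cap G_r$ (for some $r \neq s$) and later return to $G_s$ in order to reach $v$; since $w$ is the only vertex connecting the two sides, the only way back forces $\gamma$ to revisit $w$, contradicting the fact that a geodesic is a simple path.

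For the upper bound $\delta(G) \le \sup_s \delta(G_s)$, I would invoke Theorem~\ref{t:BRS} to reduce the problem to triangles $T \in \mathbb{T}_1$, that is, geodesic triangles which are simple cycles with vertices in $J(G)$. The crucial step is to show that any such $T$ is contained in some $G_s$: if $T$ used points of two different pieces $G_s$ and $G_r$, then the unique cut-vertex $w \in G_s \cap G_r$ would have to be traversed twice by $T$ (once to leave $G_s$, once to return in order to close up the cycle), contradicting that $T$ is a simple closed curve. Once $T \subset G_s$, the isometry of $G_s$ established above implies that $\delta(T)$ computed in $G$ coincides with $\delta(T)$ computed in $G_s$, and therefore $\delta(T) \le \delta(G_s) \le \sup_s \delta(G_s)$. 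Taking the supremum over $T \in \mathbb{T}_1$ and using $\delta(G) = \delta_1(G)$ from Theorem~\ref{t:BRS} finishes the proof.

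The main obstacle is formalizing the claim that crossing between pieces requires revisiting the shared cut-vertex. This implicitly relies on the tree-like structure induced on the pieces by a T-decomposition: removing a shared cut-vertex $w$ separates $G_s \setminus w$ from $G_r \setminus w$ into different connected components of $G \setminus w$, so any simple path or simple cycle that uses vertices from both pieces is forced to meet $w$ an even number of times. With this separation statement in hand, the simplicity of geodesics (for the first inequality) and the simplicity of the cycle triangle (for the second) each produce the desired contradiction, and both inequalities fall out in a parallel way.
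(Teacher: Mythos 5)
The paper offers no proof of this proposition itself---it simply cites \cite[Theorem 3]{BRSV2}---and your two-inequality skeleton (isometric pieces plus Lemma~\ref{lem_LX} for $\sup_s\delta(G_s)\le\delta(G)$; reduction to cycle-triangles via Theorem~\ref{t:BRS} and confinement of such a triangle to a single piece for the converse) is indeed the standard route. The genuine gap is the claim you yourself flag as ``the main obstacle'' and then use without proof: that the shared cut-vertex $w$ of $G_s$ and $G_r$ separates $G_s\setminus\{w\}$ from $G_r\setminus\{w\}$ in $G$. This does \emph{not} follow from the definition of T-decomposition as written in the paper. Take $G$ to be the cycle $v_1v_2v_3v_4v_1$ with a pendant edge $v_ip_i$ attached at each $v_i$, and let the pieces be the eight single-edge subgraphs $v_1v_2$, $v_2v_3$, $v_3v_4$, $v_4v_1$, $v_1p_1,\dots,v_4p_4$. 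Every pairwise intersection is empty or a single vertex $v_i$, and each $v_i$ is a cut-vertex of $G$ (it separates $p_i$ from the rest), so the stated definition is satisfied; yet $v_2$ does not separate $v_1$ from $v_3$ in $G$, geodesics and cycles cross freely between pieces, and $\sup_s\delta(G_s)=0$ while $\delta(G)\ge\delta(C_4)=1$ by Lemmas~\ref{lem_LX} and~\ref{cyclegarph}. So both halves of your argument collapse exactly at the separation step, and the statement is even false for the literal definition; the ``tree-like structure'' you appeal to is an additional hypothesis, not a consequence.

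What you need, and what does hold for the biconnected decomposition and for every T-decomposition actually invoked in this paper (the families $\{G_0,G_1,\dots\}$ in Propositions~\ref{t:0}, \ref{t:34} and Theorem~\ref{t:l4}), is the separation property: no connected component of $G\setminus\{w\}$ meets both $G_s\setminus\{w\}$ and $G_r\setminus\{w\}$ when $G_s\cap G_r=\{w\}$. Granting it, your argument closes, and the cycle case is cleaner than the ``revisit $w$'' phrasing: if a simple cycle contained consecutive edges $aw\in E(G_s)$ and $wb\in E(G_r)$ with $s\neq r$, the remainder of the cycle would be an $a$--$b$ path avoiding $w$, contradicting separation; hence every triangle of $\mathbb{T}_1$ lies in one piece. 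For the geodesic case, the first edge of $\gamma$ leaving $G_s$ must start at $w=G_s\cap G_r$ and end at some $b\in G_r\setminus\{w\}$, and separation forces the tail from $b$ back to $v\in G_s$ to pass through $w$ a second time, contradicting injectivity of geodesics. You should therefore either add the separation property (or the tree-likeness of the incidence structure between pieces and cut-vertices) to the hypotheses and prove your two confinement claims from it as above, or verify it directly for the specific decompositions to which the proposition is applied.
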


By \cite[Proposition 5 and Theorem 7]{MRSV}, we have the following result.

\begin{lemma} \label{p:c3}
If $G$ is any graph with a cycle $g$ with length $L(g) \ge 3$,
then $\d(G)\ge 3/4$.
If there exists a cycle $g$ in $G$ with length $L(g) \ge 4$,
then $\d(G)\ge 1$.
\end{lemma}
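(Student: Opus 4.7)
The plan is to deduce both inequalities directly from Theorem \ref{t:04}, which classifies the graphs with hyperbolicity constant strictly less than $1$: they are precisely the trees (with $\d=0$) and the non-tree graphs in which every cycle has length $3$ (with $\d=3/4$). In particular, the theorem asserts the dichotomy $\d(G)<1 \Longrightarrow \d(G) \in \{0, 3/4\}$, and it is this dichotomy that drives the argument.

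For the first inequality, the hypothesis that $G$ carries a cycle implies $G$ is not a tree, so $\d(G) \ne 0$ by the first bullet of Theorem \ref{t:04}. The dichotomy then leaves $\d(G) \ge 3/4$ as the only option (either $\d(G)=3/4$ or $\d(G)\ge 1$). For the second inequality, the stronger hypothesis that some cycle of $G$ has length at least $4$ additionally rules out $\d(G)=3/4$, because the second bullet of Theorem \ref{t:04} would require \emph{every} cycle of $G$ to have length $3$; combined with $\d(G) \ne 0$, the dichotomy now forces $\d(G) \ge 1$.

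The proof presents essentially no obstacle once Theorem \ref{t:04} is at hand; the only care needed is to treat Theorem \ref{t:04} as a complete classification of the hyperbolicity values below $1$, so that excluding both $0$ and $3/4$ really does push $\d(G)$ up to $1$. A more hands-on alternative for the second inequality would attempt to exhibit an isometric copy of $C_n$ with $n \ge 4$ inside $G$ and then invoke Lemma \ref{lem_LX} together with the identity $\d(C_n) = n/4$ of Lemma \ref{cyclegarph}, but this route is trickier: a shortest cycle of length at least $4$ need not be isometric when shorter triangles are also present, so some additional work would be required to rule out shortcuts.
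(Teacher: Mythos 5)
Your derivation is correct. The paper gives no proof of this lemma at all --- it simply cites \cite[Proposition 5 and Theorem 7]{MRSV} --- so there is no argument to compare against line by line; but your route, deducing both bounds from the dichotomy in Theorem \ref{t:04}, is logically sound and has the merit of staying entirely within results already quoted in the paper. The presence of a cycle rules out $\d(G)=0$ (the graph is not a tree), and a cycle of length at least $4$ additionally rules out $\d(G)=3/4$ via the ``only if'' direction of the second bullet, so the classification of values below $1$ forces $\d(G)\ge 3/4$ and $\d(G)\ge 1$ respectively. Your closing caveat is also well taken: the naive alternative of embedding an isometric $C_n$, $n\ge 4$, and invoking Lemmas \ref{lem_LX} and \ref{cyclegarph} does break down in general (e.g.\ the $4$-cycles of $K_4$ are not isometric), so the argument through Theorem \ref{t:04} is the right one. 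The only point worth making explicit is that when $G$ is disconnected one applies the dichotomy to the connected component containing the cycle $g$ and then uses $\d(G)=\sup_i\d(X_i)$, as in the paper's conventions; this is immediate and does not affect the proof.
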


We recall some facts from \cite{RS}.

Let $G$ be an interval graph.

We say that $G$ has the $0$-\emph{intersection property} if for every three corresponding intervals
$I'$, $I''$ and $I'''$ to vertices in $G$ we have
$I'\cap I''\cap I''' = \emptyset$.

$G$ has the $(3/4)$-\emph{intersection property} if it does not have the $0$-intersection property
and for every four corresponding intervals
$I'$, $I''$, $I'''$ and $I''''$ to vertices in $G$ we have
$I'\cap I''\cap I''' = \emptyset$ or $I'\cap I''\cap I'''' = \emptyset$.

By a \emph{couple} of intervals in a cycle $C$ of $G$ we mean the union of two non-disjoint intervals whose corresponding vertices belong to $C$.
We say that $G$ has the $1$-\emph{intersection property} if it does not have the $0$ and $(3/4)$-intersection properties
and for every cycle $C$ in $G$ each interval and couple of corresponding intervals to vertices in $C$
are not disjoint.

Let $G$ be an interval graph.
We say that $G$ has the $(3/2)$-\emph{intersection property} if there exists two disjoint corresponding intervals
$I'$ and $I''$ to vertices in
a cycle $C$ in $G$ such that there is no interval $I$ (corresponding to a vertex in $G$) with
$I\cap I'\neq \emptyset$ and $I\cap I'' \neq \emptyset$.

\smallskip

\cite[Theorem 3.14]{RS} give the following result.

\begin{theorem} \label{034}
Every interval graph $G$ is hyperbolic and $\d(G) \in \{0,3/4,1,5/4,3/2\}$.
Furthermore,

\begin{itemize}
\item $\d(G) = 0$ if and only if $G$ has the $0$-intersection property.

\item $\d(G) = 3/4$ if and only if $G$ has the $(3/4)$-intersection property.

\item $\d(G) = 1$ if and only if $G$ has the $1$-intersection property.

\item $\d(G) = 5/4$ if and only if $G$ does not have the $0$, $3/4$, $1$ and $(3/2)$-intersection properties.

\item $\d(G) = 3/2$ if and only if $G$ has the $(3/2)$-intersection property.
\end{itemize}
\end{theorem}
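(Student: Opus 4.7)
The plan is to prove first the universal bound $\delta(G) \le 3/2$ and then to identify each of the five possible values of $\delta(G)$ with the corresponding intersection property. Throughout, the key tools are Theorem \ref{t:BRS}, which reduces $\delta(G)$ to the supremum of $\delta(T)$ over triangles $T \in \mathbb{T}_1$ and forces $\delta(G)$ to be a multiple of $1/4$; Theorem \ref{t:04}; Lemma \ref{p:c3}; and the Helly property in dimension one, which says any pairwise intersecting family of intervals has a common point and therefore lifts every clique in $G$ to a common point of the associated intervals.

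For the universal bound, I would fix $T = \{x_1, x_2, x_3\} \in \mathbb{T}_1$ and a point $p$ on $[x_1 x_2]$, translate each of the three geodesic sides into its corresponding sequence of overlapping intervals on the line, and use Helly on consecutive interval pairs to produce a vertex on $[x_2 x_3] \cup [x_1 x_3]$ within distance $3/2$ of $p$. Once this bound is in place, the two smallest cases follow from Theorem \ref{t:04}: since interval graphs are chordal, $G$ is a tree if and only if $G$ has no triangle, which by Helly is exactly the $0$-intersection property, yielding $\delta(G) = 0$; and $G$ is not a tree with every cycle of length $3$ if and only if $G$ contains a triangle but no $4$-cycle. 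The presence of a $4$-cycle $v_1v_2v_3v_4v_1$ (chordality forces a chord, producing two triangles sharing an edge) is equivalent via Helly to the existence of four intervals violating the $(3/4)$-property, so this case corresponds exactly to $\delta(G) = 3/4$.

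By Lemma \ref{p:c3}, the simultaneous failure of the $0$- and $(3/4)$-properties forces $\delta(G) \ge 1$, so the remaining analysis splits $\delta(G) \in \{1, 5/4, 3/2\}$. The $(3/2)$-property provides two disjoint intervals in a cycle $C$ with no common neighbor, from which I would construct an explicit triangle $T \in \mathbb{T}_1$ attaining $\delta(T) = 3/2$; conversely, applying Theorem \ref{t:BRS}, any triangle attaining $\delta(T) = 3/2$ must contain such a pair, giving back the $(3/2)$-property. The $1$-intersection property, which forbids any ``gap'' along a cycle, can be used through a short direct argument to cap the thinness of every triangle in $\mathbb{T}_1$ at $1$. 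When none of the four properties $0, 3/4, 1, (3/2)$ holds, an explicit construction produces a triangle of thinness exactly $5/4$, while the failure of the $(3/2)$-property rules out any higher value.

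The main obstacle is the intermediate case $\delta(G) = 5/4$: one must show simultaneously that the negation of all four other properties forces a triangle of thinness at least $5/4$, and that the negation of the $(3/2)$-property forbids thinness higher than $5/4$. Because $\delta(G)$ jumps only in quarters, this amounts to a delicate case-by-case examination of all possible configurations of overlapping intervals along a cycle, carefully distinguishing those that push $\delta$ past $1$ from those that push it all the way to $3/2$; the Helly property and a systematic bookkeeping of where geodesics can leave and re-enter the interval representation will be essential here.
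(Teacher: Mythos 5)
You should first note that the paper does not prove this statement at all: Theorem \ref{034} is quoted verbatim from \cite[Theorem 3.14]{RS} (the companion paper on interval graphs), so there is no internal proof to match your argument against. Your proposal therefore has to stand on its own, and as written it does not: it is an outline in which every step that is actually hard is replaced by a declaration of intent. The parts you do carry out are sound --- the Helly property correctly identifies the $0$-intersection property with triangle-freeness, chordality of interval graphs turns triangle-freeness into being a forest, and the two-triangles-sharing-an-edge translation correctly identifies the failure of the $(3/4)$-intersection property (given a triangle) with the existence of a $4$-cycle, so the cases $\d(G)=0$ and $\d(G)=3/4$ do follow from Theorem \ref{t:04} and Lemma \ref{p:c3}. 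The universal bound $\d(G)\le 3/2$ also needs no new Helly argument, since interval graphs are chordal and the paper already records $\d(G)\le 3/2$ for chordal graphs via \cite{BHB1}.

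The genuine gap is everything above $\d(G)=1$. You assert that the $(3/2)$-property yields a triangle of thinness $3/2$ ``from which I would construct,'' that the $1$-intersection property ``can be used through a short direct argument'' to cap thinness at $1$, and that the residual case produces thinness exactly $5/4$; none of these is exhibited, and you yourself flag the $5/4$ case as an unresolved obstacle requiring ``a delicate case-by-case examination.'' That examination \emph{is} the theorem: one must show that failure of the $(3/2)$-property forces every geodesic triangle in $\mathbb{T}_1$ to be $5/4$-thin (an upper-bound argument over all configurations of intervals along a cycle), and that failure of the $0$-, $(3/4)$-, $1$- and $(3/2)$-properties simultaneously forces a triangle of thinness at least $5/4$ (e.g.\ via Theorem \ref{t:delta2}, which you never invoke). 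You also leave unaddressed the logical bookkeeping that the five properties are mutually exclusive and exhaustive, and you only argue one direction of the biconditional for $\d(G)=1$ (the cap, not the converse ``$\d(G)=1$ implies the $1$-intersection property''). Until these constructions and upper-bound arguments are written out, the proposal establishes only the first two bullet points of the theorem.
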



It is well-known that the interval graphs $G$ with $\d(G) = 0$ are the caterpillar trees (the trees for which removing the leaves and incident edges produces a
path graph), see \cite{Ju}, but the characterization with the 0-intersection property is interesting, since it looks similar to the other intersection properties.

\smallskip

If $C$ is a cycle in $G$ and $v\in V(G)$, we denote by $\deg_C (v)$ the degree of the vertex $v$
in the subgraph $\Gamma$ induced by $V(C)$ (note that $\Gamma$ could contain edges that are not contained in $C$, and thus it is possible to have $\deg_C (v)>2$).

In \cite[Theorem 3.2]{BRRS} appears the following result.

\begin{theorem} \label{t:delta2}
Given any graph $G$, we have $\delta(G) \ge 5/4$ if and only if there exist a cycle
$g$ in $G$ with length $L(g) \ge 5$ and a vertex $w\in g$
such that $\deg_g(w)=2$.
\end{theorem}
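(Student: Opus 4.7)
My plan is to prove the two implications separately, using Theorem~\ref{t:BRS} as the main reduction tool.

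For $(\Leftarrow)$: Suppose $g = w_1 w_2 \cdots w_n w_1$ is a cycle in $G$ with $n \ge 5$ and $\deg_g(w_1) = 2$, chosen with $n$ minimal. The degree-$2$ hypothesis forces $w_1$'s only neighbors in $V(g)$ to be $w_2$ and $w_n$, giving $d_G(w_1, w_j) \ge 2$ for all $j \ne 1, 2, n$; writing $p$ for the antipode of $w_1$ on $g$, this forces $d_G(w_1, p) \ge 2$ (if $n$ is even) or $\ge 5/2$ (if $n$ is odd). The first step is to prove $d_G(w_1, p) = n/2$. If not, take a shortest $w_1$-to-$p$ path $P$ with $L(P) < n/2$, and let $y_1$ be the first vertex after $w_1$ on $P$; the degree hypothesis forces $y_1 \in \{w_2, w_n\}$ or $y_1 \notin V(g)$. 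Combining $P$ with whichever arc of $g$ does not initially follow $y_1$ yields a cycle $g'$ of length $L(P) + n/2 < n$, containing $w_1$, with $\deg_{g'}(w_1) = 2$: the shortest-path property of $P$ prevents any internal vertex of $P$ from being adjacent to $w_1$, and (when $y_1 \ne w_n$) prevents $w_n$ from appearing as an internal vertex of $P$. Since $L(g') \ge 5$, this contradicts the minimality of $n$.

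With $d_G(w_1, p) = n/2$, both arcs of $g$ from $w_1$ to $p$ are geodesics in $G$, so they form a geodesic bigon $T$. Let $p^*$ be the point on the arc through $w_2$ at distance $5/4$ from $w_1$; thus $p^*$ lies on the edge $w_2 w_3$ at parameter $1/4$ from $w_2$. Any path in $G$ from $p^*$ to a point $q$ on the opposite arc must leave the edge $w_2 w_3$ either through $w_2$, giving $d_G(p^*, q) \ge 1/4 + d_G(w_2, q) \ge 5/4$ (since $q \ne w_2$ and the opposite arc contains no edge incident to $w_2$), or through $w_3$, giving $d_G(p^*, q) \ge 3/4 + d_G(w_3, q) \ge 5/4$ (since points of the opposite arc lie at $G$-distance at least $1/2$ from $w_3$: for $n=5$ the opposite arc reaches $w_3$ no closer than the midpoint of edge $w_3 w_4$, and for $n \ge 6$ no edge of the opposite arc is incident to $w_3$). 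Hence $\delta(T) \ge 5/4$ and Theorem~\ref{t:BRS} gives $\delta(G) \ge 5/4$.

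For $(\Rightarrow)$: Assume $\delta(G) \ge 5/4$. By Theorem~\ref{t:BRS} there exists $T = [x_1 x_2] \cup [x_2 x_3] \cup [x_3 x_1] \in \mathbb{T}_1$ with $\delta(T) = \delta(G) \ge 5/4$; choose such $T$ with $L(T)$ minimal. The elementary estimate $\delta(T) \le L(T)/4$ (from $\min(d(p, x_i), d(p, x_{i+1})) \le d(x_i, x_{i+1})/2$ for $p \in [x_i x_{i+1}]$, combined with $\max_i d(x_i, x_{i+1}) \le L(T)/2$) forces $L(T) \ge 5$. Setting $g := T$, the task is to exhibit $w \in V(g)$ with $\deg_g(w) = 2$; if every vertex of $g$ were incident to a chord, one would use such a chord (chosen away from the point realizing $\delta(T)$) to construct a strictly shorter $T' \in \mathbb{T}_1$ still satisfying $\delta(T') \ge 5/4$, contradicting minimality of $L(T)$.

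The hardest step is the chord-based reduction in $(\Rightarrow)$: the chord must be chosen so that the thinness-realizing point of $T$ survives in $T'$, which requires a case analysis on the position of the chord relative to that point. The $(\Leftarrow)$ direction is cleaner, because the shortest-path property of $P$ automatically preserves $\deg_g(w_1) = 2$ through the reduction, after which the bigon estimate follows from a direct distance computation driven entirely by the degree-$2$ hypothesis at $w_1$.
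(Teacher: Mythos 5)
First, a point of context: the paper does not prove this theorem; it is quoted from \cite[Theorem 3.2]{BRRS}, so there is no in-paper proof to compare against and your argument has to stand on its own. Your $(\Leftarrow)$ direction is the stronger half: the final bigon estimate $d_G(p^*,\gamma_2)\ge 5/4$ is a correct computation driven by $\deg_g(w_1)=2$, and the inference $\delta(G)\ge 5/4$ from it is fine. The gap there is in the minimality reduction to $d_G(w_1,p)=n/2$: the union of the shortest path $P$ with the opposite arc of $g$ need not be a simple cycle. The shortest-path property only rules out internal vertices of $P$ that are adjacent to $w_1$; it does not prevent $P$ from passing through vertices of the chosen arc far from $w_1$, nor from reusing one of its edges. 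For instance, with $n=8$ and chords $w_2w_7$, $w_7w_5$, the path $P=w_1w_2w_7w_5$ meets the arc $w_5w_6w_7w_8w_1$ at $w_7$; with the single chord $w_2w_6$, the path $w_1w_2w_6w_5$ reuses the edge $w_5w_6$ of that arc. In such cases one can still extract a shorter qualifying cycle from $P\cup g$, but that extraction, together with the check that the extracted cycle has length at least $5$ and still contains a vertex of induced degree $2$, is precisely the content of the step and is not carried out.

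The $(\Rightarrow)$ direction contains the more serious gap. The reduction to a minimal-length $T\in\mathbb{T}_1$ with $\delta(T)=\delta(G)$ and the estimate $\delta(T)\le L(T)/4$ forcing $L(T)\ge 5$ are both correct. But the entire remaining burden --- showing that if every vertex of the cycle $T$ were incident to a chord, one could cut along a suitably chosen chord to obtain a strictly shorter $T'\in\mathbb{T}_1$ with $\delta(T')\ge 5/4$ --- is only asserted. Cutting $T$ along a chord $uv$ yields two shorter cycles, but neither need be a geodesic triangle with corners in $J(G)$, and the distance from the point $p$ realizing $\delta(T)$ to a new side containing the chord can fall below $5/4$ even when the chord's endpoints are far from $p$ \emph{along} $T$, because a chord is a shortcut exactly where distance along the cycle and distance in $G$ disagree. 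Your own closing remark concedes that this is ``the hardest step'' and that it ``requires a case analysis'' which is not supplied; as written, the forward implication is a plan rather than a proof, and it is not a routine verification but the core difficulty of the theorem.
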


The following result appears in \cite[Theorem 4.9]{HPR2}.

\begin{theorem} \label{t:HPR2}
If $G$ is a graph with $n$ vertices and minimum degree $n-3$, then $\d(G)\le 5/4$.
\end{theorem}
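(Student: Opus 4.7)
My plan is to argue by contradiction. Since $\delta(G)$ is always a multiple of $1/4$ by Theorem \ref{t:BRS}, it suffices to rule out $\delta(G)\ge 3/2$. The case $n\le 4$ is immediate from Theorem \ref{t:delta2}, since every cycle then has length at most $4$; henceforth I assume $n\ge 5$.

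The first step is a diameter bound. Each vertex has at most two non-neighbors, so by inclusion--exclusion any two vertices share at least $n-4\ge 1$ common neighbor; hence $\diam V(G)\le 2$ and $\diam G\le 3$. Lemma \ref{dddd} then gives $\delta(G)\le 3/2$, and the task reduces to excluding equality.

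Assuming $\delta(G)=3/2$, Theorem \ref{t:BRS} furnishes a cycle-shaped geodesic triangle $T=\{x,y,z\}\in\mathbb{T}_1$ with $\delta(T)=3/2$. By compactness I may pick a point $p$ on some side, say $\gamma_1=[xy]$, with $d(p,\gamma_2\cup\gamma_3)=3/2$; since $x,y$ lie on $\gamma_2\cup\gamma_3$, this forces $L(\gamma_1)=3$ and makes $p$ the midpoint. The diameter bound then forces both $x$ and $y$ to be midpoints of edges (if either were a vertex, $d(x,y)\le 5/2<3$), say of edges $x_1x_2$ and $y_1y_2$ respectively. The identity $d(x,y)=3$ yields $d(x_i,y_j)=2$ for every $i,j$, so, in view of the degree hypothesis, each $x_i$ uses its entire quota of non-neighbors on $\{y_1,y_2\}$, and symmetrically for the $y_j$.

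The geodesic $\gamma_1$ then takes the form $x\to x_i\to w\to y_j\to y$ for some $i,j$ and some common neighbor $w$ of $x_i$ and $y_j$, with midpoint $p=w\in V(G)$. The key step — and the one I expect to require the most care to justify cleanly — is to exhibit a vertex on $\gamma_3$ too close to $p$. The cycle property of $T$ forces $\gamma_3$ to approach $x$ along the opposite half-edge, hence through the vertex $x_{i'}$ with $i'=3-i$. Since the non-neighbors of $x_{i'}$ are exactly $\{y_1,y_2\}$, and $w\notin\{x_1,x_2,y_1,y_2\}$ (any such identification would make an edge of $\gamma_1$ a non-edge of $G$), the vertices $w$ and $x_{i'}$ are adjacent. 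Thus $d(p,x_{i'})=1<3/2$, contradicting $d(p,\gamma_2\cup\gamma_3)=3/2$ and completing the argument.
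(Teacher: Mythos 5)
The paper does not actually prove this statement; it is imported from \cite[Theorem 4.9]{HPR2} as a black box, so there is no internal proof to compare against. Your argument is correct and self-contained, using only tools already quoted in the paper (Theorems \ref{t:BRS} and \ref{t:delta2} and Lemma \ref{dddd}). The key computations all check out: for \emph{non-adjacent} $u,v$ each has at most one further non-neighbor among the remaining $n-2$ vertices, giving at least $n-4\ge 1$ common neighbors and hence $\diam V(G)\le 2$, $\diam G\le 3$, $\d(G)\le 3/2$ (your phrase ``any two vertices'' is slightly loose, since for adjacent pairs the count is only $n-6$, but only the non-adjacent case is needed). In the extremal configuration, $d(x_i,y_j)=2$ for all $i,j$ forces each $x_i$ to be non-adjacent to exactly $y_1,y_2$ and adjacent to everything else, so the midpoint $w=p$ of $[xy]$ (which you correctly rule out of $\{x_1,x_2,y_1,y_2\}$ by distance considerations) is adjacent to $x_{i'}$. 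The one step you flag as delicate --- that $x_{i'}$ lies on $[xz]\cup[yz]$ --- is fine: since $T\in\mathbb{T}_1$ is a cycle passing through the interior point $x$ of the edge $x_1x_2$, it contains the whole edge, and $x_{i'}$ cannot lie on $[xy]$ (its only vertices are $x_i$, $w$, $y_j$, and $x_{i'}$ differs from each by the adjacency/distance relations), so $x_{i'}\in[xz]\cup[yz]$ and $d(p,[xz]\cup[yz])\le 1<3/2$, the desired contradiction. The degenerate possibility $z=x$ (a bigon) does not disturb this, since only membership of $x_{i'}$ in the union of the other two sides is used.
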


The following result in \cite[Theorem 2.2]{BRST} gives a sharp bound for the hyperbolicity constant of the complement of a graph.

\begin{theorem} \label{t:comple00}
If $G$ is a graph with $\diam(V(G)) \ge 3$, then its complement graph $\overline{G}$ satisfies $0 \le \d(\overline{G}) \le 2$.
\end{theorem}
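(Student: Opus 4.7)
The plan is to bound $\diam \overline{G}$ and then invoke Lemma \ref{dddd}. The lower bound $\d(\overline{G})\ge 0$ is trivial, so only the upper bound requires work.

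First I would pick two vertices $u,v\in V(G)$ that realize (or at least witness) the diameter hypothesis, namely $d_G(u,v)\ge 3$. Two facts follow immediately. Since $u$ and $v$ are not adjacent in $G$, they are adjacent in $\overline{G}$, so $d_{\overline{G}}(u,v)=1$. Moreover, any third vertex $w\in V(G)\setminus\{u,v\}$ cannot be adjacent in $G$ to both $u$ and $v$, because then $u\,w\,v$ would be a path of length $2$ in $G$, contradicting $d_G(u,v)\ge 3$. Hence $w$ is non-adjacent in $G$ to at least one of $u,v$, i.e.\ adjacent in $\overline{G}$ to at least one of them. In other words, $\{u,v\}$ is a dominating set of $\overline{G}$ at vertex-distance at most $1$.

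Next I would use this to bound the vertex-diameter of $\overline{G}$. For any $x,y\in V(\overline{G})$, the previous paragraph gives vertices $p_x,p_y\in\{u,v\}$ with $d_{\overline{G}}(x,p_x)\le 1$ and $d_{\overline{G}}(y,p_y)\le 1$; combined with $d_{\overline{G}}(p_x,p_y)\le 1$ (they are either equal or joined by the edge $uv$) the triangle inequality yields $d_{\overline{G}}(x,y)\le 3$. Therefore $\diam V(\overline{G})\le 3$, and since any point in the interior of an edge is within distance $1/2$ of one of its endpoints, we obtain
\begin{equation*}
\diam \overline{G}\;\le\;\diam V(\overline{G})+1\;\le\;4.
\end{equation*}
Applying Lemma \ref{dddd} then yields $\d(\overline{G})\le\tfrac12\diam\overline{G}\le 2$, which closes the argument together with the trivial lower bound.

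There is no real obstacle here: the only subtlety is recognizing that the hypothesis $\diam V(G)\ge 3$ is exactly what forces $\{u,v\}$ to dominate $V(\overline{G})$ in $\overline{G}$, which in turn controls the diameter. Once that observation is made, the bound $\d(\overline{G})\le 2$ falls out of the general inequality $\d\le\tfrac12\diam$.
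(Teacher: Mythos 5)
Your proof is correct: the hypothesis $\diam V(G)\ge 3$ gives a pair $u,v$ that dominates $\overline{G}$ and is adjacent there, so $\diam V(\overline{G})\le 3$, $\diam\overline{G}\le 4$, and Lemma \ref{dddd} gives $\d(\overline{G})\le 2$. Note that this paper does not prove Theorem \ref{t:comple00} itself but quotes it from \cite[Theorem 2.2]{BRST}; your argument is exactly the standard one used there, so there is nothing further to compare.
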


Finally, we will need the following result in \cite[Theorem 4.8]{RS} that improves Theorem \ref{t:comple00} for interval graphs
(recall that the most difficult case in the study of the complement of a graph is the set of graphs $G$ with $\diam V(G) = 2$).

\begin{theorem} \label{t:comple0}
If $G$ is an interval graph, then $0 \le \d(\overline{G}) \le 2$.
\end{theorem}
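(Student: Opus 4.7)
The bound $\d(\overline G)\ge 0$ is immediate. For the upper bound I split by cases on $\diam V(G)$. If $\diam V(G)\le 1$, then $G$ is complete and $\overline G$ has no edges, so $\d(\overline G)=0$. If $G$ is disconnected, any two vertices in different components of $G$ are adjacent in $\overline G$, so $\diam V(\overline G)\le 2$ and Lemma \ref{dddd} yields $\d(\overline G)\le 3/2$. If $G$ is connected with $\diam V(G)\ge 3$, Theorem \ref{t:comple00} gives $\d(\overline G)\le 2$ directly. The remaining and non-trivial case is that $G$ is a connected interval graph with $\diam V(G)=2$, and this is where the interval-graph structure must be used.

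\textbf{Reduction to a vertex-diameter bound.} Fix an interval representation $\{I_v=[a_v,b_v]\}_{v\in V(G)}$ of $G$. I will show that every connected component $H$ of $\overline G$ satisfies $\diam V(H)\le 3$. Granting this, $\diam H\le \diam V(H)+1\le 4$, so Lemma \ref{dddd} yields $\d(H)\le 2$, and Proposition \ref{t:Tdec} gives $\d(\overline G)=\sup_{H}\d(H)\le 2$. To prove the claim, assume for contradiction that $u,v\in H$ satisfy $d_{\overline G}(u,v)\ge 4$. Three consequences follow immediately: $uv\in E(G)$ (so $I_u\cap I_v\ne\emptyset$); the pair $\{u,v\}$ dominates $V(G)$ in $G$ (otherwise a non-dominated vertex would be a common $\overline G$-neighbour of $u$ and $v$, giving distance $\le 2$); and there is no $\overline G$-path $u{-}a{-}b{-}v$ of length $3$.

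\textbf{Case analysis and main obstacle.} WLOG $a_u\le a_v$, so $a_v\le b_u$. If $I_u\subseteq I_v$ or $I_v\subseteq I_u$ (nested case), say $I_u\subseteq I_v$: every interval disjoint from $I_v$ is also disjoint from $I_u$, so the dominating hypothesis forces every vertex to meet $I_v$; hence $v$ is universal in $G$ and isolated in $\overline G$, contradicting $u,v\in H$ with $d_{\overline G}(u,v)\ge 4$. Otherwise $a_u<a_v\le b_u<b_v$ (proper overlap). Since $u$ is not isolated in $\overline G$, pick $a$ with $I_a\cap I_u=\emptyset$; the option $b_a<a_u$ is ruled out by the dominating hypothesis (it would force $I_a\cap I_v=\emptyset$ as well), so $a_a>b_u$, and meeting $I_v$ then forces $a_a\in(b_u,b_v]$. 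Symmetrically, non-isolation of $v$ produces $b$ with $b_b\in[a_u,a_v)$. The intervals $I_a$ and $I_b$ lie on opposite sides of the point $b_u$, hence $I_a\cap I_b=\emptyset$, so $u{-}a{-}b{-}v$ is a $\overline G$-path of length $3$, a contradiction. The main delicate point is orchestrating the case analysis so that the dominating hypothesis combined with the non-isolation of $u$ and $v$ in $\overline G$ simultaneously forces the two witnessing intervals $I_a, I_b$ to exist in the required regions; once their left and right endpoints are located on the appropriate sides of $b_u$, the one-dimensional geometry makes them automatically disjoint.
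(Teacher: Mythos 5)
The paper does not actually prove this statement: it imports it verbatim from \cite[Theorem 4.8]{RS}, so there is no in-paper proof to compare against. Your argument is, as far as I can check, a correct and self-contained proof. The easy reductions (complete $G$, disconnected $G$, $\diam V(G)\ge 3$ via Theorem \ref{t:comple00}) are all fine, and the core of your proof --- that two vertices $u,v$ of $\overline{G}$ at $\overline{G}$-distance at least $4$ would have to be $G$-adjacent, dominate $V(G)$, and admit no $\overline{G}$-path of length $3$, which the one-dimensional geometry of intervals then contradicts --- is sound: in the nested case domination makes one of $u,v$ universal in $G$ and hence isolated in $\overline{G}$, and in the properly overlapping case the non-isolation of $u$ and $v$ in $\overline{G}$ together with domination forces witnesses $I_a$ strictly to the right of $b_u$ and $I_b$ strictly to the left of $a_v\le b_u$, so $I_a\cap I_b=\emptyset$ and $u\,a\,b\,v$ is a $\overline{G}$-walk of length $3$. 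In fact you prove the stronger and sharp structural fact that every connected component of $\overline{G}$ has vertex-diameter at most $3$ (attained, e.g., by $I_u=[0,2]$, $I_v=[1,3]$, $I_a=[2.5,4]$, $I_b=[-1,0.5]$), from which $\d(\overline{G})\le 2$ follows by Lemma \ref{dddd}; note also that this part of your argument never uses $\diam V(G)=2$, so the preliminary case split is dispensable. The only caveat worth recording is that you fix a representation by closed bounded intervals; since the paper allows infinite graphs and arbitrary interval types, one should either note that the endpoint comparisons go through verbatim with infima/suprema in the extended reals, or restrict to the standard closed-interval convention --- a purely cosmetic issue that does not affect the validity of the argument.
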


\section{Circular-arc graphs and hyperbolicity}

The parameter $\varrho(G)$ plays an important role in the study of the hyperbolicity of circular-arc graphs, as the following result shows.
Recall that $\lfloor t \rfloor$ denotes the lower integer part of the real number $t$, i.e., the greatest integer least than or equal to $t$.

Since any NI circular-arc graph is a bounded set, we have that it is hyperbolic.
The following result provides sharp inequalities for the hyperbolicity constant of any circular-arc graph.

\begin{theorem} \label{t:main}
Let $G$ be a circular-arc graph.
If $\varrho(G)\neq 1,2,$ then $G$ satisfies the sharp inequalities
$$
\frac14\, \varrho(G) \le \d(G)\le \frac12 \Big\lfloor \frac12\, \varrho(G) \Big\rfloor + \, \frac32\, .
$$
If $\varrho(G) = 1$, then $G$ satisfies the sharp inequalities
$$
0 \le \d(G)\le \frac32\, .
$$
If $\varrho(G) = 2$, then $G$ satisfies the sharp inequalities
$$
0 \le \d(G)\le 2 .
$$
\end{theorem}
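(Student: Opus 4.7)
My plan is to build everything from a minimum total set $K = \{v_1,\dots,v_r\}$ realizing $r = \varrho(G)$, handle $\varrho\in\{1,2\}$ as short special cases, and do the main work for $\varrho\ge 3$. By minimality each $l$ has a \emph{private point} $p_l \in I_l \setminus \bigcup_{l'\neq l} I_{l'}$, and I would relabel so that $p_1,\dots,p_r$ appear in cyclic order on $\SS$. Since any arc $I_{l'}$ with $l'\neq l, l+1$ is confined to the open arc $(p_{l'-1},p_{l'+1})$ and so cannot reach into $(p_l,p_{l+1})$, the consecutive arcs $I_l,I_{l+1}$ must overlap (to cover this region), while $I_l$ and $I_m$ are disjoint whenever $|l-m|\ge 2$ cyclically. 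Hence $K$ induces the cycle $C_r$ in $G$.

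The upper bound in the main case is a diameter estimate: each $v\in V(G)$ is adjacent to some $v_l\in K$ (its arc hits some $I_l$), so $\diam V(G)\le 2+\lfloor r/2\rfloor$ and $\diam G\le \lfloor r/2\rfloor+3$; Lemma \ref{dddd} then delivers $\d(G)\le \tfrac12\lfloor r/2\rfloor+\tfrac32$. The special cases are handled by the same template: for $\varrho=1$ some arc equals all of $\SS$, so that vertex is universal and $\diam V\le 2$; for $\varrho=2$ the two covering arcs are adjacent and together dominate $V(G)$, giving $\diam V\le 3$. These yield the bounds $3/2$ and $2$ respectively.

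For the lower bound when $\varrho\ge 3$ my plan is to show that $C_r$ is isometric in $G$ and then invoke Lemmas \ref{lem_LX} and \ref{cyclegarph} to conclude $\d(G)\ge r/4$. The decisive input is a pair of properties of $\phi(u):=\{l : J_u\cap I_l\neq \emptyset\}$ for $u\in V(G)$: (i) $\phi(u)$ is a cyclic interval (a connectivity consequence of $J_u$ being a connected sub-arc of $\SS$), and (ii) $|\phi(u)|\le 3$. The size bound uses minimality decisively: if $|\phi(u)|=k\ge 4$ with $\phi(u)=\{l_0,\dots,l_0+k-1\}$, then $J_u$, being connected and disjoint from $I_{l_0-1}$ and $I_{l_0+k}$, must traverse the short arc from $I_{l_0}$ to $I_{l_0+k-1}$, and so contains $p_{l_0+1},\dots,p_{l_0+k-2}$; consequently $(K\setminus\{v_{l_0+1},\dots,v_{l_0+k-2}\})\cup\{u\}$ is a total set of size $r-k+3<r$, contradicting $\varrho(G)=r$.

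With (i) and (ii) in hand I would project any shortest path $v_i=u_0,u_1,\dots,u_s=v_j$ onto the cycle by picking $m_t\in \phi(u_{t-1})\cap \phi(u_t)$ (nonempty, since $J_{u_{t-1}}\cap J_{u_t}$ lies in some $I_{m_t}$), so that successive $m_t,m_{t+1}$ both lie in the size-$\le 3$ interval $\phi(u_t)$ and hence are at cyclic distance at most $2$, with the endpoint constraints $m_0=i$ and $m_s=j$ holding since $u_0=v_i$ and $u_s=v_j$ force $\phi(u_0)=\{i-1,i,i+1\}$ and $\phi(u_s)=\{j-1,j,j+1\}$. The main obstacle is upgrading this combinatorial bookkeeping from the weaker inequality $s\ge d_{C_r}(v_i,v_j)/2$ that naive step-counting produces to the sharp $s\ge d_{C_r}(v_i,v_j)$ needed for isometricity; I expect this to follow from a finer analysis of the chain of $J_{u_t}$'s, showing that a ``double step'' (i.e., $m_t$ and $m_{t+1}$ at opposite ends of $\phi(u_t)$) propagated along the path forces overlap patterns among consecutive $J_{u_t}$'s that conflict with the disjointness of non-consecutive $I_l,I_m$ established above. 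Sharpness is witnessed by the explicit example $G=C_r$ realized as a circular-arc graph with arcs of angular length slightly less than $4\pi/r$ (the subsequent Theorem \ref{t:l4} will provide a more general sufficient condition for equality in the lower bound), while the upper bound is attained by $C_r$ together with an appropriate interval-graph pendant.
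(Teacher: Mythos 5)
Your overall architecture is the paper's: minimum total set $K$, the same diameter estimate via Lemma \ref{dddd} for the upper bounds (including $\varrho(G)=1,2$), and for the lower bound the claim that $K$ induces an isometric copy of $C_{\varrho(G)}$, to be combined with Lemmas \ref{lem_LX} and \ref{cyclegarph}. The decisive gap is the one you flag yourself: the isometricity of the induced cycle. Your local bookkeeping via $\phi(u)$ genuinely only yields $s\ge d_{C_r}(v_i,v_j)/2$, and the hoped-for exclusion of ``double steps'' cannot be done edge by edge: for instance an edge $uw$ with $\phi(u)=\{1,2\}$ and $\phi(w)=\{2,3,4\}$ violates no local constraint, so a step can advance the projection by $2$. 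A clean way to close the gap is global rather than local: for a path $v_i=u_0,\dots,u_s=v_j$, the union $\bigcup_t J_{u_t}$ is a connected arc containing the private points $p_i$ and $p_j$, hence contains one of the two arcs of $\SS$ they determine, say the one through $p_{i+1},\dots,p_{j-1}$ (at cyclic distance $d^+$); then $\{v_j,v_{j+1},\dots,v_i\}\cup\{u_1,\dots,u_{s-1}\}$ is a total set of size $r-d^++s$, and minimality of $r=\varrho(G)$ forces $s\ge d^+\ge d_{C_r}(v_i,v_j)$. (The paper simply asserts isometricity from the definition of $\varrho(G)$; but your write-up explicitly records the factor-of-two loss and does not repair it, so as a proof it is incomplete at its central point.)

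There is a second gap: the theorem claims the upper bound is \emph{sharp}, and your proposed witness --- $C_r$ with an interval-graph pendant --- cannot attain it: by Proposition \ref{t:Tdec} such a graph has $\d=\max\{r/4,\d(\text{pendant})\}\le\max\{r/4,3/2\}$, far below $\frac12\lfloor \frac12 r\rfloor+\frac32$. The paper's extremal example is a genuinely different construction: a second long cycle of short arcs running ``parallel'' to the covering cycle, subdivided unevenly near two antipodal arcs so that a geodesic bigon of length $\varrho/2+3$ exists whose midpoint is at distance $\varrho/4+3/2$ from the other side; separate explicit examples are needed for $\varrho(G)=1$ (the wheel $W_7$) and $\varrho(G)=2$. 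Finally, the case $\varrho(G)=0$, which falls under ``$\varrho(G)\neq 1,2$'', is not addressed in your plan; it requires the interval-graph result quoted as Theorem \ref{034}.
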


\begin{proof}
The result is known if $G$ is an interval graph (i.e., if $\varrho(G) =0$), see \cite[Corollary 4.1]{RS}.

Assume now that $\varrho(G) \ge 1$.
Let us prove the upper bound of $\d(G)$.
Fix any set of vertices $K=\{v_1,\dots,v_{\varrho(G)}\}$ and corresponding arcs $\{I_1,\dots,I_{\varrho(G)}\}$ with $I_1\cup \cdots \cup I_{\varrho(G)} = \SS$.
Thus, every arc in $\SS$ intersects some arc in $\{I_1,\dots,I_{\varrho(G)}\}$.
Hence,
$$
\begin{aligned}
\diam V(G)
& \le 1  + \diam K + 1 = \Big\lfloor \frac12\, \varrho(G) \Big\rfloor + 2,
\\
\diam G
& \le \frac12  + \diam V(G) + \frac12 \le \Big\lfloor \frac12\, \varrho(G) \Big\rfloor + 3,
\end{aligned}
$$
and Lemma \ref{dddd} gives the upper bound.

Let us prove now that this bound is sharp.
Given $\th_1< \th_2$, denote by $[e^{i\th_1},e^{i\th_2}]$ the arc
$$
[e^{i\th_1},e^{i\th_2}] := \big\{ e^{i\th}\, |\; \th \in [\th_1,\th_2]\big\}.
$$
Fix any even integer $\varrho \ge 6$ with $\varrho \equiv 2 \,(\!\!\!\mod 4)$
and consider the family of arcs
$$
\big\{ [e^{2\pi i(j-1)/ \varrho},e^{2\pi ij/ \varrho}]\big\}_{j=1}^{\varrho}
$$
Denote by $I_j$ the arc $ [e^{2\pi i(j-1)/ \varrho},e^{2\pi ij/ \varrho}] $.
Let $z_1,z_2,z_3$ be the points $e^{\pi i/ (2\varrho)},e^{2\pi i/ (2\varrho)},e^{3\pi i/ (2\varrho)}$ in $I_1$, respectively,
$z_4,z_5$ the points $e^{2\pi i/ \varrho+2\pi i/ (3\varrho)},e^{2\pi i/ \varrho+4\pi i/ (3\varrho)}$ in $I_2$, respectively,
and $z_j$ the midpoint of $I_{j-3}$ with $6 \leq j \leq \varrho /2 +3$.
Let $z_{\varrho /2 +4},z_{\varrho /2 +5},z_{\varrho /2 +6}$ be the points $-e^{\pi i/ (2\varrho)},-e^{2\pi i/ (2\varrho)},-e^{3\pi i/ (2\varrho)}$ in $I_{\varrho /2 + 1}$, respectively,
$z_{\varrho /2 +7},z_{\varrho /2 +8}$ the points $-e^{2\pi i/ \varrho+2\pi i/ (3\varrho)},-e^{2\pi i/ \varrho+4\pi i/ (3\varrho)}$ in $I_{\varrho /2 + 2}$, respectively,
and $z_k$ the midpoint of $I_{k-6}$ with $\varrho/2 + 9 \leq k \le \varrho + 6$.

Consider the circular-arc graph $G_\varrho$ defined as the intersection graph of the family of arcs
$$
\big\{ [e^{2\pi i(j-1)/ \varrho},e^{2\pi ij/ \varrho}]\big\}_{j=1}^{\varrho}
\cup \big\{ [z_j,z_{j+1}]\big\}_{j=1}^{\varrho+6}
\cup \big\{ [z_{\varrho+6},z_1]\big\}.
$$
Let $x$ (respectively, $y$) be the midpoint of the edge of $G_\varrho$ with endpoints corresponding to the arcs
$[z_1,z_2]$ and $[z_2,z_3]$
(respectively, $[z_{\varrho/2+4},z_{\varrho/2+5}]$ and $[z_{\varrho/2+5},z_{\varrho/2+6}]$).

We have $d_{G_\varrho}(x,y)= 3/2+ \varrho/2 + 3/2 = \varrho/2 +3$.
Let $\g_1$ and $\g_2$ be two geodesics in $G_\varrho$ joining $x$ and $y$ such that
$\g_1$ contains the corresponding vertices to the arcs
$\big\{ [z_j,z_{j+1}]\big\}_{j=2}^{\varrho/2+4}$
and $\g_2$ contains the corresponding vertices to the arcs
$\big\{ [z_j,z_{j+1}]\big\}_{j=\varrho/2+5}^{\varrho+6} \cup \big\{ [z_{\varrho+6},z_1]   \big\}  \cup \big\{ [z_1,z_2]   \big\}$.
Consider the geodesic bigon $\{\g_1,\g_2\}$.
If $p$ is the midpoint of $\g_1$, then
$d_{G_\varrho}(p,\g_2)= d_{G_\varrho}(x,y)/2=\varrho/4 + 3/2$.
Hence, $\varrho/4 + 3/2 = d_{G_\varrho}(p,\g_2)\le \d(G_\varrho) \le \varrho/4 + 3/2$, and we conclude $\d(G_\varrho) = \varrho/4 + 3/2$.

If $\varrho(G) = 1$, then this upper bound is also attained.
The wheel graph with seven vertices $W_7$ is a circular-arc graph with $\varrho(G)= 1$, and \cite[Theorem 11]{RSVV} gives that $\d(W_7) = 3/2$.

If $\varrho(G) = 2$, then this upper bound is attained by the circular-arc graph $G$ corresponding to the arcs
$$
\begin{aligned}
& [e^{0i},e^{\pi i}] \cup [e^{\pi i},e^{2\pi i}]
\cup [e^{0i},e^{\pi i/4}] \cup [e^{\pi i/4},e^{\pi i/2}]\cup [e^{\pi i/2},e^{3\pi i/4}] \cup [e^{3\pi i/4},e^{\pi i}]
\\
& \qquad \qquad \qquad \qquad \;\; \cup [e^{\pi i},e^{5\pi i/4}] \cup [e^{5\pi i/4},e^{3\pi i/2}]\cup [e^{3\pi i/2},e^{7\pi i/4}] \cup [e^{7\pi i/4},e^{2\pi i}].
\end{aligned}
$$
In order to prove $\d(G)=2$, let $x$ (respectively, $y$) be the midpoint of the edge in $G$
with endpoints $[e^{\pi i/4},e^{\pi i/2}]$ and $[e^{\pi i/2},e^{3\pi i/4}]$
(respectively, $[e^{5\pi i/4},e^{3\pi i/2}]$ and $[e^{3\pi i/2},e^{7\pi i/4}]$).
One can check that there are two geodesics $\g_1$ and $\g_2$ such that the midpoint $p$ of $\g_1$ satisfies $d_{G}(p,\g_2)=2$.
If we consider the geodesic bigon $\{\g_1,\g_2\}$, then $2 = d_{G}(p,\g_2)\le \d(G) \le 2$, and we conclude $\d(G) = 2$.

In order to prove the lower bound of $\d(G)$, we deal first with the case $\varrho(G)\ge 3$.
As  above, fix any set of vertices $K=\{v_1,\dots,v_{\varrho(G)}\}$ and corresponding arcs
$\{I_1,\dots,I_{\varrho(G)}\}$ with $I_1\cup \cdots \cup I_{\varrho(G)} = \SS$.
The definition of $\varrho(G)$ gives that the subgraph $\G_K$ of $G$ induced by $K$ is an isometric subgraph of $G$.
Since $\varrho(G)\ge 3$, the subgraph $\G_K$ is isomorphic to the cycle graph $C_{\varrho(G)}$.
Therefore, Lemmas \ref{lem_LX} and \ref{cyclegarph} give
$\d(G) \ge \d(\G_K) = \varrho(G)/4$.

Any circular-arc graph isomorphic to the cycle graph $C_\varrho$ attains this lower bound.

Finally, the lower bounds for the cases $\varrho(G) = 1, 2$ are trivial, and they are attained by the graphs $G_1$ with just a vertex and $G_2$ with just an edge, respectively.
\end{proof}

An important subset of circular-arc graphs are proper circular-arc graphs.
A circular-arc graph $G$ is said \emph{proper} if there is a representation of $G$ where none of the arcs contains another.
The following result improves Theorem \ref{t:main} for this kind of graphs.

\begin{theorem} \label{t:main2}
Let $G$ be a proper circular-arc graph.
If $\varrho(G)\geq 3$, then $G$ satisfies the sharp inequalities
$$
\frac14\, \varrho(G) \le \d(G)\le \frac12 \Big\lfloor \frac12\, \varrho(G) \Big\rfloor + 1 .
$$
If $\varrho(G) = 1$, then $\d(G)=0$.
If $\varrho(G) = 2$, then $G$ satisfies the sharp inequalities $0 \le \d(G)\le 5/4$.
\end{theorem}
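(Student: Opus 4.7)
The plan is to split by $\varrho(G) \in \{1, 2\}$ versus $\varrho(G) \geq 3$, based on a common structural lemma which I would establish first: for any proper circular-arc graph $G$ with minimum total set $K = \{v_1, \ldots, v_{\varrho(G)}\}$ (with arcs $I_1, \ldots, I_{\varrho(G)}$ arranged cyclically), every vertex $v \in V(G)$ is adjacent in $G$ to some two consecutive vertices $v_j, v_{j+1}$ of $K$. Properness forces non-consecutive arcs $I_j, I_{j+2}$ to be disjoint in a minimum total set (otherwise $I_{j+1}$ would be redundant), so if $I_v$ met three consecutive arcs $I_{j-1}, I_j, I_{j+1}$, the connectedness of $I_v$ together with this disjointness would force $I_v \supseteq I_j$, contradicting properness. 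The edge case where $I_v$ meets a single $I_j$ forces $I_v = I_j$ by properness, and then $v$ inherits the adjacencies of $v_j$.

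For $\varrho(G) \geq 3$, the lower bound $\delta(G) \geq \varrho(G)/4$ follows directly from the argument in Theorem \ref{t:main}, since the subgraph induced by $K$ is isometric in $G$ and isomorphic to $C_{\varrho(G)}$. For the upper bound, the properness lemma lets me bound distances in $V(G)$: given $u, w \in V(G)$, pick consecutive $v_j, v_{j+1}$ adjacent to $u$ and consecutive $v_k, v_{k+1}$ adjacent to $w$, which yields
\[
d_G(u, w) \;\leq\; 2 \,+\, \min\bigl\{d_K(v_a, v_b) : a \in \{j, j+1\},\, b \in \{k, k+1\}\bigr\}.
\]
A direct calculation on $C_{\varrho(G)}$ shows that this four-way minimum is bounded by $\lfloor \varrho(G)/2 \rfloor - 1$, giving $\diam V(G) \leq \lfloor \varrho(G)/2 \rfloor + 1$ and hence $\diam G \leq \lfloor \varrho(G)/2 \rfloor + 2$; Lemma \ref{dddd} then completes the bound. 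Sharpness of the lower bound is witnessed by $C_\varrho$ itself, which is a proper circular-arc graph with $\varrho = \varrho(C_\varrho)$; for the upper bound, I would adapt the family $G_\varrho$ from the proof of Theorem \ref{t:main}, replacing its long chords by duplicates of the cycle arcs so that properness is preserved while the diameter realised by the chosen bigon still attains the claimed bound.

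For $\varrho(G) = 1$, the arc $I_1 = \SS$ contains every other possible arc, so properness forces $G$ to be a single vertex and $\delta(G) = 0$. For $\varrho(G) = 2$, the properness lemma makes $v_1, v_2$ universal vertices of $G$, so $G = K_2 + H$ with $H = G \setminus \{v_1, v_2\}$; a further analysis of the representation partitions $V(H)$ into two arc-types (``type~A'' and ``type~B'') each of which forms a clique, with only some cross-type edges. Consequently $\diam V(G) \leq 2$, and every induced cycle of $G$ has length at most $4$: an induced cycle through $v_1$ or $v_2$ is forced to be a triangle by universality, and an induced cycle inside $H$ can use at most two vertices from each clique. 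I would then apply Theorem \ref{t:BRS} to reduce to geodesic triangles $T \in \mathbb{T}_1$ and use the universality of $v_1, v_2$ as a source of short-circuits of length at most $3/2$; a finite case analysis by the position of the vertices of $T$ relative to $\{v_1, v_2\}$ rules out the value $\delta(G) = 3/2$ and yields $\delta(G) \leq 5/4$. Sharpness of the lower bound $0$ is given by $K_2$ (or any tree with two arcs covering $\SS$); for the upper bound, I would exhibit an explicit proper $\varrho = 2$ graph whose type-A/type-B overlap produces a $5$-cycle with a degree-$2$ vertex in the induced subgraph, forcing $\delta \geq 5/4$ via Theorem \ref{t:delta2}.

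The main obstacle will be the $\varrho(G) = 2$ upper bound: the naive diameter estimate only yields $\delta \leq 3/2$, and improving this to $5/4$ rests on exploiting the two-clique structure of $H$ through a non-trivial case analysis of geodesic triangles in $\mathbb{T}_1$. Constructing a sharp example for the upper bound in the regime $\varrho(G) \geq 3$ is the second delicate point, since the thick-cycle family used in Theorem \ref{t:main} breaks properness and must be suitably modified.
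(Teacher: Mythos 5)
Your proposal follows essentially the same route as the paper: the lower bounds are inherited from Theorem \ref{t:main}, the upper bound for $\varrho(G)\ge 3$ comes from the observation that properness forces every arc to meet two \emph{consecutive} arcs of a minimum total set (so $\diam V(G)\le \lfloor\varrho(G)/2\rfloor+1$), and the $\varrho(G)=2$ case rests on the fact that every non-$K$ arc contains exactly one of the two components $\Lambda,\Lambda'$ of $I_1\cap I_2$, splitting $V(G)\setminus K$ into two cliques. The one step you defer -- ruling out $\d(G)=3/2$ when $\varrho(G)=2$ -- is closed in the paper not by locating the triangle's vertices relative to $v_1,v_2$ but by looking at the point $p$ realizing $d_G(p,[xz]\cup[yz])=3/2$: the diameter bounds force $p\in V(G)$ and $x$ to be the midpoint of an edge $u_1u_2$ with both arcs containing the same component $\Lambda$, so $I_p$ contains $\Lambda$ or $\Lambda'$ and hence $p$ is adjacent to \emph{both} endpoints of $u_1u_2$ (or of the edge at $y$), one of which necessarily lies on $[xz]\cup[yz]$, giving $3/2\le 1$; your two-clique structure yields exactly this, so your plan goes through.
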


\begin{proof}
Assume first that $\varrho(G) =1$.
Since $\SS$ is a corresponding arc to a vertex of $G$
and $G$ is a proper circular-arc graph, we have that it has just a vertex, and $\d(G)=0$.

Assume now that $\varrho(G) \ge 2$.
The lower bounds are a consequence of Theorem \ref{t:main}
(note that the examples in the proof of Theorem \ref{t:main} attaining the lower bounds are proper circular-arc graphs).
Let us prove the upper bound of $\d(G)$.
Fix any set of vertices $K=\{v_1,\dots,v_{\varrho(G)}\}$ and corresponding arcs $\{I_1,\dots,I_{\varrho(G)}\}$ with $I_1\cup \cdots \cup I_{\varrho(G)} = \SS$.
Thus, every arc in $\SS$ intersects two arcs in $\{I_1,\dots,I_{\varrho(G)}\}$.
Given any $u,w\in V(G) \setminus K$, there are $v_{1,u},v_{2,u},v_{1,w},v_{2,w} \in K$ with
$uv_{1,u}, uv_{2,u}, wv_{1,w}, wv_{2,w}, v_{1,u}v_{2,u}, v_{1,w}v_{2,w}\in E(G)$.
Thus,
$$
d_G(v_{1,u}v_{2,u}, v_{1,w}v_{2,w})
\le \Big\lfloor \frac12\, (\varrho(G)-2) \Big\rfloor = \Big\lfloor \frac12\, \varrho(G) \Big\rfloor -1.
$$
Hence,
$$
\begin{aligned}
\diam V(G)
& \le 1 + \Big( \Big\lfloor \frac12\, \varrho(G) \Big\rfloor -1\Big) + 1
= \Big\lfloor \frac12\, \varrho(G) \Big\rfloor + 1,
\\
\diam G
& \le \frac12  + \diam V(G) + \frac12 \le \Big\lfloor \frac12\, \varrho(G) \Big\rfloor + 2,
\end{aligned}
$$
and Lemma \ref{dddd} gives the upper bound for $\varrho(G)\geq 3$,
and $\d(G)\le 3/2$ if $\varrho(G) = 2$.

Let us prove now that the upper bound for $\varrho(G)\geq 3$ is sharp.
Fix any even integer $\varrho \ge 4$ and $0<\e < \pi/\varrho$,
and consider the proper circular-arc graph $\G_{\varrho}$ defined as the intersection graph of the family of arcs
$$
\begin{aligned}
\big\{ [e^{2\pi i(j-1)/\varrho},e^{2\pi ij/\varrho}]\big\}_{j=1}^{\varrho}
& \cup \big\{ [e^{\e i + 2\pi i(j-1)/\varrho},e^{\e i + 2\pi ij/\varrho}]\big\}_{j=1}^{\varrho/2-1}
\cup \big\{ [e^{\e i + \pi i + 2\pi i(j-1)/\varrho},e^{\e i + \pi i + 2\pi ij/\varrho}]\big\}_{j=1}^{\varrho/2-1}
\\
& \cup [e^{-\e i + \pi i-2\pi i/\varrho},e^{-\e i + \pi i}]
\cup [e^{-\e i + 2\pi i-2\pi i/\varrho},e^{-\e i + 2\pi i}]
\\
& \cup [e^{-\e i/2},e^{\e i}]
\cup [e^{-\e i/2 + \pi i},e^{\e i + \pi i}]
\\
& \cup [e^{-\e i + \pi i},e^{\e i/2 + \pi i}]
\cup [e^{-\e i + 2\pi i},e^{\e i/2 + 2\pi i}] .
\end{aligned}
$$
Let $x$ (respectively, $y$) be the midpoint of the edge of $\G_{\varrho}$ with endpoints corresponding to the arcs
$[e^{-\e i/2},e^{\e i}]$ and $[e^{-\e i + 2\pi i},e^{\e i/2 + 2\pi i}]$
(respectively, $[e^{-\e i/2 + \pi i},e^{\e i + \pi i}]$ and $[e^{-\e i + \pi i},e^{\e i/2 + \pi i}]$).
We have $d_{\G_{\varrho}}(x,y)=\varrho/2 + 2$.
Let $\g_1$ and $\g_2$ be two geodesics in $\G_{\varrho}$ joining $x$ and $y$ such that
$\g_1$ contains the corresponding vertices to the arcs
$$
[e^{-\e i/2},e^{\e i}]
\cup \big\{ [e^{\e i + 2\pi i(j-1)/\varrho},e^{\e i + 2\pi ij/\varrho}]\big\}_{j=1}^{\varrho/2-1}
\cup [e^{-\e i + \pi i-2\pi i/\varrho},e^{-\e i + \pi i}]
\cup [e^{-\e i + \pi i},e^{\e i/2 + \pi i}]
$$
and $\g_2$ contains the corresponding vertices to the arcs
$$
[e^{-\e i/2 + \pi i},e^{\e i + \pi i}]
\cup \big\{ [e^{\e i + \pi i + 2\pi i(j-1)/\varrho},e^{\e i + \pi i + 2\pi ij/\varrho}]\big\}_{j=1}^{\varrho/2-1}
\cup [e^{-\e i + 2\pi i-2\pi i/\varrho},e^{-\e i + 2\pi i}]
\cup [e^{-\e i + 2\pi i},e^{\e i/2 + 2\pi i}] .
$$
Consider the geodesic bigon $\{\g_1,\g_2\}$.
If $p$ is the midpoint of $\g_1$, then
$d_{\G_{\varrho}}(p,\g_2)= d_{\G_{\varrho}}(x,y)/2=\varrho/4 + 1$.
Hence, $\varrho/4 + 1 = d_{\G_{\varrho}}(p,\g_2)\le \d(\G_{\varrho}) \le \varrho/4 + 1$, and we conclude $\d(\G_{\varrho}) = \varrho/4 + 1$.

Assume that $\varrho(G) = 2$.
We have proved $\d(G)\le 3/2$.
Seeking for a contradiction assume that $\d(G) = 3/2$.
By Theorem \ref{t:BRS}, there exist $T=\{x,y,z\} \in \mathbb{T}_1$ and $p \in [xy]$ with $d_G(p,[xz] \cup [yz])=\delta(G) = 3/2$.
Since we have proved $\diam V(G) \le 2$ and $\diam G \le 3$, we have $d_G(x,y) = 3$, $d_G(p,\{x,y\}) = d_G(p,[xz] \cup [yz])= 3/2$,
$x,y \in J(G) \setminus V(G)$ and $p \in V(G)$.
Hence, $x$ (respectively, $y$) is the midpoint of $u_1u_2\in E(G)$ with $u_1,u_2\in V(G)\setminus K$ and corresponding arcs $H_1,H_2$
(respectively, the midpoint of $w_1w_2\in E(G)$ with $w_1,w_2\in V(G)\setminus K$ and corresponding arcs $J_1,J_2$).
Note that each arc $H_1,H_2,J_1,J_2$ intersects $I_1 \cap I_2$ and it is different from $I_1$ and $I_2$.
Since $G$ is a proper circular-arc graph, we have that both $H_1$ and $H_2$ contain the same connected component $\Lambda$ of $I_1 \cap I_2$;
also, both $J_1$ and $J_2$ contain the other connected component $\Lambda'$ of $I_1 \cap I_2$.
Denote by $I$ the corresponding arc to $p$.
Since $G$ is a proper circular-arc graph, we have that $I$ contains either $\Lambda$ or $\Lambda'$.
Assume that $I$ contains $\Lambda$ (if $I$ contains $\Lambda'$, then the argument is similar).
Thus, $d_G(p,u_1) = d_G(p,u_2) = 1$.
Without loss of generality we can assume that $u_1 \in [xy]$.
Therefore, $u_2 \in [xz] \cup [yz]$ and we conclude
$3/2 = d_G(p,[xz] \cup [yz]) \le d_G(p,u_2) = 1$,
a contradiction.
Hence, $\d(G) < 3/2$ and Theorem \ref{t:BRS} gives $\d(G)\le 5/4$.

%
%
%
%
%

Finally, we show that the proper circular-arc graph $\G$ (with $\varrho(G) = 2$) corresponding to the arcs
$$
[e^{0i},e^{\pi i}] \cup [e^{\pi i},e^{2\pi i}]
\cup [e^{-\pi i/8},e^{\pi i/4}] \cup [e^{-\pi i/4},e^{\pi i/8}]\cup [e^{\pi i-\pi i/8},e^{\pi i+\pi i/4}] \cup [e^{\pi i-\pi i/4},e^{\pi i+\pi i/8}],
$$
satisfies $\d(\G) = 5/4$.
Let $x$ (respectively, $y$) be the midpoint of the edge in $\G$ with endpoints $[e^{-\pi i/8},e^{\pi i/4}]$ and $[e^{-\pi i/4},e^{\pi i/8}]$
(respectively, $[e^{\pi i-\pi i/8},e^{\pi i+\pi i/4}]$ and $[e^{\pi i-\pi i/4},e^{\pi i+\pi i/8}]$).
We have $d_{\G}(x,y)=3$.
One can check that there are two geodesics $\g_1$ and $\g_2$ such that the midpoint $q$ of $\g_1$ is a vertex of $\G$ and $d_{\G}(q,\g_2)=1$.
If $p$ is a point in $\g_1$ with $d_{\G}(p,q)=1/4$, then $d_{\G}(p,\g_2)=5/4$.
If we consider the geodesic bigon $\{\g_1,\g_2\}$, then $5/4 = d_{\G}(p,\g_2)\le \d(\G) \le 5/4$, and we conclude $\d(\G) = 5/4$.
\end{proof}

Note that Theorem \ref{t:l4} below gives a sufficient condition in order to attain the lower bound of $\d(G)$ in Theorem \ref{t:main}.
This sufficient condition is, in fact, a characterization when $3 \le \varrho(G)\le 4$.

\smallskip

Next, we are going to characterize the circular-arc graphs with the two smallest possible values for the hyperbolicity constant: $0$ and $3/4$.

\smallskip

We say that a circular-arc graph $G$ has the $0$-\emph{property} if we have either:

$(1)$ $G$ is an interval graph with the $0$-intersection property.

$(2)$ $\varrho(G)=1$ and given two corresponding arcs $I,J$ to vertices in $G$ with $I,J\neq \SS$, we have $I \cap J = \emptyset$.

$(3)$ $\varrho(G)=2$ and there exist two corresponding arcs $I_1,I_2$ to vertices in $G$ with $I_1 \cup I_2 = \SS$ such any other corresponding arc
to some vertex in $G$ intersects just one of the arcs $I_1,I_2,$
and if $G_j$ is the interval graph corresponding to the arcs intersecting $I_j$ then $G_j$ has the $0$-intersection property for $j=1,2$.

\begin{proposition} \label{t:0}
A circular-arc graph $G$ satisfies $\d(G) = 0$ if and only if $G$ has the $0$-property.
\end{proposition}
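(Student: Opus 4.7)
The plan is to invoke Theorem \ref{t:04}, which states $\delta(G)=0$ iff $G$ is a tree, and reduce the proposition to characterizing tree circular-arc graphs, split by the value of $\varrho(G)$. For $\varrho(G)\ge 3$, Theorem \ref{t:main} gives $\delta(G)\ge \varrho(G)/4 \ge 3/4$, so $G$ is not a tree and no case of the 0-property applies. For $\varrho(G)=0$, $G$ is an interval graph and Theorem \ref{034} yields $\delta(G)=0$ iff $G$ has the 0-intersection property, matching case $(1)$.

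For $\varrho(G)=1$, I fix a vertex $v$ whose arc is $\SS$; then $v$ is adjacent to every other vertex. If two other vertices $u,w$ with non-$\SS$ arcs $I,J$ satisfy $I\cap J \neq \emptyset$, the triangle $\{v,u,w\}$ gives $\delta(G)\ge 3/4>0$; conversely, if all non-$\SS$ arcs are pairwise disjoint, $G$ is a star, a tree with $\delta(G)=0$. This is case $(2)$. For $\varrho(G)=2$, I fix a minimum total set $\{v_1,v_2\}$ with arcs $I_1,I_2$ and $I_1\cup I_2=\SS$. Since $I_j \neq \SS$ forces $\SS\setminus I_{3-j}\subseteq I_j$, we have $I_1\cap I_2\neq \emptyset$ and thus $v_1v_2\in E(G)$. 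Assuming $G$ is a tree, any third vertex $u$ whose arc meets both $I_1$ and $I_2$ would close a triangle with $v_1,v_2$, so every non-$\{v_1,v_2\}$ arc meets exactly one of $I_1,I_2$; moreover, an arc not meeting $I_{3-j}$ is forced into $\SS\setminus I_{3-j}\subseteq I_j$. The non-$\{v_1,v_2\}$ vertices thus split disjointly into sets $A_1,A_2$ with arcs confined to $I_1,I_2$ respectively, and since arcs on opposite sides lie in $I_1\setminus I_{2}$ and $I_2\setminus I_{1}$ they are disjoint, giving no edges between $A_1$ and $A_2$. Consequently $G=G_1\cup G_2$ together with the bridge edge $v_1v_2$, where $G_j:=G[\{v_j\}\cup A_j]$ is an interval graph on disjoint vertex sets. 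Since $G$ is a tree, so is each $G_j$, and Theorem \ref{034} gives each $G_j$ the 0-intersection property: case $(3)$. Conversely, under case $(3)$, each $G_j$ is a tree (Theorem \ref{034}), and joining two disjoint trees by an edge produces a tree, so $\delta(G)=0$.

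The main obstacle is the $\varrho(G)=2$ analysis: one must show that every non-$\{v_1,v_2\}$ arc meets exactly one of $I_1,I_2$ and that the consequent vertex partition produces no cross-edges between $A_1$ and $A_2$, so that $G$ genuinely decomposes as two interval graphs joined by the single edge $v_1v_2$. The key geometric input making this work is the inclusion $\SS\setminus I_{3-j}\subseteq I_j$ coming from $I_1\cup I_2=\SS$, which simultaneously confines arcs avoiding $I_{3-j}$ into $I_j$ and separates arcs lying on opposite sides.
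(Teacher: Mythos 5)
Your proof is correct and follows essentially the same route as the paper's: bound $\varrho(G)\le 2$ via the lower bound of Theorem \ref{t:main}, rule out the forbidden intersection patterns with $3$-cycle arguments (Lemma \ref{p:c3}), and reduce the remaining structure to interval graphs handled by Theorem \ref{034}. The only difference is cosmetic: where you invoke the tree characterization of Theorem \ref{t:04} and verify by hand that for $\varrho(G)=2$ the graph splits into two vertex-disjoint interval graphs joined only by the edge $v_1v_2$, the paper transfers $\delta(G)=0$ to the pieces via Proposition \ref{t:Tdec}; your version makes explicit a decomposition detail the paper leaves to the reader.
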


\begin{proof}
If $G$ is an interval graph, then Theorem \ref{034} gives the result.
Assume now that $G$ is a NI circular-arc graph.

If $G$ satisfies $(2)$ in the definition of $0$-property, then $G$ is a tree (in fact, it is a star graph) and we have $\d(G) = 0$.

If $G$ satisfies $(3)$ in the definition of $0$-property, then $G$ is a tree and we have $\d(G) = 0$.

Assume that $\d(G) = 0$.
Theorem \ref{t:main} gives that $\varrho(G) \le 2$.

Assume that $\varrho(G) = 1$.
Seeking for a contradiction assume that there exist two corresponding arcs $I,J$ to vertices in $G$ with $I,J\neq \SS$ and $I \cap J \neq \emptyset$.
Therefore, there exists a cycle with length three corresponding to the arcs $I,J, \SS$,
and Lemma \ref{p:c3} gives $0= \d(G) \ge 3/4$,
a contradiction.
Thus, we have $I \cap J = \emptyset$ and $G$ has the $0$-property.

Assume that $\varrho(G) = 2$.
Thus, there exist two corresponding arcs $I_1,I_2$ to vertices in $G$ with $I_1 \cup I_2 = \SS$.
Seeking for a contradiction assume that there exists a corresponding arc $I$
to some vertex in $G$ intersecting both arcs $I_1$ and $I_2$.
Therefore, there is a cycle of length $3$ in $G$ corresponding to $I,I_1,I_2,$ and we have $\d(G) \ge 3/4$ by Lemma \ref{p:c3},
which is a contradiction.
So, any other corresponding arc
to some vertex in $G$ intersects just one of the arcs $I_1,I_2$.
Let $G_j$ be the interval graph corresponding to the arcs intersecting $I_j$ for $j=1,2$.
Since $\d(G) = 0$, Proposition \ref{t:Tdec} gives that $\d(G_1) = \d(G_2) = 0$.
Thus, Theorem \ref{034} gives that $G_1$ and $G_2$ have the $0$-intersection property.
\end{proof}

We say that a circular-arc graph $G$ has the $(3/4)$-\emph{property} if we have either:

$(1)$ $G$ is an interval graph with the $(3/4)$-intersection property.

$(2)$ $\varrho(G)=1$,
there exist two corresponding arcs $I',I''\neq \SS$ to vertices in $G$ with $I'\cap I'' \neq \emptyset$,
and for every three corresponding arcs $I,J,K\neq \SS$ to vertices in $G$, we have either $I\cap J = \emptyset$ or $I\cap K = \emptyset$.

$(3)$ $\varrho(G)=2$ and there exist two corresponding arcs $I_1,I_2$ to vertices in $G$ with $I_1 \cup I_2 = \SS$ such any other corresponding arc
to some vertex in $G$ intersects just one of the arcs $I_1,I_2,$
and if $G_j$ is the interval graph corresponding to the arcs intersecting $I_j$ then
$G_1$ has the $(3/4)$-intersection property and
$G_2$ has either the $0$- or the $(3/4)$-intersection property.

$(4)$ $\varrho(G)=2$ and there exist three corresponding arcs $I,I_1,I_2$ to vertices in $G$ with $I_1 \cup I_2 = \SS$
and $I\cap I_j \neq \emptyset$ for $j=1,2,$ such that any other arc
corresponding to some vertex in $G$ intersects just one of the arcs $I_1,I_2$ and does not intersect $I$,
and if $G_j$ is the interval graph corresponding to the arcs intersecting $I_j$ except for $I$ then $G_j$ has either the $0$- or the $(3/4)$-intersection property for each  $j=1,2$.

$(5)$ $\varrho(G)=3$ and there exist three corresponding arcs $I_1,I_2,I_3$ to vertices in $G$ with $I_1 \cup I_2 \cup I_3 = \SS$
such any other corresponding arc
to some vertex in $G$ intersects just one of the arcs $I_1,I_2,I_3,$
and if $G_j$ is the interval graph corresponding to the arcs intersecting $I_j$ then $G_j$ has either the $0$- or the $(3/4)$-intersection property for each  $j=1,2,3$.

\begin{proposition} \label{t:34}
A circular-arc graph $G$ satisfies $\d(G) = 3/4$ if and only if $G$ has the $(3/4)$-property.
\end{proposition}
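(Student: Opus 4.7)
The plan is to adapt the strategy of Proposition \ref{t:0}, combining Theorem \ref{t:04} (which characterises graphs with $\d(G)=3/4$ as non-trees whose every cycle has length $3$), Theorem \ref{t:main} (to bound $\varrho(G)$) and Theorem \ref{034} (to control the interval subgraphs that appear in the decomposition of $G$).

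First I would observe that $\d(G)=3/4$ forces $\varrho(G)\le 3$, since Theorem \ref{t:main} gives $\varrho(G)/4\le \d(G)$ whenever $\varrho(G)\ge 3$, so $\varrho(G)\ge 4$ would yield $\d(G)\ge 1$. This splits the analysis into the four cases $\varrho(G)\in\{0,1,2,3\}$, matching the five parts $(1)$--$(5)$ of the $(3/4)$-property. The case $\varrho(G)=0$ is handled directly by Theorem \ref{034}. For $\varrho(G)=1$, the arc $\SS$ provides a universal vertex $v_0$, and the remaining vertices form an interval graph $G_0$; triangles in $G$ correspond exactly to edges of $G_0$, while cycles of length $\ge 4$ correspond either to cycles of length $\ge 4$ in $G_0$ or to vertices of $G_0$ of degree $\ge 2$ (which close a $4$-cycle through $v_0$). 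Combined with Theorem \ref{t:04}, this forces $G_0$ to be a non-empty matching together with isolated vertices, which is precisely condition $(2)$.

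For $\varrho(G)=2$, let $I_1,I_2$ be covering arcs with corresponding vertices $v_1,v_2$ (the edge $v_1v_2$ exists since $I_1\cap I_2\ne\emptyset$). Every other vertex is classified according to whether its arc meets only $I_1$, only $I_2$, or both; two distinct vertices of the last type would produce a $4$-cycle through $v_1$ and $v_2$, so at most one such vertex $v_3$ can exist. If none exists we are in situation $(3)$; if exactly one exists we are in situation $(4)$, and the avoidance of further $4$-cycles forces every other arc to miss the arc $I$ of $v_3$. In both situations, the two sides $G_j$ (the interval graph of arcs meeting $I_j$, excluding $v_3$ in case $(4)$) decompose $G$ along the edge $v_1v_2$, respectively along the triangle $v_1v_2v_3$. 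Proposition \ref{t:Tdec} together with Theorem \ref{034} then translates $\d(G)=3/4$ into the stated $0$- or $(3/4)$-intersection properties of $G_1$ and $G_2$, with a triangle being forced somewhere (inside $G_1$ in case $(3)$, where the edge $v_1v_2$ cannot lie in any triangle; provided by $v_1v_2v_3$ in case $(4)$). The case $\varrho(G)=3$ proceeds in the same spirit: the covering arcs yield a triangle $v_1v_2v_3$, any arc meeting two of the $I_j$ would create a $4$-cycle, and the three sides must each have the $0$- or $(3/4)$-intersection property, producing condition $(5)$.

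The step I expect to be most delicate is the $\varrho(G)=2$ case, where I would need to rule out composite cycles travelling between the two sides through $\{v_1,v_2\}$ (and through $v_3$ in situation $(4)$). The key observation will be that, in the absence of further Case C vertices, the only path from $v_1$ to $v_2$ inside each side $G_j$ is the edge $v_1v_2$ itself, because any longer path, together with the edge, would produce a cycle of length $\ge 4$ in $G_j$ and contradict the $(3/4)$-intersection property; this collapse of composite cycles to triangles is what ultimately makes the characterisation go through.
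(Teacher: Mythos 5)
Your proposal is correct and follows essentially the same route as the paper: bound $\varrho(G)\le 3$ via Theorem \ref{t:main}, rule out arcs meeting two covering arcs by exhibiting a forbidden cycle of length $\ge 4$ (resp.\ $5$), form the T-decomposition around the edge, triangle or covering cycle, and finish with Proposition \ref{t:Tdec} and Theorem \ref{034}. The only cosmetic difference is that in the $\varrho(G)=1$ case you argue through Theorem \ref{t:04} where the paper invokes Lemma \ref{p:c3}, and you make explicit the (correct) verification that no cycle can cross between the two sides of the decomposition, a point the paper leaves implicit.
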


\begin{proof}
If $G$ is an interval graph, then Theorem \ref{034} gives the result.
Assume now that $G$ is a NI circular-arc graph.

If $G$ satisfies either $(2)$, $(3)$, $(4)$ or $(5)$ in the definition of $(3/4)$-property,
then Theorems \ref{t:04} and \ref{034} and Proposition \ref{t:Tdec} give that $\d(G) = 3/4$.

Assume that $\d(G) = 3/4$.
Theorem \ref{t:main} gives $\varrho(G) \le 3$.

Assume that $\varrho(G) = 3$.
Thus, there exist three corresponding arcs $I_1,I_2,I_3$ to vertices in $G$ with $I_1 \cup I_2 \cup I_3 = \SS$
Seeking for a contradiction assume that there exists a corresponding arc $I$
to some vertex in $G$ intersecting at least two arcs in $\{I_1,I_2,I_3\}$.
Therefore, there is a cycle of length $4$ in $G$ corresponding to $I,I_1,I_2,I_3$ and we have $\d(G) \ge 1$ by Lemma \ref{p:c3}, which is a contradiction.
So, any other corresponding arc
to some vertex in $G$ intersects just one of the arcs $I_1,I_2,I_3$.
Let $G_j$ be the interval graph corresponding to the arcs intersecting $I_j$ for $j=1,2,3$.
Let us denote by $G_0$ the subgraph of $G$ induced by the corresponding vertices to $I_1,I_2,I_3$ ($G_0$ is a cycle graph with three vertices).
Note that $\{G_0,G_1,G_2,G_3\}$ is a T-decomposition of $G$.
Since $\d(G) = 3/4$, Proposition \ref{t:Tdec} gives that $\d(G_j) \le 3/4$ for $j=1,2,3$.
Thus, Theorem \ref{034} gives that $G_j$ has either the $0$- or the $(3/4)$-intersection property for each  $j=1,2,3,$ and we obtain condition $(5)$.

Assume that $\varrho(G) = 2$.
Thus, there exist two corresponding arcs $I_1,I_2$ to vertices in $G$ with $I_1 \cup I_2 = \SS$.

Assume that any other corresponding arc
to some vertex in $G$ intersects just one of the arcs $I_1,I_2$.
Let $G_j$ be the interval graph corresponding to the arcs intersecting $I_j$ for $j=1,2,$
and let $G_0$ be the subgraph of $G$ induced by the corresponding vertices to $I_1,I_2$ ($G_0$ has just an edge).
Since $\{G_0,G_1,G_2\}$ is a T-decomposition of $G$, Proposition \ref{t:Tdec} gives
$$
\frac34
= \d(G)
= \max \big\{ \d(G_0), \d(G_1), \d(G_2)\big\}
= \max \big\{ \d(G_1), \d(G_2)\big\}.
$$
Hence, a subgraph, say $G_1$, has hyperbolicity constant $3/4$ and $\d(G_2)\le 3/4$.
Thus, Theorems \ref{t:04} and \ref{034} give that $G_1$ has the $(3/4)$-intersection property and
$G_2$ has either the $0$- or the $(3/4)$-intersection property, and we obtain condition $(3)$.


Assume that there exist corresponding arcs $I,I_1,I_2$ to vertices in $G$ with $I\cap I_j \neq \emptyset$ for $j=1,2$.
Seeking for a contradiction assume that there exists another corresponding arc $J$
to some vertex in $G$ intersecting both arcs $I_1$ and $I_2$.
Hence, there is a cycle of length four in $G$ corresponding to $I,I_1,I_2,J$ and we have $\d(G) \ge 1$ by Lemma \ref{p:c3}, which is a contradiction.
Thus, any other corresponding arc
to some vertex in $G$ intersects just one of the arcs $I_1,I_2$.
A similar argument gives that any other corresponding arc to some vertex in $G$ does not intersect $I$.
Let $G_j$ be the interval graph corresponding to the arcs intersecting $I_j$ except for $I$.
Let us denote by $G_0$ the subgraph of $G$ induced by the corresponding vertices to $I,I_1,I_2$ ($G_0$ is a cycle graph with three vertices).
Since $\{G_0,G_1,G_2\}$ is a T-decomposition of $G$, Proposition \ref{t:Tdec} gives
$$
\frac34
= \d(G)
= \max \big\{ \d(G_0), \d(G_1), \d(G_2)\big\}
= \max \Big\{ \frac34\,, \d(G_1), \d(G_2)\Big\}.
$$
This equation holds if and only if $\d(G_j)\le 3/4$ for $j=1,2$.
Thus, Theorems \ref{t:04} and \ref{034} give that $G_j$ has either the $0$- or the $(3/4)$-intersection property for each $j=1,2,$ and we obtain condition $(4)$.

Finally, assume that $\varrho(G) = 1$.

Seeking for a contradiction assume that
for every two corresponding arcs $I',I''\neq \SS$ to vertices in $G$, we have $I'\cap I'' = \emptyset$.
Thus, $G$ is a star graph and $\d(G)=0$, a contradiction.
Hence, there exist two arcs $I',I''\neq \SS$
with $I'\cap I'' \neq \emptyset$.

Seeking for a contradiction assume that
there exist three corresponding arcs $I,J,K\neq \SS$ to vertices in $G$
with $I\cap J \neq \emptyset$ and $I\cap K \neq \emptyset$.
Therefore, there exists a cycle with length four corresponding to the arcs $I,J,K, \SS$,
and Lemma \ref{p:c3} gives $\d(G) \ge 1$,
a contradiction.
Thus, for every three arcs $I,J,K\neq \SS$
we have either $I\cap J = \emptyset$ or $I\cap K = \emptyset$,
and we obtain condition $(2)$.
\end{proof}

We say that a circular-arc graph $G$ with $\varrho(G) \ge 3$ has the $\varrho(G)$-\emph{property} if there exist $\varrho(G)$ corresponding arcs $I_1,\dots,I_{\varrho(G)}$
to vertices in $G$ with $I_1 \cup \cdots \cup I_{\varrho(G)} = \SS$ such any other corresponding arc
to some vertex in $G$ intersects just one of the arcs $I_1,\dots,I_{\varrho(G)},$
and if $G_j$ is the interval graph corresponding to the arcs intersecting $I_j$ for $1\le j \le \varrho(G)$ and $\varrho(G) \le 5$ then:

$(1)$ $G_j$ has either the $0$- or $(3/4)$-intersection property for $1\le j \le \varrho(G)$ if $\varrho(G)=3$.

$(2)$ $G_j$ has either the $0$-, $(3/4)$- or $1$-intersection property for $1\le j \le \varrho(G)$ if $\varrho(G)=4$.

$(3)$ $G_j$ does not have the $(3/2)$-intersection property for $1\le j \le \varrho(G)$ if $\varrho(G)=5$.

\smallskip

The next result gives a sufficient condition in order to attain the lower bound of $\d(G)$ in Theorem \ref{t:main}.
We also prove that this sufficient condition is, in fact, a characterization when $3 \le \varrho(G)\le 4$.

\begin{theorem} \label{t:l4}
Let $G$ be a circular-arc graph with $\varrho(G) \ge 3$.
If $G$ has the $\varrho(G)$-property, then $G$ satisfies $\d(G) = \varrho(G)/4$.
Furthermore, if $\d(G) = \varrho(G)/4$ with $3 \le \varrho(G)\le 4$, then $G$ has the $\varrho(G)$-property.
\end{theorem}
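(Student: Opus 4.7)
The plan is to build a T-decomposition of $G$ along any minimum total set $K=\{v_1,\dots,v_{\varrho(G)}\}$ with arcs $I_1,\dots,I_{\varrho(G)}$ and exploit it in both directions. By the minimality of $\varrho(G)$, the subgraph $G_0$ of $G$ induced by $K$ is isomorphic to $C_{\varrho(G)}$. For each $1\le j\le \varrho(G)$, define $G_j$ as the subgraph induced by $\{v_j\}$ together with the vertices whose arcs meet only $I_j$ among the $I_k$'s; such arcs are forced to lie in $I_j\setminus\bigcup_{k\neq j}I_k\subset I_j$, so $G_j$ is an interval graph, $G_0\cap G_j=\{v_j\}$, and $G_s\cap G_r=\emptyset$ for distinct $s,r\ge 1$. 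Hence $\{G_0,G_1,\dots,G_{\varrho(G)}\}$ is a T-decomposition with cut-vertex overlaps, Proposition \ref{t:Tdec} gives $\d(G)=\max_j\d(G_j)$, and Lemma \ref{cyclegarph} yields $\d(G_0)=\varrho(G)/4$.

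For the forward direction, assume the $\varrho(G)$-property. Conditions (1), (2), (3) combined with Theorem \ref{034} yield $\d(G_j)\le 3/4,\,1,\,5/4$ for $\varrho(G)=3,4,5$ respectively, while for $\varrho(G)\ge 6$ the universal bound $\d(G_j)\le 3/2\le \varrho(G)/4$ from Theorem \ref{034} requires no further hypothesis. In every case $\d(G)=\max_j\d(G_j)=\varrho(G)/4$.

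For the converse with $\varrho(G)\in\{3,4\}$, assume $\d(G)=\varrho(G)/4$ and fix any minimum total set $K$ as above. The crucial step is to prove that every arc $A$ corresponding to a vertex of $G\setminus K$ meets exactly one $I_j$. If $\varrho(G)=3$ and $A$ meets two or three $I_j$'s, say including $I_1$ and $I_2$, then $v_A v_1 v_3 v_2 v_A$ is a $4$-cycle in $G$ and Lemma \ref{p:c3} yields $\d(G)\ge 1>3/4$, a contradiction. If $\varrho(G)=4$ and $A$ meets two consecutive arcs or three of the four arcs (which always include a consecutive pair), then $g=v_A v_1 v_4 v_3 v_2 v_A$ is a $5$-cycle in $G$ and a direct inspection shows that some vertex of $g$ (for instance $v_3$ or $v_4$) has degree $2$ in the subgraph induced by $V(g)$; Theorem \ref{t:delta2} then gives $\d(G)\ge 5/4>1$, a contradiction. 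A two-intersection at non-consecutive arcs is ruled out by connectedness of $A\subset\SS$, which forces $A$ to pass through an intermediate $I_j$ and thus reduces to the three-intersection case. Once the exactly-one property is established, Proposition \ref{t:Tdec} gives $\d(G_j)\le\varrho(G)/4$, and Theorem \ref{034} converts this into precisely the intersection property required by condition (1) for $\varrho(G)=3$ and condition (2) for $\varrho(G)=4$.

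The main obstacle is the remaining subcase $\varrho(G)=4$ with $A$ meeting all four $I_j$'s. Then the subgraph induced by $\{v_A,v_1,v_2,v_3,v_4\}$ is the wheel $W_4$ and its only $5$-cycle has no vertex of degree $2$ in the induced subgraph, so Theorem \ref{t:delta2} does not directly apply. I would resolve this geometrically in $\SS$: since $A\neq\SS$, its complementary arc $\SS\setminus A$ cannot contain any full $I_j$ nor properly straddle three consecutive $I_j$'s (the latter would force $\SS\setminus A$ to contain the middle $I_j$ entirely, so that $A$ would miss it), hence $\SS\setminus A$ lies inside the union of two consecutive $I_j$'s, and then $\{A\}$ together with those two arcs forms a total set of size $3$, contradicting $\varrho(G)=4$.
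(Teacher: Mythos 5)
Your proof is correct and follows essentially the same route as the paper: a T-decomposition $\{G_0,G_1,\dots,G_{\varrho(G)}\}$ along a minimum total set combined with Proposition \ref{t:Tdec}, Lemma \ref{cyclegarph} and Theorem \ref{034} for the forward direction, and, for the converse, excluding arcs meeting two or more of the $I_j$ via the $4$-cycle of Lemma \ref{p:c3} when $\varrho(G)=3$ and the $5$-cycle with a degree-two vertex of Theorem \ref{t:delta2} when $\varrho(G)=4$. Your only genuine addition is the explicit geometric argument ruling out an arc meeting all four $I_j$ (which the paper disposes of with the one-line assertion that the definition of $\varrho(G)$ forbids $v_I$ from having four neighbours in $K$), and that extra care is welcome rather than a deviation.
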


\begin{proof}
Assume first that $G$ has the $\varrho(G)$-property.
Let us denote by $G_0$ the subgraph of $G$ induced by the corresponding vertices to $I_1,\dots,I_{\varrho(G)}$ ($G_0$ is a cycle graph with $\varrho(G)$ vertices).
Since $\{G_0,G_1,\dots,G_{\varrho(G)}\}$ is a T-decomposition of $G$, Theorem \ref{t:main}, Proposition \ref{t:Tdec} and Lemma \ref{cyclegarph} give
\begin{equation} \label{eq:l4}
\frac{\varrho(G)}4
\le \d(G)
= \max \big\{ \d(G_0), \d(G_1), \dots, \d(G_{\varrho(G)})\big\}
= \max \Big\{ \frac{\varrho(G)}4\,, \d(G_1), \dots, \d(G_{\varrho(G)})\Big\}.
\end{equation}
Since $G_j$ is an interval graph for $1\le j \le \varrho(G)$,
if $\varrho(G) \ge 6$, then Theorem \ref{034} gives
$\d(G_j) \le 3/2 \le \varrho(G)/4$ for $1\le j \le \varrho(G)$.

If $\varrho(G) = 3$, then Theorem \ref{034} gives
$\d(G_j) \le 3/4 = \varrho(G)/4$ for $1\le j \le \varrho(G)$.

If $\varrho(G) = 4$, then Theorem \ref{034} gives
$\d(G_j) \le 1 = \varrho(G)/4$ for $1\le j \le \varrho(G)$.

If $\varrho(G) = 5$, then Theorem \ref{034} gives
$\d(G_j) \le 5/4 = \varrho(G)/4$ for $1\le j \le \varrho(G)$.

These inequalities and \eqref{eq:l4} give $\d(G) = \varrho(G)/4$ in every case.

Assume now that $\d(G) = \varrho(G)/4$ with $3 \le \varrho(G) \le 4$.

Seeking for a contradiction assume that there exists a corresponding arc $I$
to some vertex in $G$ intersecting at least two arcs in $\{I_1,\dots,I_{\varrho(G)} \}$.
Denote by $\{v_1,\dots,v_{\varrho(G)} \}$ their corresponding vertices in $G$, and by $C$ the cycle in $G$ with vertices $\{v_1,\dots,v_{\varrho(G)} \}$.
Let $v_I$ be the corresponding vertex in $G$ to $I$.

If $\varrho(G) =3$, then there is a cycle in $G$ with vertices $\{v_1,v_2,v_3,v_I \}$,
and Lemma \ref{p:c3} gives $\d(G)\ge 1$, a contradiction.

If $\varrho(G) =4$, then we show now that there is a cycle in $G$ with vertices $\{v_1,v_2,v_3,v_4,v_I \}$.
The definition of $\varrho(G)$ gives that $v_I$ is neighbor of at most three vertices in $\{v_1,v_2,v_3,v_4 \}$.
Without loss of generality we can assume that
$v_1v_2, v_2v_3, v_3v_4, v_4v_1, v_1v_I, v_2v_I\in E(G)$ and $v_4v_I \notin E(G)$.
Consider the cycle $g:= v_1v_I \cup v_Iv_2 \cup v_2v_3 \cup v_3v_4 \cup v_4v_1$ in $G$.
Since $L(g) = 5$ and $\deg_g(v_4)=2$,
Theorem \ref{t:delta2} gives $\d(G)\ge 5/4$, a contradiction.

Thus, any corresponding arc to some vertex in $G$ intersects just one of the arcs $\{I_1,\dots,I_{\varrho(G)} \}$.
Let $G_j$ be the interval graph corresponding to the arcs intersecting $I_j$ for $1\le j \le \varrho(G)$.
Let us denote by $G_0$ the subgraph of $G$ induced by the corresponding vertices to $I_1,\dots,I_{\varrho(G)}$ for $1\le j \le \varrho(G)$ ($G_0$ is a cycle graph with $\varrho(G)$ vertices).
Since $\{G_0,G_1,\dots,G_{\varrho(G)}\}$ is a T-decomposition of $G$, Proposition \ref{t:Tdec} and Lemma \ref{cyclegarph} give
$$
\frac{\varrho(G)}4
= \d(G)
= \max \big\{ \d(G_0), \d(G_1), \dots, \d(G_{\varrho(G)})\big\}
= \max \Big\{ \frac{\varrho(G)}4\,, \d(G_1), \dots, \d(G_{\varrho(G)})\Big\}.
$$
This equation holds if and only if $\d(G_j)\le \varrho(G)/4$ for $1\le j \le \varrho(G)$.

If $\varrho(G) = 3$, then Theorem \ref{034} gives
$\d(G_j) \le 3/4 = \varrho(G)/4$ for $1\le j \le \varrho(G)$ if and only if $(1)$ holds.

If $\varrho(G) = 4$, then Theorem \ref{034} gives
$\d(G_j) \le 1 = \varrho(G)/4$ for $1\le j \le \varrho(G)$ if and only if $(2)$ holds.

Hence, $G$ has the $\varrho(G)$-property.
\end{proof}

\begin{example} \label{ex:x}
The second statement in Theorem \ref{t:l4} does not hold for $\varrho \ge 5$, as the following example shows.
Consider the graph $G_\varrho$ obtained from the cycle graph $C_\varrho$ with $\varrho$ vertices and an additional vertex $v_I$
connected by an edge with just three consecutive vertices in $C_\varrho$.
We have that $G_\varrho$ is a circular-arc graph without the $\varrho$-property.
\cite[Theorem 30]{MRSV} gives that the hyperbolicity constant of any graph with $n$ vertices is at most $n/4$.
Hence, $\varrho/4 \le \d(G_\varrho) \le (\varrho+1)/4$.
By using the characterization in \cite[Theorem 30]{MRSV} of the graphs with $n$ vertices and hyperbolicity constant $n/4$,
we obtain $\d(G_\varrho) < (\varrho+1)/4$.
Since $\d(G)$ is a multiple of $1/4$ by Theorem \ref{t:BRS}, we conclude $\d(G_\varrho) = \varrho/4$.
\end{example}

\section{Complement and line graph}

In this section we obtain bounds for the hyperbolicity constant of the complement and line of a circular-arc graph, respectively.
These theorems improve, for circular-arc graphs, the general bounds for the hyperbolicity constant of the complement and line graphs.

\smallskip

Given any graph $G$, we denote by $\overline G$ the complement of $G$, i.e.,
$\overline G$ is the graph with $V(\overline{G})=V(G)$
and $vw \in E(\overline{G})$ if and only if $vw \notin E( G )$.

\smallskip

Now, we are interested in the hyperbolicity of the complement of circular-arc graphs.
Let us start with two technical results.

\begin{lemma} \label{l:15}
Let $G$ be a circular-arc graph with $\varrho(G) \ge 1$.
If two vertices $u$ and $v$ are not neighbors and have two common neighbors $v_1,v_2,$ such that $v_1$ and $v_2$ are not neighbors,
then their corresponding arcs satisfy $I_{u}\cup I_{v}\cup I_{v_1}\cup I_{v_2} = \SS$.
\end{lemma}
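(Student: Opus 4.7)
The plan is to argue geometrically on the circle $\SS$, exploiting the fact that every arc is a connected subset, so an arc meeting two disjoint arcs must contain a whole ``bridge'' between them.

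First I would record the consequences of the hypotheses at the level of arcs. Since $u\not\sim v$, the arcs $I_u$ and $I_v$ are disjoint; in particular neither equals $\SS$ (otherwise it would meet the other). Likewise $I_{v_1}\cap I_{v_2}=\emptyset$ and neither of these equals $\SS$. Because $I_u, I_v$ are two disjoint, nonempty proper arcs of $\SS$, the complement $\SS\setminus(I_u\cup I_v)$ is a union of exactly two (open) arcs, which I would label $A_1$ and $A_2$, so that in cyclic order along $\SS$ one meets $I_u, A_1, I_v, A_2$.

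Next I would show that $I_{v_1}$ contains at least one of $A_1, A_2$, and similarly for $I_{v_2}$. Since $v_1\sim u$ and $v_1\sim v$, the arc $I_{v_1}$ meets both $I_u$ and $I_v$. As $I_{v_1}$ is a connected subset of $\SS$, any sub-path in $I_{v_1}$ joining a point of $I_u$ to a point of $I_v$ must cross from one to the other through one of the two gaps; being connected and avoiding none of the points it passes through, $I_{v_1}$ must then contain all of $A_1$ or all of $A_2$ (i.e.\ a whole gap, not just part of one). The same reasoning applies to $I_{v_2}$.

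The core step is then to show that $I_{v_1}$ and $I_{v_2}$ cannot both contain the same gap, and that neither of them can contain both gaps. The first is immediate from $I_{v_1}\cap I_{v_2}=\emptyset$. For the second, if $I_{v_1}\supseteq A_1\cup A_2$, then since $I_{v_1}$ is an arc and $A_1, A_2$ are separated in $\SS$ exactly by $I_u$ and $I_v$, the connectivity of $I_{v_1}$ forces it to contain one of $I_u$ or $I_v$ entirely; then $\SS\setminus I_{v_1}$ is contained in the other of $I_u, I_v$, and since $I_{v_2}\subseteq \SS\setminus I_{v_1}$, the arc $I_{v_2}$ cannot meet both $I_u$ and $I_v$, contradicting $v_2\sim u$ and $v_2\sim v$. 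Hence (up to relabelling) $A_1\subseteq I_{v_1}$ and $A_2\subseteq I_{v_2}$, so
$$
\SS \;=\; I_u\cup I_v\cup A_1\cup A_2 \;\subseteq\; I_u\cup I_v\cup I_{v_1}\cup I_{v_2},
$$
which is the desired equality. The main obstacle is the careful connectivity argument in the last paragraph (ruling out one arc swallowing both gaps); everything else is a bookkeeping of which disjoint arc lives in which region of $\SS$.
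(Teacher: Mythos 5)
Your proof is correct and follows the same route as the paper, which simply asserts that the disjointness of $I_u,I_v$ and of $I_{v_1},I_{v_2}$ together with the four nonempty intersections force the union to be $\SS$; you have supplied the connectivity details (each $I_{v_j}$ must swallow one of the two gaps of $\SS\setminus(I_u\cup I_v)$, and they cannot share a gap or one take both) that the paper leaves implicit.
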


\begin{proof}
Since $\varrho(G) \ge 1$,
$I_{u} \cap I_{v} = \emptyset$,
$I_{v_1} \cap I_{v_2} = \emptyset$,
$I_{u} \cap I_{v_j} \neq \emptyset$ and
$I_{v} \cap I_{v_j} \neq \emptyset$ for $j=1,2,$
we have $I_{u} \cup I_{v} \cup I_{v_1} \cup I_{v_2} = \SS$.
\end{proof}

\begin{lemma} \label{l:14}
Let $G$ be a circular-arc graph with $\varrho(G) > 4$.
If two vertices $u$ and $v$ have two common neighbors $v_1,v_2,$ such that $v_1$ and $v_2$ are not neighbors, then $u$ and $v$ are neighbors.
\end{lemma}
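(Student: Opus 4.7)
The plan is to argue by contradiction, leveraging the preceding Lemma \ref{l:15} as the main tool. Suppose, for the sake of contradiction, that $u$ and $v$ are not neighbors in $G$. Then the hypotheses of Lemma \ref{l:15} are met: $u$ and $v$ are non-adjacent, and they share two common neighbors $v_1, v_2$ which are themselves non-adjacent. Invoking that lemma, we obtain
\[
I_u \cup I_v \cup I_{v_1} \cup I_{v_2} = \SS.
\]

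Next, I would observe that this identity means the set $\{u, v, v_1, v_2\}$ is a total set of vertices in $G$, of size $4$. By the definition of $\varrho(G)$ as the minimum size of a total set of vertices, this forces $\varrho(G) \le 4$. But this contradicts the standing hypothesis that $\varrho(G) > 4$. Hence our supposition was false, and $u$ and $v$ must indeed be neighbors.

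I do not anticipate any real obstacle here: the lemma is essentially a one-line corollary of Lemma \ref{l:15} combined with the definition of $\varrho$. The only subtlety worth a brief comment is that, since $\varrho(G) > 4 \ge 1$, the hypothesis $\varrho(G) \ge 1$ required by Lemma \ref{l:15} is automatically satisfied, so the application is legitimate and no additional case analysis is needed.
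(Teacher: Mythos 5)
Your proof is correct and follows exactly the same route as the paper: assume $u$ and $v$ are non-adjacent, apply Lemma \ref{l:15} to get $I_u\cup I_v\cup I_{v_1}\cup I_{v_2}=\SS$, and conclude $\varrho(G)\le 4$, contradicting the hypothesis. The remark that $\varrho(G)>4$ automatically guarantees the hypothesis $\varrho(G)\ge 1$ of Lemma \ref{l:15} is a small but welcome addition.
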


\begin{proof}
Seeking for a contradiction, assume that $u$ and $v$ are not neighbors.
Lemma \ref{l:15} gives $I_{u} \cup I_{v} \cup I_{v_1} \cup I_{v_2} = \SS$,
and thus $\varrho(G) \le 4$, a contradiction.
So, $u$ and $v$ are neighbors.
\end{proof}

Recall that a graph is $s$-regular if every vertex has degree $s$, i.e., has $s$ neighbors.
In order to prove Theorem \ref{t:comple} below we need the following surprising result about regular graphs which is interesting by itself.

\begin{theorem} \label{t:regular}
Let $G$ be a $(n-3)$-regular graph with $n\ge 5$ vertices.
Then $\d(G)=1$ if $\overline G$ is a union of cycle graphs with three vertices, and $\d(G)=5/4$ otherwise.
\end{theorem}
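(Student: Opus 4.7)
The plan rests on three facts. First, since $G$ is $(n-3)$-regular, $\overline G$ is $2$-regular and hence a disjoint union of cycles, each of length at least $3$. Second, Theorem \ref{t:HPR2} gives $\d(G)\le 5/4$. Third, Theorem \ref{t:BRS} forces $\d(G)$ to be a multiple of $1/4$. The argument then splits according to whether every component of $\overline G$ is a $C_3$ or some component has length at least $4$.

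If every component of $\overline G$ is a triangle (so $n=3m$ with $m\ge 2$), I would fix two such triangles $\{a_1,b_1,c_1\}$ and $\{a_2,b_2,c_2\}$ of $\overline G$; since all nine cross-pairs are edges of $G$, the $4$-cycle $a_1a_2b_1b_2a_1$ in $G$ yields $\d(G)\ge 1$ by Lemma \ref{p:c3}. To rule out $\d(G)=5/4$ I would apply Theorem \ref{t:delta2}: any cycle $g$ in $G$ with $L(g)\ge 5$ and a vertex $w\in g$ of induced degree $2$ would have at least $L(g)-3$ non-neighbors of $w$ inside $V(g)$; but $w$ has only two non-neighbors in $G$ (its partners in the $\overline G$-triangle containing $w$), forcing $L(g)=5$ and identifying those two non-neighbors $u_1,u_2$ with the two vertices of $g$ that are not cycle-neighbors of $w$. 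Then $u_1,u_2$ are consecutive along the $5$-cycle $g$, giving $u_1u_2\in E(G)$, which contradicts $u_1u_2\in E(\overline G)$. Hence $\d(G)=1$.

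If $\overline G$ has some component $C_k$ with $k\ge 4$, I will exhibit an explicit $5$-cycle $g$ in $G$ with a vertex $w$ satisfying $\deg_g(w)=2$, so Theorem \ref{t:delta2} forces $\d(G)\ge 5/4$ and hence $\d(G)=5/4$. When $k\ge 5$, labelling the $\overline G$-cycle $v_1\cdots v_kv_1$, the pentagon $g=v_1v_3v_5v_2v_4v_1$ uses only $\overline G$-non-adjacencies and hence lies in $G$, and in the subgraph induced by $V(g)$ the vertex $v_2$ is adjacent only to $v_4$ and $v_5$. When $k=4$, the four vertices of a $C_4=v_1v_2v_3v_4$ of $\overline G$ induce only the two disjoint edges $v_1v_3, v_2v_4$ in $G$, so a $5$-cycle cannot be built using only these four vertices; since $n\ge 5$ and every $\overline G$-component has size at least $3$, another component of $\overline G$ exists and contributes an $\overline G$-edge $u_1u_2$, and then $g=v_1u_1v_2u_2v_3v_1$ is a $5$-cycle of $G$ (the edges incident to $u_1, u_2$ lie in $G$ because $u_1, u_2$ are in a different $\overline G$-component than $v_1,\dots, v_4$), with $v_2$ having only $u_1, u_2$ as neighbors in $V(g)$.

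The main obstacle is the $k=4$ subcase, where the four vertices of a $C_4$ of $\overline G$ do not by themselves produce any $5$-cycle in $G$ and one must reach into a second component of $\overline G$; the alternating configuration $v_1u_1v_2u_2v_3$ is the key observation that simultaneously yields a $5$-cycle and a vertex of induced degree $2$.
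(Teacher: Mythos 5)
Your proof is correct, and it takes a genuinely different (and more self-contained) route than the paper's in both directions of the argument. The paper does not exploit the cycle decomposition of $\overline{G}$ at all: for the upper bound $\delta(G)\le 1$ in the triangle case it simply cites an external result (Lemma 5.7 of \cite{HPR}), and for the lower bound it invokes Dirac's condition $n-3\ge n/2$ to produce a Hamiltonian cycle of length $\ge 4$; in the remaining case it builds the required pentagon by a counting argument ($2(n-3)\ge n$ forces two common neighbors $v_3,v_4$ of $v$ with $v_1,v_2$, yielding the $5$-cycle $v,v_3,v_1,v_2,v_4,v$ with $\deg_g(v)=2$). You instead start from the observation that $\overline{G}$ is $2$-regular, hence a disjoint union of cycles, and read everything off that structure: a $4$-cycle across two triangles for $\delta(G)\ge 1$; the contrapositive of Theorem \ref{t:delta2} (each vertex has only two non-neighbors, so any qualifying cycle has length exactly $5$ and its two non-neighbors of $w$ would be forced to be $G$-adjacent, contradicting that they form $w$'s triangle in $\overline{G}$) to exclude $\delta(G)\ge 5/4$; and explicit pentagons $v_1v_3v_5v_2v_4v_1$ (for a component $C_k$, $k\ge 5$) or $v_1u_1v_2u_2v_3v_1$ reaching into a second component (for $k=4$) to force $\delta(G)\ge 5/4$ otherwise. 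Your treatment of the $k=4$ subcase is sound: a second component must exist since $n\ge 5$, and $v_2$ has induced degree $2$ on that pentagon because its only non-neighbors in $G$ are $v_1,v_3$. What your approach buys is independence from the external Lemma 5.7 of \cite{HPR} and from Dirac's theorem, at the cost of a slightly longer case analysis; the paper's argument is shorter but leans on that uncited-in-detail auxiliary lemma. Both correctly finish with Theorem \ref{t:HPR2} for $\delta(G)\le 5/4$ and Theorem \ref{t:BRS} to pin $\delta(G)$ to a multiple of $1/4$.
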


\begin{proof}
Assume first that $\overline G$ is a union of cycle graphs with three vertices (thus, $n\ge 6$).
\cite[Lemma 5.7]{HPR} gives that $\d(G)\le 1$.
Since $n\ge 6$, we have $n-3\ge n/2$ and there exists a Hamiltonian cycle with $n\ge 6$ vertices; thus, Lemma \ref{p:c3} gives that $\d(G)\ge 1$.

Assume now that $\overline G$ is not a union of cycle graphs with three vertices.
If $n=5$, then $G$ is a cycle graph with five vertices and $\d(G) = 5/4$.
Assume that $n\ge 6$.
Hence, there exists $v \in V(G)$ such that the connected component of $\overline G$ containing $v$ is not a cycle graph with three vertices.
Let $v_1,v_2$ be the vertices with $v_1v,v_2v \notin E(G)$.
Seeking for a contradiction assume that $v_1v_2 \notin E(G)$.
Thus, the connected component of $\overline G$ containing $v$ is the cycle graph with vertices $v,v_1,v_2,$ a contradiction.
Hence, $v_1v_2 \in E(G)$.
Since $n\ge 6$, we have $2(n-3) \ge n$ and there are at least two common neighbors of $v$ and $v_j$ for each $j=1,2$.
Therefore, there exist two different vertices $v_3,v_4$ with $v_3v,v_4v,v_3v_1,v_4v_2 \in E(G)$,
and we have the cycle $g$ given by $v,v_3,v_1,v_2,v_4,v$ in $G$.
Since $L(g) = 5$ and $\deg_g(v)=2$ (recall that $v_1v,v_2v \notin E(G)$), Theorem \ref{t:delta2} gives $\delta(G) \ge 5/4$.
Finally, Theorem \ref{t:HPR2} gives $\d(G)\le 5/4$.
\end{proof}

Theorem \ref{t:regular} has the following direct consequence.

\begin{corollary} \label{c:regular}
If $G$ is a $(n-3)$-regular graph with $n\ge 5$ vertices and $n$ is not a multiple of $3$, then $\d(G)=5/4$.
\end{corollary}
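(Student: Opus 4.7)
The plan is to derive this as an immediate consequence of Theorem \ref{t:regular}, using a degree-counting argument on the complement graph. Since $G$ is $(n-3)$-regular, each vertex of $\overline{G}$ has degree $n-1-(n-3) = 2$. Hence $\overline{G}$ is a $2$-regular graph, and so $\overline{G}$ decomposes as a disjoint union of cycle graphs.

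The key observation is then the following: if $\overline{G}$ were a disjoint union of cycle graphs with three vertices, the total number of vertices would be $3k$ for some positive integer $k$, which would force $n$ to be a multiple of $3$. Since by hypothesis $n$ is not a multiple of $3$, $\overline{G}$ cannot be a union of copies of $C_3$. Therefore the second alternative of Theorem \ref{t:regular} applies, yielding $\delta(G) = 5/4$.

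The only subtlety worth stating is that the hypothesis $n \ge 5$ ensures we are in the regime covered by Theorem \ref{t:regular}; in fact since $n$ is not divisible by $3$ and $n \ge 5$, we automatically have $n \ge 5$ with $n \neq 6$, so the conclusion applies without edge cases. There is essentially no obstacle in this argument — the heavy lifting has already been done in Theorem \ref{t:regular}, and the corollary is merely a parity-style observation on $n \bmod 3$.
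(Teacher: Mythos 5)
Your argument is correct and is exactly the intended deduction: the paper states the corollary as a ``direct consequence'' of Theorem \ref{t:regular}, and the reasoning is precisely that a disjoint union of triangles has $3k$ vertices, so $n\not\equiv 0\pmod 3$ rules out the first alternative of the theorem. The extra observation that $\overline{G}$ is $2$-regular is harmless but not even needed for the conclusion.
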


The following result provides sharp bounds for the hyperbolicity constant of the complement of any circular-arc graph
(even the circular-arc graphs $G$ with $\diam V(G) = 2$).
Note that it improves Theorem \ref{t:comple00} for circular-arc graphs;
recall that the most difficult case in the study of the complement of a graph are the graphs $G$ with $\diam V(G) = 2$
(this is the case if $\varrho(G) = 4$ or $\varrho(G) = 5$),
and that Theorem \ref{t:comple00} does not deal with these graphs.

\begin{theorem} \label{t:comple}
Let $G$ be a circular-arc graph.
If $\varrho(G) = 0$, then $0 \le \d(\overline{G}) \le 2$.
If $\varrho(G) > 4$, then $5/4 \le \d(\overline{G}) \le 3/2$.
If $\varrho(G) = 4$, then $0 \le \d(\overline{G}) \le 7/2$.
Furthermore, the lower bounds are sharp; in particular, they are attained by the cycle graphs for $\varrho(G) \ge 4$.
\end{theorem}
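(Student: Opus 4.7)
The plan is to proceed case by case on $\varrho(G)$. The case $\varrho(G)=0$ (interval graphs) reduces immediately to Theorem \ref{t:comple0}. For the case $\varrho(G)>4$, I intend to establish the upper bound $\d(\overline G)\le 3/2$ by showing $\diam V(\overline G)\le 2$, whence $\diam \overline G\le 3$ and Lemma \ref{dddd} applies. Given non-adjacent $u,v\in \overline G$ (so $I_u\cap I_v\neq\emptyset$), the arc $A:=\SS\setminus(I_u\cup I_v)$ is nonempty because $\varrho(G)>2$. If $m$ denotes the number of arcs of a minimum total set $K=\{v_1,\dots,v_{\varrho(G)}\}$ that meet $A$, then $\{u,v\}$ together with those $m$ arcs is itself a total set, so $m\ge\varrho(G)-2\ge 3$. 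Using the constraint that non-adjacent vertices in $\G_K\cong C_{\varrho(G)}$ have disjoint corresponding arcs, one deduces that at least one of these $\ge 3$ consecutive $K$-arcs meeting $A$ in fact lies entirely inside $A$, giving a common $\overline G$-neighbour of $u$ and $v$.

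For the lower bound $\d(\overline G)\ge 5/4$ when $\varrho(G)>4$, I would apply Theorem \ref{t:delta2} to a suitable $5$-cycle inside $\overline G$. Since $\overline G|_K=\overline{C_{\varrho(G)}}$, the cycle $g=v_1v_3v_5v_2v_4v_1$ is well-defined in $\overline{C_{\varrho(G)}}$ for $\varrho(G)\ge 5$, as each consecutive index difference along $g$ is at least $2\pmod{\varrho(G)}$ and hence an edge of $\overline{C_{\varrho(G)}}$. In the subgraph induced by $V(g)=\{v_1,v_2,v_3,v_4,v_5\}$, the vertex $v_2$ has as neighbours only $v_4$ and $v_5$ (since $v_2v_1$ and $v_2v_3$ are edges of $C_{\varrho(G)}$, and hence non-edges of $\overline{C_{\varrho(G)}}$), so $\deg_g(v_2)=2$ while $L(g)=5$, and Theorem \ref{t:delta2} gives $\d(\overline G)\ge 5/4$.

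For $\varrho(G)=4$, if $\diam V(G)\ge 3$ then Theorem \ref{t:comple00} immediately gives $\d(\overline G)\le 2\le 7/2$. The subcase $\diam V(G)\le 2$ is the most delicate; the plan is to bound $\diam V(\overline G)\le 6$ within each connected component of $\overline G$, so that Lemma \ref{dddd} yields $\d(\overline G)\le 7/2$. A key ingredient is that no vertex $w$ can have $I_w$ meeting all four arcs of $K$: otherwise $\{w,v_k,v_{k+1}\}$ for a suitable adjacent pair $v_k,v_{k+1}\in K$ would be a total set of size $3$, contradicting $\varrho(G)=4$; consequently every vertex has at least one $\overline G$-neighbour in $K$. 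Combined with $\overline G|_K=\overline{C_4}=2K_2$, this lets paths between vertices in a common component of $\overline G$ be routed through $K$ using at most one non-$K$ bridge between the two $K_2$-components of $\overline G|_K$, giving total length at most $6$. The lower bound $0$ in this case is trivial.

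Sharpness of the lower bounds is realised by cycle graphs, since $\varrho(C_n)=n$: for $n\ge 5$, $\overline{C_n}$ is $(n-3)$-regular and, as $n\ge 5$ implies $C_n$ is not a disjoint union of triangles, Theorem \ref{t:regular} gives $\d(\overline{C_n})=5/4$; for $n=4$, $\overline{C_4}=2K_2$ is a forest and $\d(\overline{C_4})=0$. The main obstacle I anticipate is the diameter analysis in the subcase $\varrho(G)=4$, $\diam V(G)\le 2$: because $\overline G|_K$ is disconnected, controlling the distance between vertices in $\overline G$ requires a careful classification of non-$K$ vertices according to the number and cyclic position of the $K$-arcs their corresponding arc meets, and exploiting the minimality of $K$ to rule out configurations that would produce contradictorily small total sets.
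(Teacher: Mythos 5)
Your overall strategy matches the paper's (case split on $\varrho(G)$, diameter bounds plus Lemma \ref{dddd} for the upper bounds, Theorem \ref{t:delta2} applied to the $5$-cycle $v_1v_3v_5v_2v_4v_1$ in $\overline{C_{\varrho(G)}}$ for the lower bound, and Theorem \ref{t:regular} for sharpness), and the cases $\varrho(G)=0$, the lower bounds, and the sharpness discussion are fine. For $\varrho(G)>4$ your route to the upper bound is genuinely different (you aim at $\diam V(\overline G)\le 2$ directly from the arc geometry, whereas the paper derives $\diam \overline G\le 3$ from Lemma \ref{l:14}), and the conclusion is true, but your justification is incomplete: from $m\ge 3$ and the disjointness of non-consecutive $K$-arcs one cannot conclude that some $K$-arc lies inside $A$, since two of the $m$ arcs may contain one endpoint of $A$ and one the other. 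The correct deduction is that if no $K$-arc lies in $A$, then every $K$-arc meeting $A$ contains an endpoint of $A$, and the two such arcs reaching deepest into $A$ from either end cover $A$; together with $I_u,I_v$ they form a total set of size $4$, contradicting $\varrho(G)\ge 5$ --- i.e., you still need the four-arc covering argument (Lemma \ref{l:15} in disguise), not just a count.

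The genuine gap is in the case $\varrho(G)=4$. Your claim that the two components of $\overline G|_K=2K_2$ are joined through ``at most one non-$K$ bridge'' vertex is not justified and is false in general: one can have a vertex $w_1$ whose arc misses only $I_1$ (so $w_1$ is $\overline G$-adjacent to $v_1$ alone among $K$) and a vertex $w_2$ whose arc misses only $I_2$, with $I_{w_1}\cap I_{w_2}=\emptyset$, so that the shortest $\overline G$-path between $\{v_1,v_3\}$ and $\{v_2,v_4\}$ is $v_1w_1w_2v_2$ of length $3$ and $d_{\overline G}(v_3,v_4)=5$. Your routing then only yields $\diam V(\overline G)\le 7$, hence $\d(\overline G)\le 4$, which misses the stated bound $7/2$. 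The paper avoids this by not routing through the fixed set $K$ at all: it takes a geodesic triangle with $d_{\overline G}(x,y)\ge 9/2$, extracts five consecutive vertices $v_0,\dots,v_4$ on $[xy]$ with $p$ within $1/2$ of $v_2$, applies Lemma \ref{l:15} to the four arcs $I_{v_0},I_{v_1},I_{v_3},I_{v_4}$ (which cover $\SS$ because $v_0v_1,v_3v_4\notin E(G)$ while all four cross-pairs are edges of $G$), and then uses the minimality of $\varrho(G)=4$ to show every vertex of $\overline G$ is $\overline G$-adjacent to one of $v_0,v_1,v_3,v_4$, giving $d_{\overline G}(p,[xz]\cup[yz])\le \frac12+2+1=\frac72$ directly. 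To repair your argument you would need to apply your ``no arc meets all four'' observation to these four geodesic arcs rather than to the arcs of $K$.
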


\begin{proof}
If $\varrho(G)=0$, then Theorem \ref{t:comple0} gives the result.

Assume now that $\varrho(G) > 4$.
We are going to prove that $\diam \overline G \le 3$
(note that it is possible to have $\diam V(G) = 2$, and that the inequality $\diam \overline G \le 3$ is stronger than $\diam V(\overline G ) \le 3$).
Seeking for a contradiction assume that $\diam \overline G > 3$.

Assume first that $\diam V(\overline{G}) \ge 4$.
Thus,
there exist $v,w \in V(\overline{G})$ with $d_{\overline G}(v,w)=4$.
Let $v_0=v,v_1,v_2,v_3,v_4=w \in V(\overline{G})$ such that $v_{j-1}v_j\in E(\overline{G})$ for $1 \le j \le 4$.
Therefore, $v_0$ and $v_1$ have two common neighbors $v_3,v_4$ in $G$ with $v_{3}v_4\notin E(G)$,
and Lemma \ref{l:14} gives that $v_0$ and $v_1$ are neighbors in $G$.
This contradicts $v_{0}v_1\in E(\overline{G})$.

Assume that $\diam V(\overline{G}) = 3$.
Thus, there exist $v \in V(\overline{G})$ and a midpoint $x$ of an edge $v_3v_3'$ in $\overline G$ with $d_{\overline G}(v,x)=7/2$.
Hence,
there exist $v_0=v,v_1,v_2 \in V(\overline{G})$ such that $v_{j-1}v_j \in E(\overline{G})$ for $1 \le j \le 3$.
Therefore, $v_3$ and $v_3'$ have two common neighbors $v_0,v_1$ in $G$ with $v_{0}v_1\notin E(G)$,
and Lemma \ref{l:14} gives that $v_3$ and $v_3'$ are neighbors in $G$.
This contradicts $v_{3}v_3'\in E(\overline{G})$.

Hence, $\diam \overline G \le 3$ and Lemma \ref{dddd} gives $\d(\overline{G}) \le 3/2$.

In order to prove the lower bound, consider a cycle $C$ in $G$ given by $v_1,v_2,\dots,v_{\varrho(G)},v_1$ such that the subgraph induced by this vertices is $C$.
Consider the cycle $g$ in $\overline G$ given by $v_3,v_5,v_2,v_4,v_1,v_3$.
Since $L(g) = 5$ and $\deg_g(v_3)=2$ (recall that $v_3v_2,v_3v_4 \notin E(\overline G )$), Theorem \ref{t:delta2} gives $\d(\overline G ) \ge 5/4$.

Consider now the cycle graph with $\varrho$ vertices $C_\varrho$.
Since $\varrho > 4$, $\overline C_\varrho$ is $(\varrho-3)$-regular and its complement is $C_\varrho$,
Theorem \ref{t:regular} gives $\d\big(\,\overline C_\varrho \big) = 5/4$ and the bound is attained.

\smallskip

Finally, assume that $\varrho(G) = 4$
and consider a geodesic triangle $T=\{x,y,z\}$ in $\overline{G}$ and $p\in [xy]$.
By Theorem \ref{t:BRS}, we can assume that $x,y,z \in J(\overline{G})$.
If $d_{\overline G}(x,y) \le 4$, then $d_{\overline G}(p,[xz]\cup [yz]) \le 2 < 7/2$.
Assume that $d_{\overline G}(x,y) > 4$.
Since $x,y,z \in J(\overline{G})$, we have $d_{\overline G}(x,y) \ge 9/2$.
Thus, there exist $u,v \in V(\overline{G}) \cap [xy]$ with $d_{\overline G}(u,v) =4$
and vertices $v_0=u,v_1,v_2,v_3,v_4=v \in V(\overline{G}) \cap [xy]$
such that $v_{j-1}v_j\in E(\overline{G})$ for $1 \le j \le 4$ and $d_{\overline G}(p,v_2) \le 1/2$.
Therefore, $v_0$ and $v_1$ have two common neighbors $v_3,v_4$ in $G$, and $v_{0}v_1,v_{3}v_4\notin E(G)$.
Hence, Lemma \ref{l:15} gives that their corresponding arcs satisfy $I_{v_0}\cup I_{v_1}\cup I_{v_3}\cup I_{v_4} = \SS$.
Seeking for a contradiction assume that there exists a vertex $w_0 \in V(\overline{G})$ with corresponding arc $I_{w_0}$
such that $I_{w_0}\cap I_{v_j}\neq \emptyset$ for $j=0,1,3,4$.
Since $I_{v_0}\cup I_{v_1}\cup I_{v_3}\cup I_{v_4} = \SS$, there exist $i,j \in \{0,1,3,4\}$ with
$I_{w_0}\cup I_{v_i}\cup I_{v_j} = \SS$.
This contradicts $\varrho(G) = 4$, and so every vertex $w_0 \in V(\overline{G})$ has at most three neighbors in $\{v_{0},v_1,v_{3}v_4\}$ in $G$.
Thus, given any vertex $w_0 \in V(\overline{G}) \cap ([xz]\cup [yz])$, there exists $k \in \{0,1,3,4\}$ with
$w_{0}v_k\notin E(G)$, and
$$
d_{\overline G}(p,[xz]\cup [yz])
\le d_{\overline G}(p,w_0)
\le d_{\overline G}(p,v_2) + d_{\overline G}(v_2,v_k) + d_{\overline G}(v_k,w_0)
\le \frac12 +2 + 1
= \frac72 \,.
$$
So, $\d(\overline{G}) \le 7/2$.

The lower bound $\d(\overline{G}) \ge 0$ trivially holds.
If we consider the cycle graph $G=C_4$, then $\overline{G}$ is the union of two disjoint edges and
$\d(\overline{G}) = 0$.
Hence, the lower bound is attained.
\end{proof}

\begin{remark}
Note that Theorems \ref{t:BRS} and \ref{t:comple}
give that if $G$ is a circular-arc graph with $\varrho(G) > 4$, then we have either $\d(\overline{G}) = 5/4$ or $\d(\overline{G}) = 3/2$.
\end{remark}


Theorems \ref{t:main} and \ref{t:comple} have the following consequence.

\begin{corollary} \label{c:comple}
If $G$ is a circular-arc graph with $\varrho(G) \ge 7$, then $\d(\overline{G}) < \d(G)$.
\end{corollary}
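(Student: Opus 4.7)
The plan is to obtain the desired strict inequality by simply chaining together the two preceding theorems, matching a lower bound on $\d(G)$ with an upper bound on $\d(\overline{G})$.

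First I would apply Theorem \ref{t:main}: since $\varrho(G) \ge 7$, in particular $\varrho(G) \notin \{1,2\}$, so the lower bound gives
\[
\d(G) \ge \tfrac{1}{4}\,\varrho(G) \ge \tfrac{7}{4}.
\]
Next I would apply Theorem \ref{t:comple}: since $\varrho(G) \ge 7 > 4$, the relevant clause of that theorem yields the upper bound
\[
\d(\overline{G}) \le \tfrac{3}{2}.
\]
Combining the two, $\d(\overline{G}) \le 3/2 < 7/4 \le \d(G)$, which is the claimed strict inequality.

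There is essentially no obstacle here; the corollary is a direct numerical consequence of the two theorems and no separate construction or case analysis is needed. The only point worth noting is why the threshold is $\varrho(G)\ge 7$ rather than something smaller: the upper bound $\d(\overline{G})\le 3/2$ from Theorem \ref{t:comple} requires $\varrho(G)>4$, and to make the lower bound $\varrho(G)/4$ on $\d(G)$ strictly exceed $3/2$ we need $\varrho(G)\ge 7$ (since $\d(G)$ is a multiple of $1/4$ by Theorem \ref{t:BRS}, $\varrho(G)=7$ already gives $\d(G)\ge 7/4>3/2$).
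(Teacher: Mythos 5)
Your proof is correct and is exactly the argument the paper intends: the corollary is stated as a direct consequence of Theorems \ref{t:main} and \ref{t:comple}, combining $\d(G)\ge \varrho(G)/4\ge 7/4$ with $\d(\overline{G})\le 3/2$ for $\varrho(G)>4$. (Your closing remark about Theorem \ref{t:BRS} is harmless but unnecessary, since $7/4>3/2$ already follows from the lower bound $\varrho(G)/4$ alone.)
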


In 1956, Nordhaus and Gaddum gave lower and upper bounds on the sum and the product
of the chromatic number of a graph and its complement in \cite{NG}.
Since then, relations of a similar type have been proposed for many other graph invariants,
in several hundred papers (see, e.g., \cite{AH}).

Also, Theorems \ref{t:main} and \ref{t:comple} provide some Nordhaus-Gaddum type results.

\begin{corollary} \label{t:prodsum}
If $G$ is a circular-arc graph, then
$$
\begin{aligned}
\frac{5\varrho(G)}{16}
\le \delta(G) \delta(\overline{G}) \le
\frac{3\varrho(G)}{8} + \frac{9}{4}\,,
\qquad
& \frac{\varrho(G)+5}{4}
\le \delta(G)+ \delta(\overline{G}) \le
\frac{\varrho(G)}{4} + 3 ,
\qquad \text{if }\;  \varrho(G) > 4,
\\
0
\le \delta(G) \delta(\overline{G}) \le
\frac{7\varrho(G)}{8} + \frac{21}{4}\,,
\qquad
& \frac{\varrho(G)}{4}
\le \delta(G)+ \delta(\overline{G}) \le
\frac{\varrho(G)}{4} + 5 ,
\qquad \text{if }\;  \varrho(G) = 4,
\\
0
\le \delta(G) \delta(\overline{G}) \le
\frac{\varrho(G)}{2} + 3 ,
\qquad
& \frac{\varrho(G)}{4}
\le \delta(G)+ \delta(\overline{G}) \le
\frac{\varrho(G)}{4} + \frac{7}{2}\,,
\qquad \text{if }\;  \varrho(G) = 0.
\end{aligned}
$$
\end{corollary}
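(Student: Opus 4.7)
The proof is essentially a direct combination of the bounds already established in Theorems \ref{t:main} and \ref{t:comple} (together with Theorem \ref{034} for the interval case), so the plan is to verify each of the three cases by multiplying and adding the appropriate pairs of inequalities.

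First I would handle the case $\varrho(G) > 4$. From Theorem \ref{t:main}, since $\lfloor \varrho(G)/2 \rfloor \le \varrho(G)/2$, one has the chain
\[
\frac{\varrho(G)}{4} \le \delta(G) \le \frac{1}{2}\Big\lfloor \frac{\varrho(G)}{2} \Big\rfloor + \frac{3}{2} \le \frac{\varrho(G)}{4} + \frac{3}{2}.
\]
Theorem \ref{t:comple} gives $5/4 \le \delta(\overline{G}) \le 3/2$. Multiplying the two two-sided inequalities yields
\[
\frac{5\varrho(G)}{16} \le \delta(G)\delta(\overline{G}) \le \Big(\frac{\varrho(G)}{4} + \frac{3}{2}\Big)\frac{3}{2} = \frac{3\varrho(G)}{8} + \frac{9}{4},
\]
and adding them yields $(\varrho(G)+5)/4 \le \delta(G)+\delta(\overline{G}) \le \varrho(G)/4 + 3$.

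Next I would treat the case $\varrho(G) = 4$. Theorem \ref{t:main} yields $\delta(G) \le \tfrac{1}{2}\cdot 2 + \tfrac{3}{2} = \tfrac{5}{2}$ together with $\delta(G) \ge \varrho(G)/4 = 1$, and Theorem \ref{t:comple} yields $0 \le \delta(\overline{G}) \le 7/2$. The product bound then follows from $\delta(G)\delta(\overline{G}) \le \tfrac{5}{2}\cdot\tfrac{7}{2} = \tfrac{35}{4} = \tfrac{7\varrho(G)}{8} + \tfrac{21}{4}$ (with the trivial lower bound $0$), while the sum bound follows from $\delta(G)+\delta(\overline{G}) \le \tfrac{5}{2}+\tfrac{7}{2} = 6 = \varrho(G)/4 + 5$ and $\delta(G)+\delta(\overline{G}) \ge \varrho(G)/4$.

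Finally, for $\varrho(G)=0$, i.e., $G$ is an interval graph, Theorem \ref{034} gives $\delta(G) \le 3/2$, and Theorem \ref{t:comple} (or Theorem \ref{t:comple0}) gives $\delta(\overline{G}) \le 2$. Hence $\delta(G)\delta(\overline{G}) \le 3 = \varrho(G)/2 + 3$ and $\delta(G)+\delta(\overline{G}) \le 7/2 = \varrho(G)/4 + 7/2$; the lower bounds $0 \le \delta(G)\delta(\overline{G})$ and $\varrho(G)/4 = 0 \le \delta(G)+\delta(\overline{G})$ are immediate. Since every step reduces to arithmetic with the bounds already proved, there is no real obstacle; the only point requiring mild care is replacing $\tfrac{1}{2}\lfloor \varrho(G)/2 \rfloor$ by the clean majorant $\varrho(G)/4$ to obtain expressions linear in $\varrho(G)$.
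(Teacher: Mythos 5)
Your proposal is correct and is exactly the intended argument: the paper states this corollary as a direct consequence of Theorems \ref{t:main} and \ref{t:comple} (with Theorem \ref{034} covering the interval case), and your case-by-case multiplication and addition of those bounds, using $\frac12\lfloor\varrho(G)/2\rfloor\le\varrho(G)/4$, reproduces every stated constant. All the arithmetic checks out.
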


If $G$ is a graph with edges $E(G)=\{e_i\}_{i\in\mathcal{I}}$, the \emph{line graph} $\L(G)$ of $G$ is a graph which has a vertex $v_{e_i}\in V(\L(G))$
for each edge $e_i$ of $G$, and an edge joining $v_{e_i}$ and $v_{e_j}$ when $e_i \cap e_j \neq \emptyset$.
The line graph of $G$ is interesting in the theory of geometric graphs, since it is the intersection graph of $E(G)$.

A graph is \emph{chordal} if all cycles of length at least four have a chord, which is an edge that is not part of the cycle but connects two vertices of the cycle
(i.e., it does not have induced cycles of length greater than three).

The following result appears in \cite[Lemma 2.2]{BHB1}.

\begin{lemma} \label{l:BHB1}
Suppose that $G$ is chordal, and that $x_1, x_2,\dots,x_n, x_1$ is a cycle in $G$, where $n \ge 4$.
If $d(x_1,x_3) = 2$, then there exists $i \in \{4, 5,\dots ,n\}$ such that $x_ix_2 \in E(G)$.
\end{lemma}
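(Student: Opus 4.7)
The plan is to proceed by induction on the length $n$ of the cycle, exploiting the fact that chordality forces the existence of a chord which will let us either conclude immediately or shorten the cycle while preserving the key structure ``$x_1 x_2 x_3$ is a consecutive subpath and $d(x_1,x_3)=2$ in $G$''.

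For the base case $n=4$, the $4$-cycle $x_1 x_2 x_3 x_4 x_1$ must possess a chord by chordality, and the only two candidates are $x_1 x_3$ and $x_2 x_4$. The hypothesis $d(x_1,x_3)=2$ rules out $x_1 x_3 \in E(G)$, so $x_2 x_4 \in E(G)$ and $i=4$ works.

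For the inductive step, assume $n \ge 5$ and the result holds for all cycle lengths in $\{4,\dots,n-1\}$. Pick any chord $x_a x_b$ of the cycle (with $a<b$, $b-a \ge 2$, $(a,b)\ne(1,n)$). Split into three cases:
\emph{(i)} If $a=2$, then $b\in\{4,\dots,n\}$ and the chord itself is the required edge.
\emph{(ii)} If $a=1$, then since $d(x_1,x_3)=2$ forces $b\ne 3$, we have $b\in\{4,\dots,n-1\}$; the subcycle $x_1 x_2 x_3 \dots x_b x_1$ has length $b\in\{4,\dots,n-1\}$, still contains $x_1 x_2 x_3$ as a consecutive subpath, and still satisfies $d(x_1,x_3)=2$ (a property of $G$, not of the cycle), so induction yields the desired $i\in\{4,\dots,b\}\subseteq\{4,\dots,n\}$.
\emph{(iii)} If $a\ge 3$, then the subcycle $x_1 x_2 \dots x_a x_b x_{b+1} \dots x_n x_1$ (obtained by replacing the long arc through $x_{a+1},\dots,x_{b-1}$ by the chord) has length $n-b+a+1\in\{4,\dots,n-1\}$, still contains $x_1 x_2 x_3$ as a consecutive subpath, and satisfies the distance hypothesis; relabel its vertices starting at $x_1$ and apply induction to produce a neighbour of $x_2$ in this shorter cycle, which then translates back to some $x_j$ with $j\in\{4,\dots,a\}\cup\{b,\dots,n\}\subseteq\{4,\dots,n\}$.

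The main obstacle is the index bookkeeping in case \emph{(iii)}: one must check that the reduced cycle is genuinely shorter than $n$ but still of length at least $4$, and that relabeling preserves the ``$y_1 y_2 y_3$ is a path'' convention so that the inductive hypothesis applies verbatim. Once these bounds $n-b+a+1\in\{4,\dots,n-1\}$ are verified (using $b-a\ge 2$ and $a\ge 3$), the translation of the conclusion back to the original cycle is straightforward, and the three cases together close the induction.
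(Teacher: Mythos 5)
Your proof is correct. Note that the paper itself gives no argument for this statement: it is quoted verbatim from Brinkmann--Koolen--Moulton \cite[Lemma 2.2]{BHB1}, so there is nothing internal to compare against; you have supplied a valid self-contained proof. The induction is sound: the base case $n=4$ correctly uses that $x_1x_3\notin E(G)$ forces the chord $x_2x_4$, and in the inductive step every chord $x_ax_b$ falls into exactly one of your three cases, with the reduced cycle always keeping $x_1x_2x_3$ as a consecutive subpath and having length $n-(b-a)+1\in\{4,\dots,n-1\}$ (the bounds follow from $b-a\ge 2$ and $a\ge 3$, as you note), so the conclusion pulls back to an index in $\{4,\dots,a\}\cup\{b,\dots,n\}\subseteq\{4,\dots,n\}$.
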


We want to prove a similar result for $\L(G)$.
In order to do it we need some background.

Given $v_e \in V(\L(G))$, let us define $h(v_e)$ as the midpoint of the edge $e\in E(G)$ and $H(v_e)=e$.
Thus, $h$ and $H$ are maps with $h: V(\L(G)) \rightarrow G$ and $H: V(\L(G)) \rightarrow E(G)$.

\cite[Remark 3.3]{CRS} gives that the map $h$ is an isometry:

\begin{lemma} \label{l:CRS}
For every $x,y \in V (\L(G))$, we have
$$
d_{\L(G)}(x, y) = d_G(h(x), h(y)).
$$
\end{lemma}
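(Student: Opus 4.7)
The plan is to prove equality by establishing both inequalities, using the natural correspondence between edges of $G$ sharing a vertex and adjacent vertices in $\L(G)$.

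First I would show the inequality $d_G(h(x),h(y)) \le d_{\L(G)}(x,y)$. Let $v_{e_0}=x, v_{e_1},\dots, v_{e_k}=y$ be a geodesic in $\L(G)$, so $k=d_{\L(G)}(x,y)$. By definition of the line graph, consecutive edges $e_i, e_{i+1}$ share a vertex $w_i \in V(G)$ for $0 \le i \le k-1$. Then in $G$ we have a walk from $h(x)=\text{midpoint}(e_0)$ to $w_0$ of length $1/2$, then along $e_1, e_2, \dots, e_{k-1}$ from $w_0$ to $w_{k-1}$ of length at most $k-1$, and finally from $w_{k-1}$ to $h(y)=\text{midpoint}(e_k)$ of length $1/2$. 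The total length is at most $1/2 + (k-1) + 1/2 = k$, giving the desired bound.

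For the reverse inequality $d_{\L(G)}(x,y) \le d_G(h(x),h(y))$, I would split into cases. If $H(x)=H(y)$ then $x=y$ and both distances are $0$. If $H(x)\neq H(y)$ but $H(x) \cap H(y) \neq \emptyset$, then $xy\in E(\L(G))$ so $d_{\L(G)}(x,y)=1$, while the two midpoints are joined in $G$ through the common endpoint via a path of length $1$, so $d_G(h(x),h(y))=1$. If $H(x)\cap H(y) = \emptyset$, write $H(x)=uv$, $H(y)=u'v'$. Any geodesic in $G$ from $h(x)$ to $h(y)$ must leave $H(x)$ through one endpoint (cost $1/2$), travel through some sequence of complete edges $f_1=w_0w_1, f_2=w_1w_2, \dots, f_\ell=w_{\ell-1}w_\ell$ in $G$ (where $w_0\in\{u,v\}$ and $w_\ell\in\{u',v'\}$), and then enter $H(y)$ through $w_\ell$ (cost $1/2$), giving $d_G(h(x),h(y)) = 1/2+\ell+1/2=\ell+1$. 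Each $f_i,f_{i+1}$ share $w_i$ and therefore $v_{f_i}v_{f_{i+1}}\in E(\L(G))$; moreover $H(x)$ shares $w_0$ with $f_1$ and $H(y)$ shares $w_\ell$ with $f_\ell$. Hence $x,v_{f_1},\dots,v_{f_\ell},y$ is a path in $\L(G)$ of length $\ell+1 = d_G(h(x),h(y))$, establishing the bound.

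Combining both inequalities yields the equality. The construction is essentially routine; the only subtle point is ensuring that the path-to-walk conversions use the midpoint offset of $1/2$ correctly at both ends and correctly handle the disjoint-edge case, where the full edges $f_1,\ldots,f_\ell$ between $H(x)$ and $H(y)$ must be converted one-to-one into an interior segment of the $\L(G)$-path. I do not anticipate a serious obstacle; the statement is ultimately a clean consequence of the definition of the line graph together with the fact that midpoints lie halfway along their edges.
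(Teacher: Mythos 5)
Your proof is correct. The paper does not prove this lemma itself --- it simply quotes it from \cite[Remark 3.3]{CRS} --- so there is no in-paper argument to compare against, but your two-inequality argument (converting an $\L(G)$-geodesic into a $G$-walk with a $1/2$ offset at each end, and conversely lifting the edge sequence of a $G$-geodesic between the two midpoints to a path in $\L(G)$) is the standard and complete proof of this fact, with the edge cases ($H(x)=H(y)$, adjacent edges, disjoint edges) handled properly.
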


\begin{lemma} \label{l:BHB2}
Suppose that $G$ is chordal, and that $u_1, u_2,\dots,u_n, u_1$ is a cycle in $\L(G)$, where $n \ge 6$.
If $d_{\L(G)}(u_1,u_4) = 3$, then there exists $i \in \{4, 5,\dots ,n\}$ and $u\in V(\L(G))$ such that $u_2u,u_3u,u_{i}u,u_{i+1}u\in E(\L(G))$, where $u_{n+1}=u_1$.
\end{lemma}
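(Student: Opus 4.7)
The plan is to translate the problem to the base graph $G$ via Lemma \ref{l:CRS} and then apply Lemma \ref{l:BHB1} to a suitable cycle in $G$ naturally extracted from the given cycle in $\L(G)$.

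For each $j$, set $e_j := H(u_j)$, and let $y_j$ denote the common endpoint of $e_j$ and $e_{j+1}$, which exists because $u_j u_{j+1} \in E(\L(G))$; thus $e_j = y_{j-1} y_j$ with indices taken modulo $n$. By Lemma \ref{l:CRS}, $d_G(h(u_1), h(u_4)) = 3$, and the natural path in $G$ from $h(u_1)$ to $h(u_4)$ passing through $y_1, y_2, y_3$ has length exactly $3$. Any coincidence among $y_0, y_1, y_2, y_3, y_4$, or the existence of the edge $y_1 y_3$ in $G$, would produce a strictly shorter path in $G$ between the midpoints of $e_1$ and $e_4$, contradicting $d_G = 3$. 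Hence $y_0, y_1, y_2, y_3, y_4$ are pairwise distinct, $y_1 y_3 \notin E(G)$, and $d_G(y_1, y_3) = 2$.

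Consider the closed walk $W = y_0 y_1 \cdots y_{n-1} y_0$ of length $n$ in $G$. In the favorable case that $W$ is a simple cycle (its length being $n \geq 6 \geq 4$), Lemma \ref{l:BHB1} applied with $y_1, y_2, y_3$ playing the role of $x_1, x_2, x_3$ produces an index $\ell \in \{4, \dots, n\}$, with $y_n$ interpreted as $y_0$, such that $y_2 y_\ell \in E(G)$. Setting $f := y_2 y_\ell$ and $u := v_f \in V(\L(G))$, one checks directly that $H(u) = f$ shares the vertex $y_2$ with each of $e_2 = y_1 y_2$ and $e_3 = y_2 y_3$, and shares the vertex $y_\ell$ with each of $e_\ell = y_{\ell-1} y_\ell$ and $e_{\ell+1} = y_\ell y_{\ell+1}$. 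Therefore, taking $i := \ell$ with the convention $u_{n+1} = u_1$, the required edges $u_2 u, u_3 u, u_i u, u_{i+1} u$ all lie in $E(\L(G))$.

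The main obstacle is the case where $W$ is not simple, i.e. some $y_a = y_b$ with $a \neq b$; the pairwise distinctness of $y_0, \dots, y_4$ forces $\{a, b\} \not\subseteq \{0, 1, 2, 3, 4\}$. My plan here is to extract from $W$ a simple closed sub-walk still containing the consecutive triple $y_1, y_2, y_3$, by iteratively short-circuiting $W$ at the first pair of positions where it revisits a vertex (verifying at each step that the shortcut does not delete $y_1$, $y_2$ or $y_3$, which is guaranteed by their established distinctness). After finitely many steps one obtains a simple cycle $C$ in $G$ of length at least $4$ containing $y_1, y_2, y_3$ as three consecutive vertices, to which Lemma \ref{l:BHB1} can then be applied. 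The resulting edge $y_2 y_{\ell'}$ in $G$ produces a vertex $u = v_{y_2 y_{\ell'}} \in V(\L(G))$; the remaining indexing work is to identify $\ell'$ with an index $i$ in the original cycle $u_1, \dots, u_n$ so that $y_{\ell'}$ is a common endpoint of $e_i$ and $e_{i+1}$, which is precisely the step where chordality of $G$ (together with the choice of shortcut) is needed to guarantee that such an $i$ exists.
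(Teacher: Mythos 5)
Your main line of argument is exactly the paper's: push everything down to $G$ via $h$ and $H$, check that $d_G(y_1,y_3)=2$ where $y_j$ is the common endpoint of $H(u_j)$ and $H(u_{j+1})$, apply Lemma \ref{l:BHB1} to the corresponding cycle in $G$ with $y_1,y_2,y_3$ in the roles of $x_1,x_2,x_3$, and lift the resulting chord $y_2y_\ell$ to the vertex $u=v_{y_2y_\ell}$ of $\L(G)$, which meets $e_2,e_3$ at $y_2$ and $e_\ell,e_{\ell+1}$ at $y_\ell$. When the closed walk $y_0y_1\cdots y_{n-1}y_0$ is a simple cycle, your argument is complete and coincides with the published proof, which simply speaks of ``the corresponding cycle $C_0$ in $G$'' and applies Lemma \ref{l:BHB1} to it.

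Where you go beyond the paper is the non-simple case, and you are right that it is a real issue the paper passes over: the hypothesis $d_{\L(G)}(u_1,u_4)=3$ only forces $y_0,\dots,y_4$ to be pairwise distinct, and repetitions among the remaining $y_k$ are not excluded. But what you offer there is an announced plan rather than a proof, and it has two soft spots. First, your claim that the distinctness of $y_0,\dots,y_4$ guarantees that short-circuiting never deletes $y_1,y_2,y_3$ does not cover the case $y_k=y_2$ for some $k\ge 5$; cutting at such a repetition can destroy the consecutive triple, and that subcase needs its own (short) argument, e.g.\ using that $e_2,e_3,e_k,e_{k+1}$ all meet at $y_2$. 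Second, you have mislocated the remaining difficulty: the ``indexing work'' you defer is actually immediate, since every vertex of the walk other than $y_1,y_2,y_3$ is some $y_\ell$ with $\ell\in\{4,\dots,n\}$ (reading $y_0$ as $y_n$) and is by definition a common endpoint of $e_\ell$ and $e_{\ell+1}$, so $i=\ell$ always serves; no chordality is needed at that step. In short: correct and identical to the paper on the main case, with a genuinely observed but not fully closed gap in the degenerate case.
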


\begin{proof}
Denote by $C$ the cycle $u_1, u_2,\dots,u_n, u_1$ and by $C_0$ its corresponding cycle in $G$.
Lemma \ref{l:CRS} gives $d_G(h(u_1), h(u_4))=d_{\L(G)}(u_1,u_4) = 3$.
Thus, the vertices $H(u_1)\cap H(u_2)$ and $H(u_3)\cap H(u_4)$ in $C_0$ satisfy
$d_G(H(u_1)\cap H(u_2),H(u_3)\cap H(u_4))= 2$, and Lemma \ref{l:BHB1} gives that there exists
$i \in \{4, 5,\dots ,n\}$ with
$d_G(H(u_2)\cap H(u_3),H(u_i)\cap H(u_{i+1}))= 1$.
If we denote by $u$ the corresponding vertex in $\L(G)$ to the edge in $G$ with endpoints $H(u_2)\cap H(u_3)$ and $H(u_i)\cap H(u_{i+1})$,
then $u_2u,u_3u,u_{i}u,u_{i+1}u\in E(\L(G))$.
\end{proof}

The following result in \cite[Corollary 3.12]{CRS} relates the hyperbolicity constants of $G$ and $\L(G)$.

\begin{theorem} \label{c:IneqLineGraphk}
For any graph $G$ we have
$$
\d(G) \le \d(\L(G)) \le 5 \d(G) + \frac{5}2\,.
$$
\end{theorem}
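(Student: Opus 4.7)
My plan is to treat the two inequalities separately, exploiting a natural map $\Psi\colon\L(G)\to G$ that extends $h$: on vertices, $\Psi(v_e)=h(v_e)$ is the midpoint of $e$, and on each edge $v_ev_{e'}$ of $\L(G)$ (where $e,e'$ share a vertex $v$ of $G$), $\Psi$ sends this edge linearly to the length-one path $m_e\to v\to m_{e'}$ in $G$. This $\Psi$ is $1$-Lipschitz and, by Lemma~\ref{l:CRS}, restricts to an isometry between $V(\L(G))$ and the set of midpoints of edges of $G$. A direct case analysis (distinguishing the two possibilities for each endpoint: a vertex of $\L(G)$ versus a midpoint of an edge of $\L(G)$) shows that $\Psi$ is a $(1,2)$-quasi-isometry of $J(\L(G))$ into $J(G)$, whose image is $\tfrac{1}{2}$-dense in $G$.

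For the lower bound $\delta(G)\le \delta(\L(G))$, I would apply Theorem~\ref{t:BRS} to obtain a geodesic triangle $T=\{x,y,z\}$ in $G$ with $x,y,z\in J(G)$ and a point $p\in[xy]$ witnessing $d_G(p,[xz]\cup[yz])=\delta(G)$. After replacing any vertex of $T$ that lies in $V(G)$ by the midpoint of an incident edge (at controlled cost in the thin constant), one obtains $x,y,z\in h(V(\L(G)))$, which lift uniquely to vertices of $\L(G)$ via $h^{-1}$. Since $h$ is isometric on $V(\L(G))$, the lifted geodesic triangle in $\L(G)$ has thin constant at least $\delta(G)$, and the fact that both hyperbolicity constants are multiples of $1/4$ (Theorem~\ref{t:BRS}) ensures the perturbation causes no loss in the final bound.

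For the upper bound $\delta(\L(G))\le 5\delta(G)+\tfrac{5}{2}$, I would take a geodesic triangle $T=\{x,y,z\}$ in $\L(G)$ with $x,y,z\in J(\L(G))$ realizing $\delta(\L(G))$ (Theorem~\ref{t:BRS}), and project to $T'=\{\Psi(x),\Psi(y),\Psi(z)\}$ in $G$. Each side of $T$, being a geodesic in $\L(G)$, has image under $\Psi$ that is a $(1,2)$-quasi-geodesic in the $\delta(G)$-hyperbolic space $G$; the Morse lemma (stability of quasi-geodesics in hyperbolic spaces) then says these quasi-geodesics stay within Hausdorff distance $2\delta(G)+O(1)$ of the corresponding geodesic sides of $T'$. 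Combining the $\delta(G)$-thinness of $T'$ with these tracking estimates on the three sides of $T$, and using the $(1,2)$-quasi-isometry property of $\Psi$ to transfer the bound back from $G$ to $\L(G)$, yields the claimed inequality.

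The hardest part is the upper bound: $\Psi$ is genuinely \emph{not} an isometry on the interior of edges of $\L(G)$, because high-degree vertices of $G$ correspond to large cliques in $\L(G)$ in which distances between midpoints of distinct edges can exceed the corresponding distances in $G$ by as much as $2$. Obtaining the precise constants $5$ and $\tfrac{5}{2}$ requires a careful arithmetic accounting of the Morse-lemma tracking constant for $(1,2)$-quasi-geodesics, applied on the three sides of $T$, together with the additive errors of $\Psi$ on both the forward and backward estimates; a more cavalier application of quasi-isometry invariance of hyperbolicity would produce a strictly weaker multiplicative coefficient than $5$.
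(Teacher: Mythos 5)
The paper does not actually prove Theorem~\ref{c:IneqLineGraphk}: it is imported verbatim from \cite[Corollary 3.12]{CRS}, so there is no internal proof to measure your argument against. Your framework --- the extension $\Psi$ of $h$, which is $1$-Lipschitz, isometric on $V(\L(G))$ by Lemma~\ref{l:CRS}, and distorts distances by an additive error of at most $2$ --- is correctly set up and is the right machinery. Nevertheless, as written the argument has two genuine gaps. For the lower bound, the step ``replace any corner lying in $V(G)$ by the midpoint of an incident edge at controlled cost, and recover the loss because both hyperbolicity constants are multiples of $1/4$'' does not work: moving a corner by $1/2$ can decrease the sharp thin constant of the realizing triangle by $1/2$ or more, and divisibility by $1/4$ cannot absorb a loss of that size --- your argument only yields $\d(\L(G))\ge \d(G)-1$. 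The perturbation is in fact unnecessary: the realizing triangle from Theorem~\ref{t:BRS} is a cycle, so a corner at $v\in V(G)$ is incident to two distinct edges $e_1,e_2$ of the triangle and lifts to the midpoint of $v_{e_1}v_{e_2}\in E(\L(G))$; the lifted sides have the correct length, hence are geodesics since $\Psi$ is $1$-Lipschitz, and $1$-Lipschitzness also transfers the lower bound on the distance from the witness point to the opposite sides.

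The upper bound is where the actual content of the theorem lies, and there your proposal defers everything to ``the Morse lemma with tracking constant $2\d(G)+O(1)$ plus careful arithmetic.'' That is not a proof. There is no off-the-shelf stability statement giving tracking constant $2\d+O(1)$ for $(1,2)$-quasi-geodesics in a space satisfying the Rips condition with constant $\d$ (the generic bounds have the form $A(\d+\e)$ with a large or unspecified $A$). Moreover, even granting a tracking constant $D$ for the images of the three sides, your scheme gives $d_{\L(G)}(p,[xz]\cup[yz])\le 2D+\d(G)+2$, so to reach $5\d(G)+\tfrac52$ you must actually prove $D\le 2\d(G)+\tfrac14$ for the specific $2$-taut paths $\Psi([xy])$, $\Psi([xz])$, $\Psi([yz])$ --- and in both Hausdorff directions for the latter two. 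You correctly identify this as the hard part but do not resolve it; as it stands the proposal establishes only a qualitative bound $\d(\L(G))\le C\,\d(G)+C'$ with unspecified constants, not the stated inequality.
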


%
%

If we consider the four-point definition of hyperbolicity, another usual definition, and we denote by $\d'$ the sharp constant for this definition,
we have the following result in \cite[Theorem 6]{CoDu}.

\begin{theorem} \label{c:IneqLineGraphCoDu}
For any graph $G$ we have
$$
\d'(G) -1\le \d'(\L(G)) \le \d'(G) + 1.
$$
\end{theorem}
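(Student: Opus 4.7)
The plan is to prove the two inequalities separately, both by exploiting the tight metric relation between $G$ and $\L(G)$ provided by Lemma \ref{l:CRS}. Recall that the four-point constant $\d'(X)$ is the smallest $\d'\ge 0$ such that for every $x_1,x_2,x_3,x_4\in X$ the three sums
$$
S_1=d(x_1,x_2)+d(x_3,x_4),\qquad S_2=d(x_1,x_3)+d(x_2,x_4),\qquad S_3=d(x_1,x_4)+d(x_2,x_3)
$$
satisfy $S_{(1)}-S_{(2)}\le 2\d'$, with $S_{(1)}\ge S_{(2)}\ge S_{(3)}$ denoting the decreasing rearrangement.

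For the upper bound $\d'(\L(G))\le \d'(G)+1$, the natural first step is to note that by Lemma \ref{l:CRS} the map $h\colon V(\L(G))\to G$, $v_e\mapsto m_e$, isometrically embeds the vertex set $V(\L(G))$ (with the metric inherited from $\L(G)$) into $G$. Hence the four-point condition in $G$ passes to $V(\L(G))$ with the very same constant $\d'(G)$. To promote this to the full graph $\L(G)$, I would round four arbitrary points $y_1,\dots,y_4\in \L(G)$ to nearest vertices $v_1,\dots,v_4$ (at distance $\le 1/2$ each) and apply the vertex four-point inequality, controlling the resulting perturbation of the sums $S_k$ and the defect $S_{(1)}-S_{(2)}$.

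For the lower bound $\d'(G)\le \d'(\L(G))+1$, I would use the converse correspondence: for each non-isolated $v\in V(G)$ choose an incident edge $e_v$ and define $\phi(v)=v_{e_v}\in V(\L(G))$. Combining Lemma \ref{l:CRS} with the triangle inequality gives
$$
|d_G(u,w)-d_{\L(G)}(\phi(u),\phi(w))|\le 1 \qquad \text{for all } u,w\in V(G),
$$
so applying the four-point inequality in $\L(G)$ to $\phi(u_1),\dots,\phi(u_4)$ and transferring back to $G$ yields the four-point inequality for any four vertices of $G$ with an additive loss coming from this $(1,1)$-quasi-isometric distortion. A further rounding argument extends the inequality from $V(G)$ to arbitrary points of $G$.

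The main obstacle in both directions is obtaining the \emph{sharp} additive constant $1$ rather than the $2$ that a routine Lipschitz-perturbation yields: after replacing each $y_i$ (resp.\ each $u_i$) by a nearby vertex, each pairwise distance may shift by up to $1$, each sum $S_k$ by up to $2$, and hence $S_{(1)}-S_{(2)}$ by up to $4$, giving only $\d'(\L(G))\le \d'(G)+2$. To halve this loss one needs to use the combinatorial structure of $\L(G)$: adjacent vertices in $\L(G)$ correspond to $G$-edges sharing an endpoint, which provides coherent choices of which endpoint of each edge to round toward. Exploiting this extra degree of freedom in the rounding step is the delicate part of the argument, and presumably the technical core of \cite[Theorem~6]{CoDu}.
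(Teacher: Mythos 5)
First, note that the paper itself does not prove this statement: it is quoted verbatim from \cite[Theorem 6]{CoDu}, so there is no internal proof to compare your argument against. Judged on its own terms, your proposal has a genuine gap, and you in fact name it yourself: the routine perturbation argument you describe (each pairwise distance shifts by at most $1$, each sum $S_k$ by at most $2$, the defect $S_{(1)}-S_{(2)}$ by at most $4$) only yields $\d'(\L(G))\le \d'(G)+2$ and $\d'(G)\le \d'(\L(G))+2$. Since the constant $1$ \emph{is} the content of the theorem --- the weaker bound with $2$ is an immediate consequence of quasi-isometry invariance --- deferring that step to ``the technical core of \cite[Theorem 6]{CoDu}'' means the theorem is not proved. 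A secondary issue: $\d'$ in \cite{CoDu} is the four-point constant taken over \emph{vertices} only (the paper says so explicitly just after the theorem statement), so your rounding of arbitrary points of $\L(G)$ to nearby vertices is not needed; the difficulty is not continuous points but the distortion of the vertex-to-vertex correspondence.

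The missing idea is the exact distance formula in the line graph: for distinct edges $e=uv$ and $f=xy$ of $G$ one has $d_{\L(G)}(v_e,v_f)=1+\min\{d_G(a,b):a\in\{u,v\},\,b\in\{x,y\}\}$. Consequently every pairwise distance among four vertices of $\L(G)$ is a \emph{minimum over endpoint choices} of a $G$-distance, shifted by the same additive $+1$; in the differences $S_{(1)}-S_{(2)}$ these shifts cancel exactly, so one is left comparing sums of minima with sums of actual distances for a single coherent choice of endpoints. A case analysis on which endpoints realize the various minima (this is where the structure of $\L(G)$ enters, not a generic Lipschitz bound) shows the defect changes by at most $2$ rather than $4$, giving the additive constant $1$ in each direction. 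Without carrying out that cancellation-and-case analysis, your outline establishes only the trivial constant $2$.
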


The upper bound in Theorem \ref{c:IneqLineGraphCoDu} allows to improve the upper bound in Theorem \ref{c:IneqLineGraphk} to $\d(\L(G)) \le 3 \d(G) + c$ for some constant $c$
(unfortunately, the  constant $c$ is greater than $1$ since the four-point definition considers as points just the vertices of the graph, and it is necessary an additional constant in order to deal with continuous triangles).

\smallskip

In order to study the line of circular-arc graphs (see Theorem \ref{t:line} below) we need the following result about the line of chordal graphs which
is interesting by itself, and improves the upper bound $\d(\L(G)) \le 5 \d(G) + 5/2$ in Theorem \ref{c:IneqLineGraphk} (and even the bound $\d(\L(G)) \le 3 \d(G) + c$) for chordal graphs
(since $\d(G) \le 3/2$ for every chordal graph $G$, see \cite{BHB1}).

\begin{theorem} \label{t:chordal}
If $G$ is a chordal graph, then
$$
\d(\L(G))\le \frac52\, .
$$
\end{theorem}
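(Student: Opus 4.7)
The plan is to argue by contradiction. Suppose $\d(\L(G))>5/2$. Since $G$ is chordal we have $\d(G)\le 3/2$, so Theorem \ref{c:IneqLineGraphk} yields $\d(\L(G))\le 5\d(G)+5/2\le 10<\infty$, making $\L(G)$ hyperbolic. Theorem \ref{t:BRS} then forces $\d(\L(G))\in\frac14\ZZ$, so in fact $\d(\L(G))\ge 11/4$, and supplies a geodesic triangle $T=\{x,y,z\}\in\mathbb{T}_1$ (a simple cycle in $\L(G)$) with $\d(T)=\d(\L(G))\ge 11/4$, together with a point $p\in[xy]$ satisfying $d_{\L(G)}(p,[xz]\cup[yz])\ge 11/4$. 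Using $x\in[xz]$ and $y\in[yz]$, this forces $d(p,x),d(p,y)\ge 11/4$, hence $d(x,y)\ge 11/2$ and the cycle $T$ has integer length $L(T)\ge 11$.

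Next I would choose $v\in V(\L(G))\cap[xy]$ closest to $p$, so that $d(p,v)\le 1/2$. Half-integrality of distances between points of $J(\L(G))$ upgrades the estimate $d(v,y)\ge 11/4-1/2=9/4$ to $d(v,y)\ge 5/2$, and symmetrically $d(v,x)\ge 5/2$. I would then label the vertices of $\L(G)$ lying on $T$ cyclically as $u_1,u_2,\dots,u_n$ with $u_2=v$, with $u_3$ and $u_4$ the next two vertices along $[xy]$ toward $y$, and $u_1$ the neighbour of $v$ along $[xy]$ toward $x$. All four lie on the geodesic $[xy]$, so $d_{\L(G)}(u_1,u_4)=3$; since $n\ge 11\ge 6$, Lemma \ref{l:BHB2} applies and delivers some $i\in\{4,\dots,n\}$ together with $u\in V(\L(G))$ satisfying $uu_2,uu_3,uu_i,uu_{i+1}\in E(\L(G))$.

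The heart of the proof is then a short case analysis on where $u_i$ and $u_{i+1}$ sit on $T$. If $u_i$ lies on $[xy]$ at geodesic distance $\ge 3$ from $u_2$, the length-$2$ path $u_2-u-u_i$ contradicts the geodesicity of $[xy]$; this rules out $i\ge 5$ whenever $u_i$ is still on the $y$-branch of $[xy]$, and also disposes of the wrap-around case $i=n$ via $d_{\L(G)}(u_3,u_n)=3$ versus the length-$2$ path $u_3-u-u_n$. Every surviving configuration forces $u$ to be adjacent to some vertex $w\in V(\L(G))\cap([xz]\cup[yz])$: either $w=u_5\in[yz]$ in the boundary case $i=4$ with $d(v,y)=5/2$ (so the cycle transitions from $[xy]$ to $[yz]$ precisely at the edge $u_4u_5$), or $w=u_i$ when $i$ falls in the middle range of indices corresponding to $[yz]\cup[xz]$. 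The contradiction then comes from the triangle inequality
$$d(p,w)\le d(p,u_2)+d(u_2,u)+d(u,w)\le \tfrac12+1+1=\tfrac52<\tfrac{11}{4},$$
which violates $d_{\L(G)}(p,[xz]\cup[yz])\ge 11/4$. The main obstacle will be executing this case analysis rigorously at the extreme indices $i=4$ and $i=n$, where one must carefully combine the parity/half-integrality of distances to $x$ and $y$ with the bounds $d(v,x),d(v,y)\ge 5/2$ to ensure that Lemma \ref{l:BHB2} really furnishes a chord reaching the opposite pair of sides rather than merely an internal shortcut within $[xy]$.
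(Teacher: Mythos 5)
Your argument is correct and is essentially the paper's own proof: both reduce to a cycle triangle via Theorem \ref{t:BRS}, take four consecutive vertices $u_1,u_2,u_3,u_4$ of $[xy]$ near $p$ with $d_{\L(G)}(u_1,u_4)=3$, invoke Lemma \ref{l:BHB2} to produce the vertex $u$, show that one of $u_i,u_{i+1}$ must lie on $[xz]\cup[yz]$ (since otherwise the length-$2$ path through $u$ would contradict geodesicity of $[xy]$), and conclude with the estimate $\frac12+1+1=\frac52$. The paper phrases this directly for an arbitrary $p$ with $d_{\L(G)}(p,[xz]\cup[yz])\ge 2$ and packages your case analysis as the observation that some pair $u_j\in\{u_2,u_3\}$, $u_k\in\{u_i,u_{i+1}\}$ satisfies $d_T(u_j,u_k)\ge 3>2$, but these are only cosmetic differences from your contradiction setup.
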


\begin{proof}
Let us consider a geodesic triangle $T=\{x,y,z\}$ in $\L(G)$ and $p\in [xy]$.
By Theorem \ref{t:BRS}, we can assume that $T$ is a cycle.

We are going to prove $d_{\L(G)}(p,[xz]\cup[yz]) \le 3$.
Without loss of generality we can assume that $d_{\L(G)}(p,[xz]\cup[yz]) \ge 2$.
If we denote the vertices of the cycle $T$ by $u_1, u_2,\dots,u_n, u_1,$ then we can assume that $p \in u_2u_3$.
Since $d_{\L(G)}(p,\{x,y\}) \ge d_{\L(G)}(p,[xz]\cup[yz]) \ge 2$, we have $u_1u_2 \cup u_2u_3 \cup u_3u_4 \subset [xy]$
and so, $d_{\L(G)}(u_1,u_4) = 3$.
By Lemma \ref{l:BHB2}, there exist $j \in \{2,3\}$, $k \in \{4, 5,\dots ,n\}$ and $u\in V(\L(G))$ such that $u_2u,u_3u,u_{k}u \in E(\L(G))$ and $d_T(u_j,u_{k})\ge 3$.
Since $u_j \in [xy]$, $d_{\L(G)}(u_j,u_{k})=2$ and $d_T(u_j,u_{k})\ge 3$, we have
$u_k \in [xz]\cup[yz]$ and
$$
d_{\L(G)}(p,[xz]\cup[yz])
\le d_{\L(G)}(p,u_k)
\le d_{\L(G)}(p,\{u_2,u_3\}) + d_{\L(G)}(\{u_2,u_3\},u) + d_{\L(G)}(u,u_k)
\le \frac52\, .
$$
Hence, $\d(\L(G)) \le 5/2$.
\end{proof}

The following result improves the upper bound in Theorem \ref{c:IneqLineGraphk} for circular-arc graphs.

\begin{theorem} \label{t:line}
Let $G$ be a circular-arc graph.
If $\varrho(G) \ge 3$, then
$$
\frac14\, \varrho(G) \le \d(\L(G)) \le \frac12 \Big\lfloor \frac12\, \varrho(G) \Big\rfloor + \, \frac52\, .
$$
If $\varrho(G) = 0,2,$ then
$$
0 \le \d(\L(G))\le \frac52\, .
$$
If $\varrho(G) = 1$, then
$$
0 \le \d(\L(G))\le 2 .
$$
\end{theorem}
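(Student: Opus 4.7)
The plan is to bound $\diam \L(G)$ in terms of $\varrho(G)$ and then apply Lemma \ref{dddd}, dealing with the four ranges $\varrho(G)=0$, $\varrho(G)=1$, $\varrho(G)=2$, $\varrho(G)\ge 3$ separately. The lower bound $\varrho(G)/4 \le \d(\L(G))$ when $\varrho(G)\ge 3$ is immediate from Theorem \ref{t:main} (which gives $\d(G)\ge \varrho(G)/4$) combined with the inequality $\d(G)\le \d(\L(G))$ of Theorem \ref{c:IneqLineGraphk}; the trivial bound $0\le \d(\L(G))$ covers the remaining cases.

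For the low-$\varrho$ upper bounds, each case exploits a distinct structural feature. When $\varrho(G)=0$, the graph $G$ is an interval graph, hence chordal, and Theorem \ref{t:chordal} yields $\d(\L(G))\le 5/2$ directly. When $\varrho(G)=1$, the vertex $v_1$ whose arc is the whole $\SS$ is adjacent to every other vertex of $G$; using edges incident to $v_1$ as stepping stones shows that any two edges of $G$ are joined in $\L(G)$ by a path of length at most $3$, yielding $\diam V(\L(G))\le 3$, $\diam \L(G)\le 4$, and thus $\d(\L(G))\le 2$ by Lemma \ref{dddd}. When $\varrho(G)=2$, the two covering arcs $I_1,I_2$ must overlap (neither equals $\SS$, so two proper arcs covering $\SS$ cannot be disjoint), giving $v_1v_2\in E(G)$; since each other arc meets $I_1$ or $I_2$, every edge of $G$ is within $\L(G)$-distance $2$ of $v_{v_1v_2}$, so $\diam V(\L(G))\le 4$, $\diam \L(G)\le 5$, and $\d(\L(G))\le 5/2$.

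For the main case $\varrho(G)\ge 3$, fix a minimum total set $K=\{v_1,\ldots,v_{\varrho(G)}\}$; as in the proof of Theorem \ref{t:main}, $K$ induces a cycle of length $\varrho(G)$ in $G$, so the backbone edges $\{v_jv_{j+1}\}_{j=1}^{\varrho(G)}$ (indices mod $\varrho(G)$) form a cycle of length $\varrho(G)$ in $\L(G)$, and in particular the $\L(G)$-distance between any two of these backbone vertices is at most $\lfloor \varrho(G)/2\rfloor$. Every arc corresponding to a vertex of $G$ intersects some $I_j$, so every edge $e$ of $G$ has an endpoint either equal to, or adjacent to, some $v_j$, which places $v_e$ within $\L(G)$-distance $2$ of $v_{v_jv_{j+1}}$. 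Chaining through the backbone yields $\diam V(\L(G))\le \lfloor \varrho(G)/2\rfloor + 4$, whence $\diam \L(G)\le \lfloor \varrho(G)/2\rfloor + 5$, and Lemma \ref{dddd} delivers the stated upper bound. I expect the main obstacle to be the careful verification of this two-step ``backbone reduction'': one has to split on whether an endpoint equals or is merely adjacent to some $v_j$, and to check that when one chains through the backbone cycle one loses only $\lfloor \varrho(G)/2\rfloor$ rather than something larger coming from the fact that the extra non-backbone edges at each $v_j$ need not give shortcuts in the right direction.
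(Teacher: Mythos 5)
Your proposal is correct and follows essentially the same route as the paper: lower bounds from Theorem \ref{t:main} combined with $\d(G)\le\d(\L(G))$, the chordality of interval graphs plus Theorem \ref{t:chordal} for $\varrho(G)=0$, and for $\varrho(G)\ge 1$ a diameter bound on $\L(G)$ obtained by projecting every edge of $G$ to within $\L(G)$-distance $2$ of the backbone cycle $C'$ (respectively the single vertex $v_{v_1v_2}$, respectively the clique of edges at $v_1$), followed by Lemma \ref{dddd}. The case analysis and the resulting diameter estimates $\lfloor\varrho(G)/2\rfloor+5$, $5$, and $4$ coincide with those in the paper's proof.
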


\begin{proof}
Theorems \ref{t:main} and \ref{c:IneqLineGraphk} give the lower bounds.

If $\varrho(G)=0$, then Theorem \ref{t:chordal} gives the upper bound, since every interval graph is chordal.

Assume that $\varrho(G)>0$ and let us prove the upper bounds of $\d(\L(G))$.
Fix any set of vertices $K=\{v_1,\dots,v_{\varrho(G)}\}$ and corresponding arcs $\{I_1,\dots,I_{\varrho(G)}\}$ with $I_1\cup \cdots \cup I_{\varrho(G)} = \SS$.

Assume first $\varrho(G) \ge 3$, and denote by $C$ the cycle in $G$ with $V(C)=K$ and by $C'$ the corresponding cycle in $\L(G)$ to $C$.
Since every vertex in $G$ is at distance at most $1$ from $C$, every vertex in $\L(G)$ is at distance at most $2$ from $C'$, and
$$
\begin{aligned}
\diam V(\L(G))
& \le 2  + \diam V(C') + 2 = \Big\lfloor \frac12\, \varrho(G) \Big\rfloor + 4,
\\
\diam \L(G)
& \le \Big\lfloor \frac12\, \varrho(G) \Big\rfloor + 5,
\end{aligned}
$$
and Lemma \ref{dddd} gives the upper bound.

If $\varrho(G) = 2$, then the previous argument gives the desired upper bound, by taking $v_{v_1v_2}$ (with diameter zero) instead of $C'$.

If $\varrho(G) = 1$, then every vertex in $G$ is a neighbor of $v_1$ and the set of edges in $G$ incident on $v_1$ corresponds with a complete graph in $\L(G)$.
Hence, $\diam V(\L(G)) \le 3$, $\diam \L(G) \le 4$ and $\d(\L(G)) \le 2$.
\end{proof}

\end{document}